\newtheorem{theorem}{Theorem}[section]
\newtheorem{lemma}[theorem]{Lemma}
\newtheorem{proposition}[theorem]{Proposition}
\theoremstyle{definition}
\newtheorem{definition}[theorem]{Definition}
\theoremstyle{remark}
\newtheorem{remark}[theorem]{Remark}
\newcommand{\norm}[1]{\left\lVert#1\right\rVert}
\DeclarePairedDelimiter{\ceil}{\lceil}{\rceil}
\DeclarePairedDelimiter{\floor}{\lfloor}{\rfloor}
\begin{document}

\title{Generalized Fej\'er monotone sequences and their finitary content}

\author{Nicholas Pischke}
\date{\today}
\maketitle
\vspace*{-5mm}
\begin{center}
{\scriptsize Department of Mathematics, Technische Universit\"at Darmstadt,\\
Schlossgartenstra\ss{}e 7, 64289 Darmstadt, Germany, \ \\ 
E-mail: pischke@mathematik.tu-darmstadt.de}
\end{center}

\maketitle
\begin{abstract}
We provide quantitative and abstract strong convergence results for sequences from a compact metric space satisfying a certain form of \emph{generalized Fej\'er monotonicity} where (1) the metric can be replaced by a much more general type of function measuring distances (including, in particular, certain Bregman distances), (2) full Fej\'er monotonicity is relaxed to a partial variant and (3) the distance functions are allowed to vary along the iteration. For such sequences, the paper provides explicit and effective rates of metastability and even rates of convergence, the latter under a regularity assumption that generalizes the notion of metric regularity introduced by Kohlenbach, L\'opez-Acedo and Nicolae, itself an abstract generalization of many regularity notions from the literature. 

In the second part of the paper, we apply the abstract quantitative results established in the first part to two algorithms: one algorithm for approximating zeros of maximally monotone and maximally $\rho$-comonotone operators in Hilbert spaces (in the sense of Combettes and Pennanen as well as Bauschke, Moursi and Wang) that incorporates inertia terms \emph{every other} term and another algorithm for approximating zeros of monotone operators in Banach spaces (in the sense of Browder) that is only Fej\'er monotone w.r.t.\ a certain Bregman distance.
\end{abstract}
\noindent
{\bf Keywords:} Fej\'er monotonicity; metastability; rates of convergence; proof mining; Bregman distance\\ 
{\bf MSC2020 Classification:} 47J25, 47H05, 03F10, 47H09

\section{Introduction}

This paper is about quantitative information on convergence results from analysis and optimization where the sequence in question satisfies a particular form of monotonicity, generalizing the well-known Fej\'er monotonicity (see \cite{BC2001,Com2009,VE2009} and see the next section for precise definitions).\footnote{Fej\'er monotonicity was introduced in \cite{MS1954}, named after the use made of a similar concept by L. Fej\'er \cite{Fej1922}.} As discussed e.g.\ in \cite{Com2009,VE2009}, many sequences studied in nonlinear analysis and optimization are Fej\'er or quasi-Fej\'er monotone (in the sense of \cite{Com2001}) and very general convergence theorems are available for these classes which classify additional properties required for strong or weak convergence (see \cite{Bro1967} for such results already implicitly and in particular see \cite{BC2001,Com2009}). Consequently, in the last decade(s), Fej\'er monotonicity has become a unifying concept for a wide range of convergence results in those areas. Particular examples include the generic example of Picard iterations for nonexpansive mappings but also the proximal point algorithm (stemming from \cite{Mar1970,Roc1976}), Mann iterations \cite{Man1953} as well as Ishikawa iterations \cite{Ish1974} for generalized nonexpansive mappings (see \cite{GFLFS2011}) and range to  involved examples like, e.g., algorithms approximating equilibrium points \cite{IY2009} or algorithms for solving problems in DC programming \cite{Mou2015}, and beyond.

\medskip

Even with this breadth of notions and schemes encompassed by the concept of Fej\'er monotonicity, there are many influential iteration schemes which, although related to the concept in principle, are not truly Fej\'er monotone and thus can not be covered by the previous study thereof. 

For one, there are iterations which are only partially Fej\'er in the sense that only a subsequence is properly Fej\'er monotone and the other elements connect to this subsequence just via weaker monotonicity requirements. Such a situation in particular arises in the context of ``alternating methods'', i.e.\ algorithms which alternate between two (or more) ``schemas'', e.g.\ like in recent modifications of the proximal point algorithm \cite{IH2019,MP2015} which incorporate inertia terms \emph{every other} term.

For another, iterations can be Fej\'er monotone w.r.t.\ functions measuring the distance of points other than metrics. The most prominent and instructive example for this case are Bregman distances as derived from Bregman's seminal work \cite{Bre1967} or even generalized Bregman distances in the sense of Burachik, Dao and Lindstrom \cite{BDL2021}. From that perspective, the theory of sequences which are Fej\'er monotone w.r.t.\ Bregman distances has been recognized, in a similar way as ordinary Fej\'er monotone sequences which are defined w.r.t.\ a norm or metric, as a powerful framework for unifying many convergence results in nonlinear analysis (see in particular the seminal work by Bauschke, Borwein and Combettes \cite{BBC2003}) and applications of this framework include many prominent iterative schemes in the context of Bregman distances (see in particular the references in \cite{BBC2003}).

Lastly, in particular in the recent years, methods have been studied where instead of solving subproblems by e.g.\ minimizing some sub-objective function relative to a fixed distance, these distances are allowed to vary along the iteration so that these subproblems can be tweaked adaptively. Particular examples for such iterations are the recent works on variable-metric forward-backward type methods \cite{BLPP2016,CPR2014,CB2014} and in particular similar methods that allow for varying Bregman distances \cite{BC2021,Ngu2017}, among many more. The treatment of all these methods relies on establishing some form of Fej\'er monotonicity where the distances in question are allowed to vary along the iteration and for the particular case of varying metrics induced by bilinear forms, an abstract treatment of such types of methods has been given in \cite{CB2013}.

\medskip

In this paper, we will study a generalized form of Fej\'er monotonicity which unifies all of these variants. Concretely, our particular emphasis will be on sequences where the property of Fej\'er monotonicity is relaxed in the following three ways:

\begin{enumerate}
\item We allow for a very general class of functions (in particular including metrics as well as large classes of Bregman distances) to measure distances between points in the (quasi-)Fej\'er monotonicity property.
\item We simultaneously allow that the sequence only fulfills a partial form of Fej\'er monotonicity where one subsequence retains a generalized form of (quasi-)Fej\'er monotonicity while the other sequence is only required to fulfill a very relaxed form of monotonicity which generalizes the conditions on the complementary subsequence currently in place in the literature on such methods.
\item We allow the general distances to vary along the (partial) Fej\'er monotonicity property, capturing the multiple notions of varying distances currently discussed in the literature.
\end{enumerate}

To that end, we give abstract and unified strong convergence results for such sequences over compact (or relatively compact and thus in particular finite-dimensional) spaces which are moreover quantitative in the sense that, depending only on few quantitative reformulations of the key properties of the sequence, they provide a construction for an effective and highly uniform rate of metastability in the sense of T. Tao \cite{Tao2008b,Tao2008a} and even a rate of convergence under an additional regularity assumption introduced here\footnote{The dependence on such an additional assumption is indeed necessary if one wants to obtain computable rates of convergence. This follows from adaptations (see \cite{Neu2015}) of a classical result in computable analysis by Specker \cite{Spe1949} which, in general, rule out effective rates of convergence for many common iteration schemes, even in the context where all parameters of the sequence like potential mappings, etc., are effective (see the discussion in \cite{Neu2015}).} that generalizes the notion of metric regularity introduced in \cite{KLAN2019}.\footnote{This notion of regularity introduced in \cite{KLAN2019} provides a unified approach to several regularity notions known from optimization such as error bounds and metric subregularity, among others (see the discussion in \cite{KLAN2019}).} These abstract and quantitative results arise as generalizations of previous constructions of rates of metastability and rates of convergence for Fej\'er and quasi-Fej\'er monotone sequences obtained in \cite{KLN2018,KLAN2019} (see also \cite{Pis2022}). Moreover, by ``reversing'' the process and forgetting about the quantitative aspects, we are then able to derive a new and very general ``non-quantitative'' convergence result for these generalized Fej\'er monotone sequences in the previous three senses which in particular generalizes the classical results on Fej\'er and quasi-Fej\'er monotone sequences as well as the general convergence results on Bregman monotone sequences obtained in \cite{BBC2003} and on variable metric Fej\'er monotone sequences obtained in \cite{CB2013} (at least in relatively compact spaces). Not only does this treatment therefore unify all of these different types of Fej\'er monotonicity but it also can accommodate new methods that would not have been covered by either of these various strands of notions of Fej\'er monotonicity individually (as also commented on further below).

\medskip

Similar to the circumstances of the quantitative results on Fej\'er monotone sequences from \cite{KLN2018,KLAN2019}, the present work has been obtained through the general methodological approach of proof mining, a program in mathematical logic conceptually originating from Kreisel's unwinding of proofs \cite{Kre1951,Kre1952} and brought to maturity by U. Kohlenbach and his collaborators, which aims to classify and extract the computational content of prima facie ``non-computational'' proofs (see \cite{Koh2008} for a comprehensive monograph on the subject, \cite{KO2003} for an early survey and \cite{Koh2019} for a survey of recent results up to 2019).

Proof mining has been extensively applied in the context of nonlinear (functional) analysis and optimization to derive quantitative results in numerous circumstances. Particularly relevant for the present paper are, e.g., the quantitative treatments of equilibrium problems \cite{PK2021}, zeros of differences of monotone operators \cite{Pis2022}, the proximal point algorithm in various settings \cite{Koh2020,Koh2021,Koh2021b,LS2018a,LS2018b} and the  composition of two firmly nonexpansive mappings in nonlinear spaces \cite{KLAN2017} as well as Korpelevich's extragradient method \cite{Pis2023} as those applications were obtained using the previously mentioned abstract results on Fej\'er monotone sequences \cite{KLN2018,KLAN2019}.

However, as common in the proof mining program, we want to emphasize that while this logic-based approach was crucial in the course of obtaining the present results, there is no explicit use of logical tools in the derivation of our results presented here and all relevant logical particularities are discussed, if at all, only in separate remarks.

\medskip

We expect that the abstract results established here will exhibit a similar range of applications in the future. To that end, the second half of the paper is devoted to two particular exemplary case studies. Here, we provide quantitative results on the previously mentioned alternating inertia proximal point algorithm \cite{MP2015} and its generalization given in \cite{IH2019} to averaged mappings (or, respectively, comonotone operators in the sense of \cite{BMW2020}) as well as on a modification of the well-known proximal point algorithm due to \cite{KKT2004} (see also the related work \cite{MT2004}) to monotone operators in Banach spaces (in the sense of Browder, see in particular \cite{Bro1968}) which also modifies the iteration to be of a style similar to that of Mann's iteration (see also \cite{Xu2002} for similar modifications in Hilbert spaces).

\medskip

As a further indication of the applicability of the results presented here, we however want to additionally mention that beyond the previously discussed applications, the methods seem to be immediately further applicable in the context of the recent works on proximal gradient algorithms \cite{IM2018}, on a modification of Korpelevich's extragradient algorithm for variational inequalities (see \cite{Kor1976}) to incorporate alternating inertia \cite{SI2020}, on extragradient methods for equilibrium problems \cite{SDLY2021}, on algorithms for the split feasibility problem \cite{LWW2022,SG2021} or on modifications of the Krasnosel'ski\u{i}-Mann iteration to incorporate alternating inertia \cite{DCHPR2022} as well as on algorithms using the Bregman distance in the proximal point style \cite{BuIu1997,Eck1993}, on Landweber's iteration in Banach spaces \cite{SLS2006} and potential extensions to Bregman distances, on iterations of Bregman retractions \cite{BC2003}, on iterations of Bregman projections \cite{AB1997}, on algorithms for solving the variational inequality problem in Banach spaces using the Bregman distance \cite{BI1998,BS2001} or on algorithms which are only quasi-Fej\'er w.r.t.\ Bregman distances like iterations of inexact orbits \cite{BRZ2006}. Further, also the previously mentioned works on variable-metric forward-backward type methods \cite{BLPP2016,CPR2014,CB2014} and on varying Bregman distances \cite{BC2021,Ngu2017} seem to immediately fit in the framework proposed here and could thus be quantitatively treated.

\medskip

Lastly, we want to note that the general approach developed here is crucially used in the recent work \cite{Pis2024a} to study a new notion of Bregman distance in metric spaces where, in that context, the generality of the results presented here was crucial for obtaining corresponding convergence results of sequences which are Fej\'er monotone w.r.t.\ this metric variant of the Bregman distance as it was neither covered by previous work on Bregman monotone iterations, being set in normed spaces, nor on Fej\'er monotone sequences in metric spaces, as this new Bregman distance in metric spaces is not a metric itself.

\section{Preliminaries: convergence theorems for (quasi-)Fej\'er monotone sequences}

Throughout, if not stated otherwise, we consider an underlying metric space $(X,d)$. In this section, we now recap the main results on the usual (quasi-)Fej\'er monotone sequences underlying this work. In that way, we mainly exposit the work \cite{KLN2018}, being closest in spirit to this paper. Therefore, we mostly follow the notational setup of \cite{KLN2018} if not stated otherwise. In particular, we write $\mathbb{R}_+$ for the interval $[0,\infty)$ and we write $\mathbb{R}^*_+$ for the interval $(0,\infty)$. Further, we use $\overline{B}\left(p,r\right)$ to denote the closed ball around $p$ with radius $r$ in $(X,d)$. Also, we write $n\dotdiv m=\max\{0,n-m\}$ for $n,m\in\mathbb{N}$.

\medskip

The central notion investigated by the authors in \cite{KLN2018} is the following form of Fej\'er monotonicity relative to the metric:

\begin{definition}[\cite{KLN2018}]
Given two functions $G,H:\mathbb{R}_+\to\mathbb{R}_+$ and a non-empty subset $F\subseteq X$, a sequence $(x_n)\subseteq X$ is called \emph{$(G,H)$-Fej\'er monotone w.r.t.\ $F$} if for all $n,m\in\mathbb{N}$ and all $p\in F$:
\[
H(d(p,x_{n+m}))\leq G(d(p,x_n)).
\]
\end{definition}

Following \cite{KLN2018}, the functions $G$ and $H$ are later assumed to satisfy the following continuity properties: we assume that $G$ satisfies that
\[
a_n\to 0\text{ implies } G(a_n)\to 0
\]
and that $H$ satisfies that
\[
H(a_n)\to 0\text{ implies } a_n\to 0,
\]
both for any $(a_n)\subseteq\mathbb{R}_+$. Those properties are equivalent, respectively, to the existence of corresponding moduli $\alpha_G,\beta_H:\mathbb{N}\to\mathbb{N}$ which satisfy
\[
a\leq\frac{1}{\alpha_G(k)+1}\rightarrow G(a)\leq\frac{1}{k+1}
\]
as well as
\[
H(a)\leq\frac{1}{\beta_H(k)+1}\rightarrow a\leq\frac{1}{k+1}
\]
for any $k\in\mathbb{N}$ and any $a\in\mathbb{R}_+$. Following \cite{KLN2018}, we call $\alpha_G$ a \emph{$G$-modulus for $G$} and $\beta_H$ an \emph{$H$-modulus for $H$}.

\medskip

For the set $F$, we assume (following \cite{KLN2018}) a representation via
\[
F=\bigcap_{k\in\mathbb{N}}AF_k
\]
where the sequence of the sets $AF_k\subseteq X$ is monotone decreasing, i.e.\ $AF_{k}\supseteq AF_{k+1}$. The elements of $AF_k$ are to be intuitively understood as being $k$-good approximations of points in $F$ but can take arbitrary forms is our general setting.

In that context, we will need to assume that the set $F$ is not only closed but closed in a special sense relative to the approximations $AF_k$:

\begin{definition}[\cite{KLN2018}]
The set $F$ is \emph{explicitly closed} w.r.t.\ $AF_k$ if for all $p\in X$:
\[
\forall N,M\in\mathbb{N}\left( AF_M\cap\overline{B}\left(p,\frac{1}{N+1}\right)\neq\emptyset\right)\rightarrow p\in F.
\]
\end{definition}

As is well known from the context of usual Fej\'er monotone sequences (see \cite{KLN2018}), obtaining a convergence result for such sequences requires an additional assumption in the form of an ``approximation property'', which here takes of the form of the following definition: 

\begin{definition}[\cite{KLN2018}]
We say that a sequence $(x_n)\subseteq X$ has \emph{approximate $F$-points \textnormal{(}\emph{w.r.t.} $(AF_k)$\textnormal{)}} if
\[
\forall k\in\mathbb{N}\exists N\in\mathbb{N}(x_N\in AF_k).
\]
\end{definition}

The main result on Fej\'er monotone sequences in compact spaces is now the following proposition (taken in this form from \cite{KLN2018}):

\begin{proposition}\label{pro:convResNormal}
Let $(X,d)$ be compact and $F\subseteq X$ be explicitly closed. If $(x_n)\subseteq X$ is $(G,H)$-Fej\'er monotone w.r.t.\ $F$ and has approximate $F$-points \textnormal{(}w.r.t.\ $(AF_k)$\textnormal{)}, then $(x_n)$ converges to some $x^*\in F$.
\end{proposition}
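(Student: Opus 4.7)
The plan is to combine the approximate $F$-points property with compactness to produce a candidate limit $x^* \in F$, and then use the $(G,H)$-Fej\'er property together with the moduli $\alpha_G,\beta_H$ to push convergence from a subsequence to the full sequence.

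First I would extract the candidate limit. By the approximate $F$-points assumption, for every $k\in\mathbb{N}$ there exists $N_k$ with $x_{N_k}\in AF_k$. By compactness of $(X,d)$, the sequence $(x_{N_k})$ admits a subsequence $(x_{N_{k_j}})_j$ converging to some $x^*\in X$. Using the explicit closedness of $F$, I would verify $x^*\in F$: fix $N,M\in\mathbb{N}$; since $k_j\to\infty$ and $x_{N_{k_j}}\to x^*$, pick $j$ large enough so that both $k_j\geq M$ and $d(x^*,x_{N_{k_j}})\leq \frac{1}{N+1}$. Then by monotonicity of $(AF_k)$, $x_{N_{k_j}}\in AF_{k_j}\subseteq AF_M$, so $AF_M\cap \overline{B}(x^*,\frac{1}{N+1})\neq\emptyset$, and explicit closedness gives $x^*\in F$.

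Next I would promote the subsequential convergence to full convergence using the Fej\'er inequality with $p=x^*\in F$. Given $\varepsilon>0$, choose $k\in\mathbb{N}$ with $\frac{1}{k+1}<\varepsilon$. The strategy is to select an index $n_0$ at which $x_{n_0}$ is already so close to $x^*$ that the Fej\'er inequality forces all later terms to remain $\varepsilon$-close. Concretely, using $x_{N_{k_j}}\to x^*$, pick $n_0$ of the form $N_{k_j}$ satisfying
\[
d(x^*,x_{n_0})\leq \frac{1}{\alpha_G(\beta_H(k))+1}.
\]
By the $G$-modulus property, $G(d(x^*,x_{n_0}))\leq \frac{1}{\beta_H(k)+1}$. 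For any $m\in\mathbb{N}$, $(G,H)$-Fej\'er monotonicity of $(x_n)$ at $p=x^*$ yields
\[
H(d(x^*,x_{n_0+m}))\leq G(d(x^*,x_{n_0}))\leq \frac{1}{\beta_H(k)+1},
\]
and the $H$-modulus property then gives $d(x^*,x_{n_0+m})\leq \frac{1}{k+1}<\varepsilon$. Hence $x_n\to x^*$.

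The main conceptual step is recognizing that explicit closedness is exactly the right closure notion to conclude $x^*\in F$ from a subsequence $x_{N_{k_j}}$ converging to $x^*$ with $x_{N_{k_j}}\in AF_{k_j}$; this replaces the usual closedness-under-limits argument in the classical Fej\'er setting. The remainder of the argument is essentially the standard ``one-point upgrade from subsequential to full convergence'' for Fej\'er-type sequences, but carried out carefully via the composition $\alpha_G\circ\beta_H$ of the moduli rather than plain continuity, since $G$ and $H$ need not be the identity. No further obstacles are expected, as the compactness of $X$ makes the subsequence extraction immediate and all other ingredients are encoded in the hypotheses.
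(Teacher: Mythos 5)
Your proof is correct. Note that the paper itself does not prove this proposition -- it cites it verbatim from \cite{KLN2018} as background -- so there is no in-paper proof to compare against. Your argument is the standard self-contained proof: compactness together with the approximate $F$-point property gives a subsequential limit $x^*$; explicit closedness (combined with the nestedness $AF_{k+1}\subseteq AF_k$) certifies $x^*\in F$; and then applying the Fej\'er inequality at $p=x^*$ with the composed moduli $\alpha_G\circ\beta_H$ (equivalently, with the raw continuity hypotheses on $G$ and $H$) upgrades the subsequential convergence along $(x_{N_{k_j}})$ to convergence of the full sequence, since once $d(x^*,x_{n_0})\leq 1/(\alpha_G(\beta_H(k))+1)$ one gets $d(x^*,x_{n_0+m})\leq 1/(k+1)$ for all $m\geq 0$. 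This is precisely the ``one good approximant plus Fej\'er monotonicity traps the tail'' mechanism that the quantitative theorems in the paper later make explicit, and your proof also correctly isolates explicit closedness as the exact closure notion needed. No gaps.
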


In many situations, Fej\'er monotonicity is considered in the context of an additional sequence of summable errors in the sense of the following definition, which is then called quasi-Fej\'er monotonicity:

\begin{definition}[\cite{KLN2018}]
Given two functions $G,H:\mathbb{R}_+\to\mathbb{R}_+$, a non-empty subset $F\subseteq X$ and a sequence $(\varepsilon_n)\subseteq\mathbb{R}_+$ with $\sum\varepsilon_n<\infty$, a sequence $(x_n)\subseteq X$ is called \emph{$(G,H)$-quasi-Fej\'er monotone w.r.t.\ $F$ and $(\varepsilon_n)$} if for all $n,m\in\mathbb{N}$ and all $p\in F$:
\[
H(d(p,x_{n+m}))\leq G(d(p,x_n))+\sum_{i=n}^{n+m-1}\varepsilon_i.
\]
\end{definition}

In the context of this extended notion, the assumption of the sequence having approximate $F$-points has to be upgraded to a $\liminf$-like property in the sense of the following definition:

\begin{definition}[\cite{KLN2018}]
We say that a sequence $(x_n)\subseteq X$ has the \emph{$\liminf$-property w.r.t.\ $F$ \textnormal{(}and $(AF_k)$\textnormal{)}} if
\[
\forall k,n\in\mathbb{N}\exists N\in\mathbb{N}(N\geq n\text{ and } x_N\in AF_k).
\]
\end{definition}

Then one obtains the following main convergence result for quasi-Fej\'er monotone sequences in compact spaces (again taken in this form from \cite{KLN2018}):

\begin{proposition}\label{pro:convRes}
Let $(X,d)$ be compact and $F\subseteq X$ be explicitly closed. If $(x_n)\subseteq X$ is $(G,H)$-quasi-Fej\'er monotone w.r.t.\ $F$ and $(\varepsilon_n)$ and has the $\liminf$-property w.r.t.\ $F$ \textnormal{(}and $(AF_k)$\textnormal{)}, then $(x_n)$ converges to some $x^*\in F$.
\end{proposition}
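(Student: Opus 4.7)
The plan is to adapt the proof of Proposition~\ref{pro:convResNormal} to accommodate the summable error terms. First, I would use the $\liminf$-property together with compactness of $(X,d)$ to produce a suitable limit point: recursively choose strictly increasing indices $N_0<N_1<\ldots$ with $x_{N_k}\in AF_k$, then extract a further convergent subsequence $x_{N_{k_j}}\to x^*$ via compactness.

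Next, I would show $x^*\in F$ using explicit closedness. Given $M,N\in\mathbb{N}$, picking $j$ large enough that $k_j\geq M$ and $d(x_{N_{k_j}},x^*)\leq 1/(N+1)$ places $x_{N_{k_j}}$ into $AF_{k_j}\subseteq AF_M$ (by monotonicity of $(AF_k)$) and into $\overline{B}(x^*,1/(N+1))$, so the intersection is nonempty and explicit closedness yields $x^*\in F$.

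To upgrade subsequential convergence to convergence of the full sequence, I would apply the quasi-Fej\'er inequality at $p=x^*$. Fixing a target $k\in\mathbb{N}$, one splits the desired bound $1/(\beta_H(k)+1)$ into two halves: summability of $(\varepsilon_n)$ makes the tail $\sum_{i\geq J}\varepsilon_i$ arbitrarily small from some $J$ onward, while the subsequential convergence $x_{N_{k_j}}\to x^*$, fed through the $G$-modulus $\alpha_G$, makes $G(d(x^*,x_{N_{k_j}}))$ arbitrarily small for $j$ large. Choosing $N_{k_j}\geq J$ satisfying both, the quasi-Fej\'er inequality at $n=N_{k_j}$ and arbitrary $m\geq 0$ bounds $H(d(x^*,x_{N_{k_j}+m}))$ by $1/(\beta_H(k)+1)$, and the $H$-modulus $\beta_H$ converts this to $d(x^*,x_{N_{k_j}+m})\leq 1/(k+1)$, which gives convergence of the tail and hence of the whole sequence to $x^*$.

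The main obstacle is precisely this coordinated modulus bookkeeping in the last step: one cannot appeal directly to continuity of $G$ and $H$, only to the one-sided implications provided by $\alpha_G$ and $\beta_H$, and these must be combined with the error tail bound so that \emph{both} summands on the right-hand side of the quasi-Fej\'er inequality are simultaneously controlled before $\beta_H$ can be invoked. Once this is in place, the argument is a direct adaptation of the Fej\'er case.
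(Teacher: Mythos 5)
Your proof is correct. The paper itself does not prove Proposition~\ref{pro:convRes} but quotes it directly from \cite{KLN2018}, so there is no in-text proof to compare against; nevertheless your argument is exactly the standard, direct one that the cited result rests on. The three stages are all sound: (i) the $\liminf$-property guarantees arbitrarily late indices in each $AF_k$, so a strictly increasing sequence $N_k$ with $x_{N_k}\in AF_k$ exists, and compactness extracts a convergent subsequence; (ii) monotone decreasingness of $(AF_k)$ plus explicit closedness places the subsequential limit $x^*$ in $F$; (iii) since $N_{k_j}\to\infty$ and $x_{N_{k_j}}\to x^*$, for any target $k$ one can choose $j$ so that \emph{both} $\sum_{i\geq N_{k_j}}\varepsilon_i\leq\frac{1}{2(\beta_H(k)+1)}$ and $d(x^*,x_{N_{k_j}})\leq\frac{1}{\alpha_G(2\beta_H(k)+1)+1}$, whence the quasi-Fej\'er inequality at $n=N_{k_j}$ gives $H(d(x^*,x_n))\leq\frac{1}{\beta_H(k)+1}$ for all $n\geq N_{k_j}$, and $\beta_H$ closes the loop. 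Your closing remark about needing to control both summands on the right-hand side simultaneously before invoking $\beta_H$ is precisely the right technical point. One small contrast worth noting: the paper's own methodology for its \emph{generalized} results (Theorems~\ref{thm:quantThmFirst}, \ref{thm:quantThmSecond}, \ref{thm:mainRes}) deliberately avoids this direct subsequence-extraction argument and instead derives plain convergence from an explicit rate of metastability; your proof is the shorter non-quantitative route, which is appropriate for Proposition~\ref{pro:convRes} but would not yield the uniform bounds the paper is after in its later theorems.
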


For these convergence results, the work \cite{KLN2018} then provided quantitative variants in the form of an explicit construction of a rate of metastability depending only on moduli witnessing uniform quantitative reformulations of the core components of the above result (as will be discussed in more detail later on). Further, the work \cite{KLAN2019} similarly provided explicit constructions even for full rates of convergence in the context of Proposition \ref{pro:convResNormal}, subject to an additional dependence on a modulus of regularity (as will also be discussed later on). This construction of rates of convergence for Fej\'er monotone sequences was extended to quasi-Fej\'er monotone sequences in \cite{Pis2022}.

\medskip

The main aim of this paper now is to generalize these quantitative results from \cite{KLN2018,KLAN2019} to much more general circumstances, uniformly covering all the Fej\'er-like sequences discussed in the introduction. While in \cite{KLN2018,KLAN2019} the main aim was at providing quantitative variants of already established convergence results, we here not only aim at quantitative convergence results for these generalized Fej\'er monotone sequences but, as discussed before, also use to these quantitative results to obtain new ``non-quantitative'' convergence results. The following section now formally introduces the desired generalizations.

\section{Generalized (quasi-)Fej\'er monotone sequences}

As discussed before, the general situation which we want to consider in this paper simultaneously treats the following three extensions of the usual metric notion of Fej\'er monotonicity: we want to allow

\begin{enumerate}
\item more general distance functions $\phi:X\times X\to\mathbb{R}_+$ to uniformly capture notions like metrics, Bregman distances and beyond,
\item a partial version of Fej\'er monotonicity where only a subsequence is properly Fej\'er monotone,
\item the distance function to be able to vary along the sequence in the Fej\'er-type property.
\end{enumerate}

The following subsections now discuss these different extensions. We will always define these notions already in their ``quasi-type'' variants, incorporating errors.

\subsection{(Weakly) triangular mappings}

The most general class of distance functions $\phi:X\times X\to\mathbb{R}_+$ which we want to permit is only characterized by a particular weak property which we will call \emph{weakly triangular} which derives as a uniform quantitative version of the property
\[
\phi(y,x)=\phi(y,z)=0\to \phi(x,z)=0,
\]
which is a particular special case of the triangle inequality for $\phi$. Concretely, we consider the following notion:
\begin{definition}
Let $a\in X$. A function $\phi:X\times X\to\mathbb{R}_+$ is called \emph{weakly triangular} (w.r.t.\ $a$) with modulus $\theta:\mathbb{N}^2\to\mathbb{N}$ if for any $k,b\in\mathbb{N}$ and any $x,y,z\in X$ such that $d(a,x),d(a,y),d(a,z)\leq b$, it holds that
\[
\phi(y,x),\phi(y,z)\leq\frac{1}{\theta(k,b)+1}\to \phi(x,z)\leq\frac{1}{k+1}.
\]
We call $\phi$ \emph{weakly triangular} if there exists an accompanying modulus and we say that a modulus is uniform if it does not depend on $b$.\footnote{In $B$-bounded metric spaces, such a uniform modulus can of course always be derived from a usual modulus $\theta(k,b)$ by considering $\theta'(k)=\theta(k,B)$.}
\end{definition}

We also consider a strengthening of this notion which connects a function $\phi$ with the underlying metric $d$ in a certain way.

\begin{definition}
Let $a\in X$. A function $\phi:X\times X\to\mathbb{R}_+$ is called \emph{triangular} (w.r.t.\ $a$) with modulus $\theta:\mathbb{N}^2\to\mathbb{N}$ if for any $k,b\in\mathbb{N}$ and any $x,y,z\in X$ such that $d(a,x),d(a,y),d(a,z)\leq b$, it holds that
\[
\phi(y,x),\phi(y,z)\leq\frac{1}{\theta(k,b)+1}\to d(x,z)\leq\frac{1}{k+1}.
\]
We call $\phi$ \emph{triangular} if there exists an accompanying modulus and we say that a modulus is uniform if it does not depend on $b$.\footnote{As before, such a uniform modulus can be derived from a usual modulus bounded metric spaces.}
\end{definition}

We also introduce a notion of comparison for two distance functions: for $\phi,\psi:X\times X\to \mathbb{R}_+$ and $A:\mathbb{N}\to\mathbb{N}$, we write $\phi\preceq^A \psi$ if
\[
\psi(x,y)\leq\frac{1}{A(k)+1}\to \phi(x,y)\leq\frac{1}{k+1}
\]
for any $x,y\in X$.

\medskip

As for a choice of naming, we want to note that being triangular can be ``conceived of'' as being stronger than weak triangularity as in many cases, it can be inferred that $\phi\preceq^A d$ for a suitable function $A$ (e.g.\ if $\phi$ is uniformly continuous) in which case a modulus for triangularity can be converted into a modulus of weak triangularity (see in particular the discussions in Section \ref{sec:ConsDist} later on).

\medskip

In the following, at the most basic level only this property of being (weakly) triangular will be required from $\phi$ and we in particular allow $\phi$ to not be symmetric and to not satisfy the full triangle inequality.

\begin{remark}\label{rem:wDist}
The notion of triangular distances is related to the notion of $w$-distances introduced by Kada, Suzuki and Takahashi in \cite{KST1996} (see also \cite{Tak2000}). Concretely, a map $\rho:X\times X\to\mathbb{R}_+$ is called a $w$-distance if
\begin{enumerate}
\item $\rho(x,z)\leq\rho(x,y)+\rho(y,z)$ for all $x,y,z\in X$,
\item $\rho(x,\cdot)$ is lower semicontinuous for any $x\in X$,
\item for any $\varepsilon>0$, there exists a $\delta>0$ such that for all $x,y,z\in X$:
\[
\rho(z,x),\rho(z,y)\leq\delta\to d(x,y)\leq\varepsilon.
\]
\end{enumerate}
Condition (3) is equivalent to $\rho$ being triangular. However being triangular is clearly more general than being a $w$-distance and, as mentioned before, we in the following in particular do not require the triangle inequality, i.e.\ condition (1) above, which allows us later to treat Bregman distances as these generally do not enjoy that property.
\end{remark}

Particular examples of such distances and their moduli will be discussed later on but we already want to give some easy instances here.

\begin{lemma}[essentially \cite{Tak2000}]\label{lem:exTriDist}
The following functions are weakly triangular and even triangular:\footnote{The examples are adapted from \cite{Tak2000} and as such many of these distances are $w$-distances, or can be turned into such with a few additional conditions, in the sense of the previous Remark \ref{rem:wDist}.}
\begin{enumerate}
\item Any metric is uniformly triangular and uniformly weakly triangular with the common modulus $\theta(k)=2k+1$.
\item Assume $X$ is a normed space with norm $\norm{\cdot}$. Then $\phi(x,y)=\norm{x}+\norm{y}$ and $\phi(x,y)=\norm{y}$ are both uniformly triangular and uniformly weakly triangular with the common modulus $\theta(k)=2k+1$.
\item Given $c>0$, define $\phi(x,y)=c$ for any $x,y\in X$. Then $\phi$ is uniformly triangular and uniformly weakly triangular with the common modulus $\theta(k)=\overline{c}+1$ where $\overline{c}\in\mathbb{N}$ satisfies $c\geq 1/(\overline{c}+1)$.
\item Let $T:X\to X$ be any mapping. Then $\phi(x,y)=\max\{d(Tx,y),d(Tx,Ty)\}$ is uniformly triangular and uniformly weakly triangular with the common modulus $\theta(k)=2k+1$.
\item Let $F$ be bounded and $c>0$ be a bound on its diameter. Define
\[
\phi(x,y)=\begin{cases}d(x,y)&\text{if }x,y\in F,\\c&\text{if }x\not\in F\text{ or }y\not\in F.\end{cases}
\]
Then $\phi$ is uniformly triangular and uniformly weakly triangular with the common modulus $\theta(k)=2n_0(k+1)\dotdiv 1$ where $n_0=\ceil*{1/c(k+1)}+1$.
\end{enumerate}
Further, the maximum and linear combination of finitely many (weakly) triangular mappings is again (weakly) triangular and moduli for the combinations can be computed from the moduli of the constituents. Lastly, if $f:X\to \mathbb{R}_+$ is any function and $\phi$ is a triangular mapping, then $\phi'(x,y)=\max\{f(x),\phi(x,y)\}$ is also triangular with the same modulus.
\end{lemma}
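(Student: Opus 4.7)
The plan is to check each of the five examples by a direct unraveling of the definition of (weak) triangularity, with the triangle inequality for $d$ (or an internal triangle-like structure of $\phi$ itself) serving as the workhorse in every case. For items (1)--(4), each verification is essentially a one-line computation once the hypotheses $\phi(y,x), \phi(y,z) \le 1/(\theta(k)+1)$ are unfolded; in each such case $\phi$ is either $d$ itself or built from $d$ and norms in a way that the two bounds immediately yield the desired conclusions on $d(x,z)$ and $\phi(x,z)$. Since the moduli in all five cases turn out to be independent of the parameter $b$ of the distance-to-$a$ bound, the uniformity claim needs no further work.

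I would dispatch (1)--(4) in order. For (1), with $\phi = d$ and $\theta(k) = 2k+1$ the triangle inequality gives $d(x,z) \le d(x,y) + d(y,z) \le 1/(k+1)$, and the two notions coincide. For (2), the bounds on $\phi(y,x)$ and $\phi(y,z)$ force both $\norm{x}$ and $\norm{z}$ to be at most $1/(2k+2)$, and hence $\norm{x-z}$ and $\phi(x,z)$ are both at most $1/(k+1)$. For (3), the defining condition $c \ge 1/(\overline{c}+1)$ gives $c > 1/(\overline{c}+2)$, so the hypothesis $\phi(y,x) \le 1/(\overline{c}+2)$ can never be satisfied and the implication is vacuously true. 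For (4), one uses $Ty$ as a mediator: the hypotheses give $d(Ty, x), d(Ty, Tx), d(Ty, z), d(Ty, Tz) \le 1/(2k+2)$, from which $d(x,z) \le 1/(k+1)$ follows directly and $\phi(x,z) = \max\{d(Tx, z), d(Tx, Tz)\} \le 1/(k+1)$ follows by a further application of the triangle inequality with $Ty$ (noting $d(Tx, Ty) \le 1/(2k+2)$).

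The step requiring a bit more care --- and the main obstacle I anticipate --- is (5), where one must exclude the branch in which $\phi$ attains the constant $c$. With $n_0 = \ceil*{1/(c(k+1))} + 1$ one has $n_0 c (k+1) > 1$ and hence $c > 1/(n_0(k+1)) \ge 1/(2n_0(k+1))$, so the hypothesis $\phi(y,x), \phi(y,z) \le 1/(2n_0(k+1))$ forces both values to be strictly less than $c$; this in turn forces $x, y, z \in F$, whence $\phi$ coincides with $d$ on the relevant points and the ordinary triangle inequality yields $d(x,z) = \phi(x,z) \le 1/(n_0(k+1)) \le 1/(k+1)$. The closure statements are then routine: a modulus for $\max_i \phi_i$ is the componentwise maximum of moduli for the $\phi_i$, since any bound on the maximum is a bound on each component; a modulus for a positive linear combination $\sum_i \lambda_i \phi_i$ is obtained by absorbing the coefficients and the number of summands into the individual moduli (for weak triangularity one needs the slightly stronger bound $\phi_i(x,z) \le 1/(n\lambda_i (k+1))$, which is handled by the appropriate shift in the argument of $\theta_i$); and for $\phi'(x,y) = \max\{f(x), \phi(x,y)\}$ the observation is simply that any bound on $\phi'$ is automatically a bound on $\phi$, so any triangularity modulus for $\phi$ continues to work unchanged for $\phi'$.
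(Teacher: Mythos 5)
Your proposal is correct and takes the obvious direct route; the paper itself omits the proof ("The proofs are immediate, so we omit them"), so you are simply filling in the straightforward verifications that the authors intended. Each of (1)--(5) is handled correctly: (1) is the triangle inequality; (2) and (4) use the triangle inequality through the mediator ($y$, resp.\ $Ty$) after unfolding the definitions; (3) is vacuous because the hypothesis can never hold; (5) uses the lower bound $1/(2n_0(k+1)) < c$ to exclude the constant branch and reduce to the triangle inequality inside $F$. The closure observations are also right: for the maximum take the pointwise maximum of moduli; for linear combinations absorb coefficients and the number of summands into the target error; and for $\phi'(x,y) = \max\{f(x),\phi(x,y)\}$ note that a bound on $\phi'$ is a bound on $\phi$, so the same triangularity modulus works (and, as the lemma's phrasing suggests, this closure claim applies only to triangularity, not weak triangularity, since nothing controls $f(x)$ --- your treatment correctly reflects that). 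The only minor caveat is that the linear-combination step is left at the sketch level, but spelling out the exact shift in $\theta_i$ is routine and the paper itself does not demand explicit formulas there.
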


The proofs are immediate, so we omit them.

\medskip

Beyond these examples, we now just note two things: For one, note that the reference point $a$ used for ``bounding'' the elements $x,y,z$ is not crucial for the existence of a modulus since if $\theta$ is such a modulus of (weak) triangularity w.r.t.\ $a$ and $a'$ is such that $d(a,a')\leq r$, then $\theta'(k,b)=\theta(k,b+r)$ is a modulus of (weak) triangularity w.r.t.\ $a'$.

Also, we note for another that if $\phi$ is continuous in both arguments, then a simple compactness argument shows that over compact spaces $X$ the property
\[
\phi(y,x)=\phi(y,z)=0\to \phi(x,z)=0
\]
already implies that $\phi$ is uniformly weakly triangular and that the property 
\[
\phi(y,x)=\phi(y,z)=0\to d(x,z)=0
\]
already implies that $\phi$ is uniformly triangular.

\begin{remark}[For logicians]
The underlying logical methods from proof mining (see in particular \cite{GeK2008,Koh2005} and the treatment in \cite{Koh2008}) actually guarantee that corresponding moduli of (weak) triangularity can be extracted from a large class of proofs of the above respective properties
\[
\phi(y,x)=\phi(y,z)=0\to \phi(x,z)=0
\]
and
\[
\phi(y,x)=\phi(y,z)=0\to d(x,z)=0,
\]
even potentially in the absence of any compactness assumptions.
\end{remark} 

\medskip

Relating to such distance functions, we will in the following consider an associated notion of Fej\'er monotonicity (derived by generalizing the previous definition):

\begin{definition}
Let $\phi:X\times X\to\mathbb{R}_+$ be any function. Given two functions $G,H:\mathbb{R}_+\to\mathbb{R}_+$, a non-empty subset $F\subseteq X$ and a sequence $(\varepsilon_n)\subseteq\mathbb{R}_+$ with $\sum\varepsilon_n<\infty$, a sequence $(x_n)\subseteq X$ is called \emph{$\phi$-$(G,H)$-quasi-Fej\'er monotone w.r.t.\ $F$ and $(\varepsilon_n)$} if for all $n,m\in\mathbb{N}$ and all $p\in F$:
\[
H(\phi(p,x_{n+m}))\leq G(\phi(p,x_n))+\sum_{i=n}^{n+m-1}\varepsilon_i.
\]
\end{definition}

\subsection{Partial Fej\'er monotonicity}

Now, regarding the ``partial Fej\'er monotonicity'', we consider the following more concrete version of the previously only loosely defined notion. By partial Fej\'er monotonicity, we want to understand that only a subsequence is properly Fej\'er monotone while the other elements ``connect''  to this subsequence just by some very weak form of monotonicity which is some form of ``slowed-down'' Fej\'er monotonicity.

\medskip

Concretely, we will (in some sense without loss of generality) identify this designated subsequence with the elements of the sequence at an even index and thus require that for a given sequence $(x_n)$, only $(x_{2n})$ is Fej\'er monotone w.r.t.\ $F$. For the odd elements, we require only a weak \emph{$f$-monotonicity} property that relates the odd elements $x_{2(n+m)+1}$ to previous even elements $x_{2f(n)}$ where $f:\mathbb{N}\to\mathbb{N}$ is some number-theoretic function which is nondecreasing but otherwise can be mostly arbitrary (and in particular can be essentially arbitrarily slow increasing). Additionally incorporating the previous generalizations introduced in the context of Fej\'er monotonicity like the functions $G,H$ as well as using a general function $\phi$ for measuring distances, we arrive at the following definition:

\begin{definition}
Let $\phi:X\times X\to\mathbb{R}_+$ be any function. Given two functions $G,H:\mathbb{R}_+\to\mathbb{R}_+$, a non-empty subset $F\subseteq X$, a sequence $(\varepsilon_n)\subseteq\mathbb{R}_+$ with $\sum\varepsilon_n<\infty$ and given a function $f:\mathbb{N}\to\mathbb{N}$, a sequence $(x_n)\subseteq X$ is called \emph{$\phi$-$(G,H)$-quasi-$f$-monotone w.r.t.\ $F$ and $(\varepsilon_n)$} if for all $n,m\in\mathbb{N}$ and all $p\in F$:
\[
H(\phi(p,x_{2(n+m)+1}))\leq G(\phi(p,x_{2f(n)}))+\sum_{i=f(n)}^{n+m}\varepsilon_i.
\]
\end{definition}

\subsection{Variable distances}

At last, to capture the various methods employing changing metrics or Bregman distances, we want to consider the above notions relativized to a sequence of distance functions $(\phi_n)$. 

\begin{definition}
Let $(\phi_n)$ be a sequence of functions $\phi_n:X\times X\to\mathbb{R}_+$. Given two functions $G,H:\mathbb{R}_+\to\mathbb{R}_+$, a non-empty subset $F\subseteq X$ and a sequence $(\varepsilon_n)\subseteq\mathbb{R}_+$ with $\sum\varepsilon_n<\infty$, a sequence $(x_n)\subseteq X$ is called \emph{$(\phi_n)$-$(G,H)$-quasi-Fej\'er monotone w.r.t.\ $F$ and $(\varepsilon_n)$} if for all $n,m\in\mathbb{N}$ and all $p\in F$:
\[
H(\phi_{n+m}(p,x_{n+m}))\leq G(\phi_n(p,x_n))+\sum_{i=n}^{n+m-1}\varepsilon_i.
\]
\end{definition}

Similarly, we also consider a version of the accompanying notion of $f$-monotonicity, relativized to a sequence $(\phi_n)$:

\begin{definition}
Let $(\phi_n)$ be a sequence of functions $\phi_n:X\times X\to\mathbb{R}_+$. Given two functions $G,H:\mathbb{R}_+\to\mathbb{R}_+$, a non-empty subset $F\subseteq X$, a sequence $(\varepsilon_n)\subseteq\mathbb{R}_+$ with $\sum\varepsilon_n<\infty$ and given a function $f:\mathbb{N}\to\mathbb{N}$, a sequence $(x_n)\subseteq X$ is called \emph{$(\phi_n)$-$(G,H)$-quasi-$f$-monotone w.r.t.\ $F$ and $(\varepsilon_n)$} if for all $n,m\in\mathbb{N}$ and all $p\in F$:
\[
H(\phi_{2(n+m)+1}(p,x_{2(n+m)+1}))\leq G(\phi_{2f(n)}(p,x_{2f(n)}))+\sum_{i=f(n)}^{n+m}\varepsilon_i.
\]
\end{definition}

Concretely, this constitutes our final notion of generalized Fej\'er monotonicity:

\begin{definition}
Let $(\phi_n)$ be a sequence of functions $\phi_n:X\times X\to\mathbb{R}_+$. Given two functions $G,H:\mathbb{R}_+\to\mathbb{R}_+$, a non-empty subset $F\subseteq X$, a sequence $(\varepsilon_n)\subseteq\mathbb{R}_+$ with $\sum\varepsilon_n<\infty$ and given a function $f:\mathbb{N}\to\mathbb{N}$, we call a sequence $(x_n)\subseteq X$ \emph{partially $(\phi_n)$-$(G,H)$-quasi-Fej\'er monotone w.r.t.\ $F$, $f$ and $(\varepsilon_n)$} if $(x_{2n})$ is $(\phi_{2n})$-$(G,H)$-quasi-Fej\'er monotone w.r.t.\ $F$ and $(\varepsilon_{n})$ and $(x_n)$ is $(\phi_n)$-$(G,H)$-quasi-$f$-monotone w.r.t.\ $F$ and $(\varepsilon_n)$.
\end{definition}

The exemplary applications given later, in particular also in combination with the work \cite{Pis2024a}, then illustrate the versatility and applicability of this generalized Fej\'er monotonicity.
 
\section{Rates of metastability for generalized Fej\'er monotone sequences}

One of the main results of the first part of this paper will be theorems that extend the usual convergence result for quasi-Fej\'er monotone sequences presented in Proposition \ref{pro:convRes} to these generalized Fej\'er monotone sequences. This generalization will be obtained through a preceding quantitative result by ``forgetting'' the quantitative information and moving to a simple convergence conclusion.

\medskip

These quantitative versions, which we therefore shall establish first, form the other main cluster of results of this first part. They themselves are obtained by generalizing the quantitative versions of Proposition \ref{pro:convRes} due to \cite{KLN2018,KLAN2019}. To that end, we generalize the moduli introduced in \cite{KLN2018,KLAN2019} that witness uniform quantitative versions of the central assumptions of Proposition \ref{pro:convRes} (which are equivalent to the original assumptions in the context of compact spaces).

\begin{remark}[For logicians]
Similar to \cite{KLN2018,KLAN2019}, these moduli are introduced as guided by the underlying proof-theoretic methodology of proof mining. In that way, these moduli all have the essential property that the so-called logical metatheorems used in the underlying proof mining program (recall the previously mentioned references) guarantee the existence of a quantitative variant of Proposition \ref{pro:convRes} similar to the ones presented in \cite{KLN2018,KLAN2019} which depends only on such moduli. Moreover, in many cases, the moduli in question can be extracted from proofs of the underlying non-quantitative properties at hand, again as guided by the same logical methodology.
\end{remark}

In our case, the main generalizations will be obtained by extending the previous moduli from \cite{KLN2018} to be applicable in the context of a general mapping $\phi$. Concretely, the main assumptions of Proposition \ref{pro:convRes} are translated to the following $\phi$-relative variants and associated uniform quantitative versions in similarity to \cite{KLN2018} which will prominently feature in the quantitative results later on:

\begin{center}
\begin{tabular}{c||c}
total $\phi$-boundedness & moduli of total $\phi$-boundedness $\gamma$\\
\hline
explicit $\phi$-closedness & moduli of uniform $\phi$-closedness $\omega,\delta$\\
\hline
approximate $F$-points & approximate $F$-point bounds $\Phi$\\
\hline
$\liminf$-property w.r.t.\ $F$ & $\liminf$-bounds $\Phi$ w.r.t.\ $F$\\
\hline
$(\phi_n)$-$(G,H)$-quasi-Fej\'er monotonicity & moduli of uniform $(\phi_n)$-$(G,H)$-quasi-Fej\'er monotonicity $\chi$\\
\hline
$(\phi_n)$-$(G,H)$-quasi-$f$-monotonicity & moduli of uniform $(\phi_n)$-$(G,H)$-quasi-$f$-monotonicity $\zeta$
\end{tabular}
\end{center}

The main new quantitative notion considered here is the last modulus of uniform $(\phi_n)$-$(G,H)$-quasi-$f$-monotonicity whose details and motivation we will discuss later in a more precise way. The other notions, while here considered in variants relative to the distances $(\phi_n)$, arise as straightforward modifications of the metric-based notions introduced in \cite{KLN2018}. We nevertheless, both due to the alteration by the functions $(\phi_n)$ as well as for reasons of self-containedness, discuss also those moduli briefly.\\

While these moduli and the corresponding convergence results proved later will be presented in an abstract fashion, we refer to \cite{KLN2018}, the applications derived thereof as well as to the applications presented later on in this paper and those developed in \cite{Pis2024a} for many versatile examples of concrete instantiations of these moduli.

\subsection{Moduli of total boundedness}

Compactness features as an essential assumption already in the strong convergence results formulated in Propositions \ref{pro:convResNormal} and \ref{pro:convRes}. As such, also in this paper, usual compactness as well as a version relativized to general distances will be (at least at first) feature crucially in the context of the upcoming strong convergence results. The formulation of compactness that is most crucial here is that obtained via the notion of total boundedness: a set $A\subseteq X$ is called \emph{totally bounded} if there exists an $\varepsilon$-net covering it for any $\varepsilon>0$. An alternative characterization of this notion commonly used in the context of proof mining was introduced in \cite{Ger2008}:
\begin{definition}[\cite{Ger2008}]
For $A\neq\emptyset$, $\gamma:\mathbb{N}\to\mathbb{N}$ is called a \emph{modulus of total boundedness for $A$} if for all $k\in\mathbb{N}$ and any $(x_n)\subseteq A$:
\[
\exists 0\leq i<j\leq\gamma(k)\left( d(x_i,x_j)\leq\frac{1}{k+1}\right).
\]
\end{definition}
Indeed, $A$ is totally bounded if, and only if $A$ has a modulus of total boundedness (see \cite{KLN2018}, Proposition 2.4).\footnote{Notice that the notion of a modulus of total boundedness from \cite{Ger2008} is discussed in \cite{KLN2018} under the name of a II-modulus of total boundedness.}

\medskip

As mentioned before, we will require this notion in the context of general functions $\phi:X\times X\to\mathbb{R}_+$.

\begin{definition}
We say that a set $A\subseteq X$ is totally $\phi$-bounded if for any $k\in\mathbb{N}$ there are $a_0,\dots,a_{j_k}\in X$ such that
\[
\forall x\in A\exists 0\leq i\leq j_k\left(\phi(a_i,x)\leq\frac{1}{k+1}\right).
\]
\end{definition}

This gives rise to the following analogous notion of modulus:

\begin{definition}
Let $\phi:X\times X\to\mathbb{R}_+$ be any function. For $A\neq\emptyset$, $\gamma:\mathbb{N}\to\mathbb{N}$ is called a \emph{modulus of total $\phi$-boundedness for $A$} if for all $k\in\mathbb{N}$ and any $(x_n)\subseteq A$:
\[
\exists 0\leq i<j\leq\gamma(k)\left( \phi(x_j,x_i)\leq\frac{1}{k+1}\right).
\]
\end{definition}

The existence of such a modulus is in fact equivalent to being totally $\phi$-bounded, at least if $\phi$ is weakly triangular, as the following proposition shows (which is constructed in analogy to Proposition 2.4 from \cite{KLN2018}):

\begin{proposition}
Let $\phi$ be uniformly weakly triangular. If a set $A\subseteq X$ is totally $\phi$-bounded, then it has a modulus of total $\phi$-boundedness. Conversely, if $\phi$ additionally satisfies $\phi(x,x)=0$ for all $x\in X$, then a set $A\subseteq X$ that has a modulus of total $\phi$-boundedness is also totally $\phi$-bounded.
\end{proposition}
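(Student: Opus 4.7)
The plan is to establish both directions by adapting the classical metric argument (cf.\ Proposition 2.4 of \cite{KLN2018}) to the asymmetric setting using the uniform modulus of weak triangularity. The forward implication will rest on a pigeonhole argument, while the converse will combine a greedy selection procedure with a quantitative ``quasi-symmetry'' property extracted from weak triangularity once the assumption $\phi(x,x)=0$ is available.

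For the forward direction, let $\theta$ be a uniform modulus of weak triangularity and, for each $k\in\mathbb{N}$, let $a^{(k)}_0,\ldots,a^{(k)}_{j_k}\in X$ be a $\phi$-net for $A$ at level $k$. I would define $\gamma(k):=j_{\theta(k)}+1$ and verify that this is a modulus of total $\phi$-boundedness. Given $(x_n)\subseteq A$, the $j_{\theta(k)}+2$ initial terms $x_0,\ldots,x_{\gamma(k)}$ exceed the number of available net points at level $\theta(k)$, so by pigeonhole there exist indices $0\leq i<j\leq\gamma(k)$ and a common net point $a$ with $\phi(a,x_i),\phi(a,x_j)\leq 1/(\theta(k)+1)$. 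Applying the weak triangularity modulus with $y=a$, $x=x_j$, $z=x_i$ then produces $\phi(x_j,x_i)\leq 1/(k+1)$, which is exactly what the modulus is required to deliver.

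For the converse, I would first observe the quantitative quasi-symmetry implicit in the hypotheses: specialising the definition of weak triangularity with $z=y$ and using $\phi(y,y)=0$ yields that $\phi(y,x)\leq 1/(\theta(k)+1)$ implies $\phi(x,y)\leq 1/(k+1)$ for all $x,y\in X$. Assuming $\gamma$ is a modulus of total $\phi$-boundedness, suppose toward a contradiction that for some fixed $k$ no finite $1/(k+1)$-net for $A$ exists. Then one builds inductively a sequence $(a_n)\subseteq A$ with $\phi(a_l,a_{i+1})>1/(k+1)$ for every $l\leq i$, at each step picking a point of $A$ not yet covered by $\{a_0,\ldots,a_i\}$. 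The contrapositive of quasi-symmetry upgrades these strict inequalities to $\phi(a_{i+1},a_l)>1/(\theta(k)+1)$ for all $l\leq i$. Applying $\gamma$ to $(a_n)$ at level $\theta(k)$ now produces indices $0\leq i<j\leq\gamma(\theta(k))$ with $\phi(a_j,a_i)\leq 1/(\theta(k)+1)$, contradicting the construction. Hence the greedy process must terminate and its output is a $1/(k+1)$-net for $A$.

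The main obstacle is precisely the asymmetry of $\phi$ in the converse direction: a naive greedy argument supplies inequalities of the form $\phi(a_l,a_j)>1/(k+1)$ for $l<j$, while the modulus of total $\phi$-boundedness yields one in the reversed ordering $\phi(a_j,a_i)\leq 1/(k+1)$, so no contradiction can be read off without a bridge between the two orderings of the arguments of $\phi$. It is the derivation of quasi-symmetry from weak triangularity (using $z=y$ together with $\phi(y,y)=0$) that supplies this bridge, which also explains why the additional hypothesis $\phi(x,x)=0$ is needed for the converse but dispensable for the forward implication.
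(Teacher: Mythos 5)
Your proposal is correct and follows essentially the same route as the paper's proof: the forward direction via a pigeonhole argument on a $\phi$-net at level $\theta(k)$, and the converse via a greedy construction whose strict inequalities are ``reversed'' using the quasi-symmetry $\phi(y,x)\leq 1/(\theta(k)+1)\Rightarrow\phi(x,y)\leq 1/(k+1)$ obtained from weak triangularity together with $\phi(x,x)=0$. The only cosmetic difference is that the paper routes through an auxiliary I-modulus $\alpha$ (yielding the explicit bound $\alpha(k)=\gamma(\theta(k))-1$ on the net size), whereas you derive the existence of a net directly; the underlying ideas and the use of the modulus $\theta$ are identical.
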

\begin{proof}
Throughout, let $\theta$ be a modulus of uniform weak triangularity of $\phi$.

\medskip

Note that a set $A$ is clearly totally $\phi$-bounded if, and only if, there exists a modulus $\alpha:\mathbb{N}\to\mathbb{N}$ (derived from the notion of I-modulus from \cite{KLN2018}) satisfying that for any $k\in\mathbb{N}$ there are $a_0,\dots,a_{\alpha(k)}\in X$ such that
\[
\forall x\in A\exists 0\leq i\leq \alpha(k)\left(\phi(a_i,x)\leq\frac{1}{k+1}\right).
\]
We show that such a modulus $\alpha$ and a modulus of total $\phi$-boundedness can be translated into each other.

\medskip

Suppose now first that $\alpha$ is a modulus for a set $A$ as above and define $\gamma(k)=\alpha(\theta(k))+1$. We show that $\gamma$ is a modulus of total $\phi$-boundedness of $A$. For this, let
\[
a_0,\dots,a_{\alpha(\theta(k))}\in X
\]
be such that
\[
\forall x\in A\exists 0\leq i\leq\alpha(\theta(k))\left( \phi(a_i,x)\leq\frac{1}{\theta(k)+1}\right).
\]
Let $x_0,\dots,x_{\alpha(\theta(k))+1}$ be any sequence from $A$. By the pigeonhole principle, there are $i<j$ such that for some $l\leq \alpha(\theta(k))$:
\[
\phi(a_l,x_i),\phi(a_l,x_j)\leq\frac{1}{\theta(k)+1}.
\]
Thus we in particular have
\[
\phi(x_j,x_i)\leq\frac{1}{k+1}
\]
as $\theta$ is a modulus of uniform triangularity of $\phi$.

\medskip

Conversely, suppose that $\gamma$ is a modulus of total $\phi$-boundedness for a set $A$ and define $\alpha(k)=\gamma(\theta(k))-1$. Note first that, by the defining property of $\gamma$, we have $\gamma(k)\geq 1$ for any $k$ as $\gamma$ is a modulus so that $\alpha$ is well-defined. We show that $\alpha$ witnesses that $A$ is totally $\phi$-bounded as above. Suppose not, i.e.\ there exists a $k\in\mathbb{N}$ such that
\[
\forall a_0,\dots,a_{\gamma(\theta(k))-1}\in X\exists x\in A\forall 0\leq i\leq \gamma(\theta(k))-1\left( \phi(a_i,x)>\frac{1}{k+1}\right).\tag{$*$}
\]
By induction we show that for all $l\leq\gamma(\theta(k))$:
\[
\exists \beta_0,\dots,\beta_{l}\in A\forall 0\leq i<j\leq l\left(\phi(\beta_j,\beta_i)>\frac{1}{\theta(k)+1}\right)
\]
which, for $l=\gamma(\theta(k))$, in particular contradicts that $\gamma$ is a modulus of total $\phi$-boundedness. For $l=0$, pick $\beta_0\in A$ arbitrary (where we assume w.l.o.g.\ $A\neq\emptyset$). For the step from $l$ to $l+1$, let $\beta_0,\dots,\beta_l$ be chosen from the induction hypothesis. Now apply ($*$) to 
\[
a_i=\begin{cases}\beta_i&\text{if }i\leq l,\\\beta_l&\text{if }l\leq i\leq\gamma(\theta(k))-1,\end{cases}
\]
to get an $x\in A$ such that
\[
\phi(a_i,x)>\frac{1}{k+1}\text{ for all }i\leq l,
\]
i.e.\ in particular $\phi(\beta_i,x)>1/(k+1)$ for all $i\leq l$. Now suppose that
\[
\phi(x,\beta_i)\leq\frac{1}{\theta(k)+1}.
\]
Then as $\phi(x,x)=0$, we get
\[
\phi(\beta_i,x)\leq\frac{1}{k+1}
\]
which is a contradiction to before. Thus we have $\phi(x,\beta_i)>\frac{1}{\theta(k)+1}$ and thus $\beta_0,\dots,\beta_l,\beta_{l+1}$ with $\beta_{l+1}=x$ satisfies the desired claim for $l+1$.
\end{proof}

\subsection{Moduli of uniform closedness}

In \cite{KLN2018}, the explicit closedness of $F$ w.r.t.\ $AF_k$ is upgraded to the existence of moduli of uniform closedness in the following sense:

\begin{definition}[\cite{KLN2018}]
The set $F$ is called \emph{uniformly closed with moduli $\delta_F,\omega_F:\mathbb{N}\to\mathbb{N}$ \textnormal{(}w.r.t.\ $(AF_k)$\textnormal{)}} if for all $k\in\mathbb{N}$ and all $p,q\in X$:
\[
q\in AF_{\delta_F(k)}\text{ and }d(q,p)\leq\frac{1}{\omega_F(k)+1}\text{ implies }p\in AF_k.
\]
\end{definition}

Following \cite{KLN2018}, this quantitative reformulation is motivated by rewriting the definition of explicit closedness via 
\begin{align*}
&\forall p\in X\left(\forall N,M\in\mathbb{N}\left( AF_M\cap\overline{B}\left(p,\frac{1}{N+1}\right)\neq\emptyset\right)\to p\in F\right)\\
&\qquad\qquad\qquad\equiv \forall p\in X\forall k\in\mathbb{N}\left(\forall N,M\in\mathbb{N}\left( AF_M\cap\overline{B}\left(p,\frac{1}{N+1}\right)\neq\emptyset\right)\to p\in AF_k\right)\\
&\qquad\qquad\qquad\equiv \forall p\in X\forall k\in\mathbb{N}\exists N,M\in\mathbb{N}\forall q\in X\left( q\in AF_M\text{ and }d(q,p)\leq\frac{1}{N+1}\to p\in F\right)
\end{align*}
and then requiring that corresponding moduli exist which bound (and thus witness) the quantifiers over $N,M$ uniformly in $k$, i.e.\ without dependence on $p$.

\medskip

As with moduli of total $\phi$-boundedness, we also require this notion to be relativized to general functions $\phi:X\times X\to\mathbb{R}_+$.

\begin{definition}
Let $\phi:X\times X\to\mathbb{R}_+$ be any function. The set $F$ is \emph{explicitly $\phi$-closed} w.r.t.\ $AF_k$ if for all $p\in X$:
\[
\forall N,M\in\mathbb{N}\left( AF_M\cap\overline{B}_\phi\left(p,\frac{1}{N+1}\right)\neq\emptyset\right)\rightarrow p\in F,
\]
where we write $\overline{B}_\phi(p,r)=\{x\in X\mid \phi(x,p)\leq r\}$.
\end{definition}

In a similar vein as the moduli of uniform closedness arose above from a suitable rewriting of the property of explicit closedness, we now define moduli of uniform $\phi$-closedness:

\begin{definition}
Let $\phi:X\times X\to\mathbb{R}_+$ be any function. The set $F$ is called \emph{uniformly $\phi$-closed with moduli $\delta_F,\omega_F:\mathbb{N}\to\mathbb{N}$ \textnormal{(}w.r.t.\ $(AF_k)$\textnormal{)}} if for all $k\in\mathbb{N}$ and all $p,q\in X$:
\[
q\in AF_{\delta_F(k)}\text{ and }\phi(q,p)\leq\frac{1}{\omega_F(k)+1}\text{ implies }p\in AF_k.
\]
\end{definition}

\begin{remark}[For logicians]
Similar to before, also these moduli of uniform closedness can be extracted by separate applications of the logical metatheorems of proof mining to the respective proofs of the explicit closedness property (see also the discussion in \cite{KLN2018}). 
\end{remark}

\subsection{Approximate $F$-point bounds}

The approximate $F$-point property will be quantitatively witnessed by an accompanying bound on the index in the sense of the following definition:

\begin{definition}[\cite{KLN2018}]
A bound $\Phi(k)$ on the quantifier ``$\exists N\in\mathbb{N}$'' in the definition of approximate $F$-points detailed before, i.e.\ a bound such that
\[
\forall k\in\mathbb{N}\exists N\leq\Phi(k)\left( x_{N}\in AF_k\right),
\]
is called an \emph{approximate $F$-point bound for $(x_n)$ \textnormal{(}w.r.t.\ $(AF_k)$\textnormal{)}}.
\end{definition}

Throughout, we will always assume that any approximate $F$-point bound $\Phi$ is monotone nondecreasing (note that this is without loss of generality as given an arbitrary approximate $F$-point bound $\Phi$, the function $\Phi^M(k):=\max\{\Phi(j)\mid j\leq k\}$ is also an approximate $F$-point bound and monotone nondecreasing, see also \cite{KLN2018}).

\begin{remark}[For logicians]
The extractability of these bounds from proofs of the underlying property of approximate $F$-points via the logical metatheorems of proof mining can in particular be guaranteed if the relation $x_N\in AF_k$ can be expressed by an existential formula in the language used by these metatheorems (see also in particular the discussion in \cite{KLN2018}).
\end{remark}

\subsection{$\liminf$-bounds w.r.t.\ $F$}

The $\liminf$-property that extends the approximate $F$-points as an assumption in the context of quasi-Fej\'er monotone sequences will be quantitatively witnessed by a so-called $\liminf$-bound in the sense of \cite{KLN2018}:
\begin{definition}[\cite{KLN2018}]
A bound $\Phi(k,n)$ is a \emph{$\liminf$-bound for $(x_n)$ w.r.t.\ $F$ \textnormal{(}and $(AF_k)$\textnormal{)}} if
\[
\forall k,n\in\mathbb{N}\exists N\in [n;\Phi(k,n)](x_{N}\in AF_k).
\]
\end{definition}

Also here, we will throughout assume that any $\liminf$-bound $\Phi$ is monotone nondecreasing in both arguments (which is, by a similar argument as with the approximate $F$-point bounds, again without loss of generality).

\begin{remark}[For logicians]
As before with the approximate $F$-point bounds, the extractability of such bounds from proofs of the $\liminf$-property can in particular be guaranteed if $x_N\in AF_k$ can be expressed by an existential formula.
\end{remark}

\subsection{Moduli of uniform Fej\'er monotonicity}

In the case of Fej\'er monotone sequences, we move to the notion of uniform Fej\'er monotonicity in the sense of \cite{KLN2018} with accompanying modulus:
\begin{definition}[\cite{KLN2018}]
A sequence $(x_n)\subseteq X$ is called \emph{uniformly $(G,H)$-Fej\'er monotone w.r.t.\ $F$ \textnormal{(}and $(AF_k)$\textnormal{)}} if for all $r,n,m\in\mathbb{N}$, there exists a $k\in\mathbb{N}$ such that for all $p\in AF_k$:
\[
\forall l\leq m\left( H(d(p,x_{n+l}))< G(d(p,x_n))+\frac{1}{r+1}\right).
\]
A modulus of uniform $(G,H)$-Fej\'er monotonicity w.r.t.\ $F$ (and $(AF_k)$) is a function $\chi(n,m,r)$ bounding (and thus witnessing) such a $k$ in terms of the $r,n,m$.
\end{definition}

Now, for our generalized notion of Fej\'er monotonicity involving sequences of distance functions $(\phi_n)$, we consider the following natural extension of this notion of modulus (also incorporating  the error sequence featuring in  quasi-Fej\'er monotone sequences, as already discussed in \cite{KLN2018}):

\begin{definition}
Let $(\phi_n)$ be a sequence of functions $\phi_n:X\times X\to\mathbb{R}_+$. A sequence $(x_n)\subseteq X$ is called \emph{uniformly $(\phi_n)$-$(G,H)$-quasi-Fej\'er monotone w.r.t.\ $F$ \textnormal{(}and $(AF_k)$\textnormal{)} with errors $(\varepsilon_n)$} if for all $r,n,m\in\mathbb{N}$, there exists a $k\in\mathbb{N}$ such that for all $p\in AF_k$:
\[
\forall l\leq m\left( H(\phi_{n+l}(p,x_{n+l}))< G(\phi_n(p,x_n))+\sum_{i=n}^{n+l-1}\varepsilon_i+\frac{1}{r+1}\right).
\]
A modulus of uniform $(\phi_n)$-$(G,H)$-quasi-Fej\'er monotonicity w.r.t.\ $F$ (and $(AF_k)$) is a function $\chi(n,m,r)$ bounding (and thus witnessing) such a $k$ in terms of the $r,n,m$.
\end{definition}

If all $\phi_n$ coincide with one function $\phi$, then we call such a modulus just a \emph{modulus of uniform $\phi$-$(G,H)$-quasi-Fej\'er monotonicity} and we omit the $\phi$ if it coincides with the metric $d$.

\medskip

A simple compactness argument shows that if $F$ is explicitly closed w.r.t.\ $AF_k$, all $\phi_n$ are continuous in their left argument and $G,H$ are continuous, then normal $(\phi_n)$-$(G.H)$-quasi-Fej\'er monotonicity already implies uniform $(\phi_n)$-$(G.H)$-quasi-Fej\'er monotonicity (i.e.\ the existence of such a modulus).

\begin{remark}[For logicians]
For a discussion on the extractability of such moduli from proofs of the underlying properties of the various forms of Fej\'er monotonicity (which in essence can be guaranteed if the property $p\in AF_k$ can be expressed by a universal formula in the underlying language of the metatheorems), we again refer to the discussion given in \cite[Section 4.1]{KLN2018}.
\end{remark}

\subsection{Moduli of uniform $f$-monotonicity}

We now want to discuss the new quantitative notion necessary for the following results on rates of metastability for generalized Fej\'er monotone sequences. To this end, we want to give a more detailed account on the reasoning leading up to this notion (which is similar to that presented in \cite{KLN2018} for the moduli of uniform Fej\'er monotonicity): Consider the $(\phi_n)$-$(G,H)$-quasi-$f$-monotonicity property, i.e.
\[
H(\phi_{2(n+m)+1}(p,x_{2(n+m)+1}))\leq G(\phi_{2f(n)}(p,x_{2f(n)}))+\sum_{i=f(n)}^{n+m}\varepsilon_i
\]
for all $n,m\in\mathbb{N}$ and all $p\in F$. This property can be rewritten into
\begin{align*}
&\forall n,m\in\mathbb{N}\forall p\in X\Bigg( \forall k\in\mathbb{N}(p\in AF_k)\\
&\qquad\rightarrow \forall r\in\mathbb{N}\forall l\leq m\Bigg(H(\phi_{2(n+l)+1}(p,x_{2(n+l)+1}))<G(\phi_{2f(n)}(p,x_{2f(n)}))+\sum_{i=f(n)}^{n+l}\varepsilon_i+\frac{1}{r+1}\Bigg)\Bigg)
\end{align*}
which in turn is equivalent to
\begin{align*}
&\forall r,n,m\in\mathbb{N}\forall p\in X\exists k\in\mathbb{N}\Bigg( p\in AF_k\\
&\qquad\rightarrow\forall l\leq m\Bigg(H(\phi_{2(n+l)+1}(p,x_{2(n+l)+1}))<G(\phi_{2f(n)}(p,x_{2f(n)}))+\sum_{i=f(n)}^{n+l}\varepsilon_i+\frac{1}{r+1}\Bigg)\Bigg).
\end{align*}

Like with uniform Fej\'er monotonicity, we now call sequence uniformly $f$-monotone if we can bound (and thus witness) such a $k$ uniformly in $p$, i.e.\ only depending on $r,n,m\in\mathbb{N}$. This lead us to the following definition:

\begin{definition}
Let $(\phi_n)$ be a sequence of functions $\phi_n:X\times X\to\mathbb{R}_+$. A sequence $(x_n)$ is called \emph{uniformly $(\phi_n)$-$(G,H)$-quasi-$f$-monotone w.r.t.\ $F$ \textnormal{(}and $(AF_k)$\textnormal{)} with errors $(\varepsilon_n)$} if for all $r,n,m\in\mathbb{N}$, there exists a $k\in\mathbb{N}$ such that for all $p\in AF_k$:
\[
\forall l\leq m\left(H(\phi_{2(n+l)+1}(p,x_{2(n+l)+1}))<G(\phi_{2f(n)}(p,x_{2f(n)}))+\sum_{i=f(n)}^{n+l}\varepsilon_i+\frac{1}{r+1}\right).
\]
A modulus of \emph{uniform $(\phi_n)$-$(G,H)$-quasi-$f$-monotonicity w.r.t.\ $F$ \textnormal{(}and $(AF_k)$\textnormal{)} with errors $(\varepsilon_n)$} is an upper bound $\zeta(n,m,r)$ (and consequently a realizer) for $k$ in terms of $r,n,m$.
\end{definition}

Again, a simple compactness argument shows that the notions of normal and uniform $(\phi_n)$-$(G,H)$-quasi-$f$-monotonicity coincide in compact spaces if $F$ is explicitly closed and all $\phi_n$ as well as $G,H$ are suitably continuous.

\medskip

If all $\phi_n$ coincide with one function $\phi$, then we call such a modulus just a \emph{modulus of uniform $\phi$-$(G,H)$-quasi-$f$-monotonicity} and we omit the $\phi$ as before if it coincides with the metric $d$.

\begin{remark}[For logicians]
As the above motivating discussion shows, similar to the case of the modulus of uniform Fej\'er monotonicity (see again the discussion in \cite{KLN2018}), such a $\zeta$ can be extracted from a proof of the corresponding property, provided that $p\in AF_k$ can be written in a purely universal way.
\end{remark}

\subsection{A quantitative convergence result}

The previously detailed moduli can now be combined to a first quantitative convergence result, generalizing the quantitative versions of the convergence of (quasi-)Fej\'er monotone sequences from Propositions \ref{pro:convResNormal} and Proposition \ref{pro:convRes}, that is Theorems 5.1, 5.3 and 6.4 of \cite{KLN2018}. The proof of the following theorem is, in that way, an extension of the proofs for these theorems presented in \cite{KLN2018}.

\medskip

For this, we just need to discuss some last minor assumptions and their quantitative versions. The first such assumption is the summability of the error terms which is quantitatively witnessed via a Cauchy rate $\xi$:

\begin{definition}
A function $\xi:\mathbb{N}\to\mathbb{N}$ is a Cauchy rate for $\sum\varepsilon_n<\infty$ if 
\[
\forall k\in\mathbb{N}\left(\sum_{i=\xi(k)}^\infty\varepsilon_i\leq\frac{1}{k+1}\right).
\]
\end{definition}

Further, inspired by the conditions on the varying distances exhibited in e.g.\ \cite{BC2021,CB2013,CB2014,Ngu2017}, we will require that the sequence of distance functions $(\phi_n)$ behaves suitably regular in the sense that the functions converge from below and are bounded from below in a certain sense. For the former, we will concretely assume that the sequence $(\phi_n)$ approximates another function $\phi$ from below with a rate $\pi$ in the following sense:

\begin{definition}
A function $\pi:\mathbb{N}\to\mathbb{N}$ is a rate of convergence for $\phi_n\nearrow\phi$ uniformly if
\[
\forall k\in\mathbb{N}\forall n\geq \pi(k)\forall x,y\in X\left(\phi_n(x,y)\leq \phi(x,y)+\frac{1}{k+1}\right).
\]
\end{definition}

For the latter, using the previous notion of comparison $\preceq^A$ of two distance functions via a given function $A$, we will assume that there exist functions $\psi:X\times X\to\mathbb{R}_+$ and $A:\mathbb{N}\to\mathbb{N}$ such that $\psi\preceq^A\phi_n$ for all $n$ (together with some further properties of $\psi$). This in particular encompasses the common assumption from the literature that $\alpha\psi(x,y)\leq \phi_n(x,y)$ for all $x,y\in X$ and some $\alpha>0$, as in that case, if $\phi_n(x,y)\leq 1/(A(k)+1)$ for 
\[
A(k)=(a+1)(k+1)\dotdiv 1
\]
with $\alpha\geq 1/(a+1)$ for some $a\in\mathbb{N}$, then
\[
\frac{1}{a+1}\psi(x,y)\leq \alpha\psi(x,y)\leq \phi_n(x,y)\leq\frac{1}{A(k)+1}\leq \frac{1}{(a+1)(k+1)}
\]
and thus $\psi(x,y)\leq 1/(k+1)$ which yields $\psi\preceq^A\phi_n$ for the specified $A$.

\medskip

In the context of the varying distances $\phi_n$, we also need to consider the further requirement on $f$ that $\lim f(n)=\infty$. For this, we rely on the following quantitative notion of a rate of divergence:

\begin{definition}
We say that $\kappa$ is a rate of divergence for $f$ if for any $L\in\mathbb{N}$, we get
\[
\forall n\geq\kappa(L)\left(f(n)\geq L\right).
\]
\end{definition}

Note that if $f$ is nondecreasing, then $\kappa$ is already a rate of divergence if $f(\kappa(L))\geq L$.

\begin{theorem}\label{thm:quantThmFirst}
Let $(X,d)$ be a metric space and $(\phi_n)$ be a sequence of functions $\phi_n:X\times X\to\mathbb{R}_+$. Assume $\phi_n\nearrow\phi$ uniformly with a rate of convergence $\pi$ and that $\psi\preceq^A\phi_n$ for some $A:\mathbb{N}\to\mathbb{N}$ where $\psi$ is uniformly weakly triangular with modulus $\theta$. Assume that $X$ has a modulus of total $\phi$-boundedness $\gamma$, that $\alpha_G$ is a $G$-modulus for $G$ and $\beta_H$ is a $H$-modulus for $H$ and that $f$ is nondecreasing and $f(n)\leq n$. Further, let $(x_n)\subseteq X$ be a sequence and 
\begin{enumerate}
\item $\chi$ be a modulus for uniform $(\phi_{2n})$-$(G,H)$-quasi-Fej\'er monotonicity w.r.t.\ $F$ and $(AF_k)$ for $(x_{2n})$ with errors $(\varepsilon_{n})$,
\item $\xi$ be a Cauchy rate for $\sum\varepsilon_{n}<\infty$,
\item $\Phi$ be a $\liminf$-bound w.r.t.\ $F$ and $(AF_k)$ for $(x_{2f(n)})$,
\item $\kappa$ be a rate of divergence for $f$,
\item $\zeta$ be a modulus of uniform $(\phi_n)$-$(G,H)$-quasi-$f$-monotonicity w.r.t.\ $F$ and $(AF_k)$ for $(x_{n})$ with errors $(\varepsilon_n)$.
\end{enumerate}
Then $(x_n)$ is $\psi$-Cauchy with a rate of metastability $\Psi(k,g)$ (to be defined below), i.e.\ for all $k\in\mathbb{N},g\in\mathbb{N}^\mathbb{N}$:
\[
\exists N\leq\Psi(k,g)\forall i,j\in [N;N+g(N)]\left(\psi(x_i,x_j)\leq\frac{1}{k+1}\right).
\] 
If $\psi$ is uniformly triangular with modulus $\theta$, then $(x_n)$ is Cauchy with the same rate of metastability $\Psi(k,g)$.

Here: $\Psi(k,g)=2\Psi_0(P,k,g)$ with 
\[
P=\gamma(\max\{2\alpha_G(4\beta_H(A(\theta(k)))+3)+1,2\alpha_G(2\beta_H(\alpha_G(4\beta_H(A(\theta(k)))+3))+1)+1\})
\]
and $\Psi_0$ defined by
\[
\begin{cases}
\Psi_0(0,k,g)=0,\\
\Psi_0(n+1,k,g)=\Phi(\eta^M(\Psi_0(n,k,g),4\beta_H(A(\theta(k)))+3),\hat{k}),
\end{cases}
\]
where
\begin{align*}
\hat{k}=\max\{&\kappa(\xi(4\beta_H(\alpha_G(4\beta_H(A(\theta(k)))+3))+3)),\xi(2\beta_H(A(\theta(k)))+1),\kappa(\xi(2\beta_H(A(\theta(k)))+1)),\\
&\kappa(\pi(\max\{2\alpha_G(2\beta_H(\alpha_G(4\beta_H(A(\theta(k)))+3))+1)+1,2\alpha_G(4\beta_H(A(\theta(k)))+3)+1\}))\}
\end{align*}
and
\[
\eta(n,r)=\max\{\chi_g(n,r),\zeta_g(n,r),\chi\left(f(n),n-f(n),4\beta_H(\alpha_G(r))+3\right)\}
\]
with
\begin{gather*}
\chi_g(n,r)=\chi\left(n,\floor*{\frac{g(2n)}{2}},r\right),\\
\zeta_g(n,r)=\zeta\left(n,\floor*{\frac{g(2n)}{2}},r\right).
\end{gather*}
\end{theorem}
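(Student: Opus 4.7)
The plan is to adapt the proof of Theorem 6.4 of \cite{KLN2018} for quasi-Fej\'er monotone sequences so as to accommodate simultaneously the variable distances $(\phi_n)$, the partial monotonicity (in which only the even subsequence is properly quasi-Fej\'er while the full sequence satisfies only the weaker $f$-monotonicity), and the reduction from $\phi_n$-closeness to $\psi$-closeness mediated by $\preceq^A$ and the uniform weak triangularity of $\psi$. At the abstract level the strategy is the familiar proof-mining template: use total boundedness (relative to $\phi$) on a carefully constructed finite list of iterates to produce, via the pigeonhole principle, two iterates that are $\phi$-close; locate a highly accurate approximate $F$-point near this pair via the $\liminf$-bound $\Phi$; use it as a common pivot and propagate closeness across a window of length $g$ by uniform quasi-Fej\'er monotonicity (and $f$-monotonicity, for the odd part); finally convert $\phi$-closeness to $\psi$-closeness through this pivot by triangular comparison.

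First I would define the candidate indices $\Psi_0(n,k,g)$ by recursion on $n$ up to $P$, where $P$ is the output of $\gamma$ at an accuracy tuned to the four reversal steps involving $\theta$, $A$, $\beta_H$, $\alpha_G$. Each recursion step wraps $\Phi(\cdot,\hat{k})$ around $M$-fold iterates of the auxiliary function $\eta$, which internally absorbs the three moduli $\chi_g$, $\zeta_g$ and an additional application of $\chi$ needed to pass from $x_{2f(n)}$ to $x_{2n}$ inside the window. The exponent $M$ must be large enough that, whichever pair the subsequent pigeonhole picks, the resulting propagation reaches across the entire metastability window $[N,N+g(N)]$. The parameter $\hat{k}$ is chosen so that, via the Cauchy rate $\xi$ and the rate of divergence $\kappa$ of $f$, all error tails starting at or after index $f(\hat{k})$ are below the target accuracy and, via $\pi$, the approximation $\phi_n \leq \phi + \text{slack}$ holds for all sufficiently late $n$ with the slack small enough to survive the subsequent composition of moduli.

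Second, applying the modulus of total $\phi$-boundedness $\gamma$ to the sequence $x_{2\Psi_0(0,k,g)},\ldots,x_{2\Psi_0(P,k,g)}$ yields indices $i<j\leq P$ with $\phi(x_{2\Psi_0(j,k,g)},x_{2\Psi_0(i,k,g)})$ below the prescribed target. Taking $N := 2\Psi_0(i,k,g)$ and using that the recursive definition of $\Psi_0(j,k,g)$ delivers an index $n^{*}$ at which $x_{2f(n^{*})}\in AF_{\hat{k}}$ together with $f(n^{*})\leq n^{*}$, I would use $p := x_{2f(n^{*})}$ as the pivot. Combining the $\phi$-closeness at stage $i$ with $\pi$ gives that $\phi_{2\Psi_0(i,k,g)}(p,x_{2\Psi_0(i,k,g)})$ is small, from which uniform $(\phi_{2n})$-$(G,H)$-quasi-Fej\'er monotonicity at $n=\Psi_0(i,k,g)$ (using $\chi_g$, $\alpha_G$, $\beta_H$ and $\xi$) yields $\phi_{2m}(p,x_{2m})\leq 1/(A(\theta(k))+1)$ for every even index $2m$ in the window; the parallel chain with $\zeta_g$ in place of $\chi_g$ handles the odd indices $2m+1$.

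From $\psi\preceq^{A}\phi_n$ each such bound transfers to $\psi(p,x_r)\leq 1/(\theta(k)+1)$ for all $r\in[N,N+g(N)]$, and uniform weak triangularity of $\psi$ with modulus $\theta$ then gives $\psi(x_r,x_{r'})\leq 1/(k+1)$ for all $r,r'\in[N,N+g(N)]$, which is the claimed $\psi$-metastability. Under uniform triangularity of $\psi$ with the same modulus $\theta$, the very same chain concludes $d(x_r,x_{r'})\leq 1/(k+1)$ with the rate $\Psi(k,g)$ unchanged. The main obstacle is the accounting of accuracies: the four moduli $\alpha_G$, $\beta_H$, $A$, $\theta$ must be composed in precisely the correct order to reverse each quantitative reformulation at its own stage, and $\pi,\kappa$ must be threaded through $\hat{k}$ so that the variable distances $\phi_n$ can be uniformly replaced by $\phi$ without losing the target accuracy. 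The specific combinations in which $M$ and $\hat{k}$ appear in the definition of $\Psi_0$ in the statement are exactly what is forced by this bookkeeping.
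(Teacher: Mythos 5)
Your outline captures the right high-level structure — total $\phi$-boundedness plus pigeonhole, an approximate $F$-point pivot supplied by the $\liminf$-bound, two-stage propagation (from the $2f(\cdot)$-index to the $2n$-index and then across the metastability window), conversion of $\phi_n$-closeness to $\psi$-closeness through $\preceq^A$ and weak triangularity — and you correctly identify the need for the extra application of $\chi$ to pass from $x_{2f(n)}$ to $x_{2n}$, which is the third component of $\eta$. However, there is a genuine mistake in the pigeonhole step that breaks the chain of inequalities as you have written it.

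You apply $\gamma$ to the sequence $x_{2\Psi_0(0,k,g)},\dots,x_{2\Psi_0(P,k,g)}$, obtaining $\phi(x_{2\Psi_0(j,k,g)},x_{2\Psi_0(i,k,g)})$ small, but then take the pivot $p := x_{2f(n^*)}$ and claim that the $\phi$-closeness delivers $\phi_{2\Psi_0(i,k,g)}(p,x_{2\Psi_0(i,k,g)})$ small. This does not follow: what the pigeonhole gives you is a smallness statement between two points of the form $x_{2n}$, neither of which coincides with $p=x_{2f(n^*)}$, and there is no mechanism (Fej\'er monotonicity only constrains distances to points of $F$) to transfer $\phi$-closeness from $x_{2n_J}$ to $x_{2f(n_J)}$. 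The paper instead applies $\gamma$ to $x_{2f(n_0)},\dots,x_{2f(n_P)}$, precisely so that the pigeonhole picks two of the \emph{$f$-shifted} iterates, say $x_{2f(n_I)}$ and $x_{2f(n_J)}$ with $I<J$: the latter is by construction an $\eta^M(n_{J-1},\cdot)$-approximate (hence $\eta^M(n_I,\cdot)$-approximate) $F$-point and can serve as the pivot $p$, while the former is the anchor for the two-stage propagation. With the smallness landing on $\phi(p,x_{2f(n_I)})$, one then converts (via $\pi$) to $\phi_{2f(n_I)}(p,x_{2f(n_I)})$, applies $\chi$ at base $f(n_I)$ with $m=n_I-f(n_I)$ to reach $\phi_{2n_I}(p,x_{2n_I})$, and only then propagates forward over the window with $\chi_g$ (even steps) and $\zeta_g$ (odd steps). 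As written, your argument conflates the two index families and the resulting propagation has no valid starting point; the fix is simply to feed the $x_{2f(\cdot)}$-iterates to $\gamma$ rather than the $x_{2(\cdot)}$-iterates, after which your outline goes through.
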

\begin{proof}
Set $\varphi(k,n)=\min\{m\geq n\mid x_{2f(m)}\in AF_k\}$ which is well-defined as $(x_{2f(n)})_n$ has the $\liminf$-property w.r.t.\ $F$. Let $(n_i)_i$ defined by $n_0=0$ and
\[
n_{i+1}=\varphi(\eta^M(n_i,4\beta_H(A(\theta(k)))+3),\hat{k}).
\]
Then for any $j\geq 1$ and any $0\leq i<j$, $x_{2f(n_j)}$ is an $\eta^M(n_i,4\beta_H(A(\theta(k)))+3)$-approximate $F$-point as by definition, $x_{2f(x_j)}$ is an $\eta^M(n_{j-1},4\beta_H(A(\theta(k)))+3)$-approximate-$F$-point and $n_i\leq n_{j-1}$ yields $\eta^M(n_{j-1},4\beta_H(A(\theta(k)))+3)\geq \eta^M(n_i,4\beta_H(A(\theta(k)))+3)$.

\medskip

Further, by the definition of $P$ and as $X$ is totally $\phi$-bounded with modulus $\gamma$, we have that $\exists 0<I<J\leq P$ (with $>0$ through the additional $+1$ in the definition of $P$) such that 
\[
\phi(x_{2f(n_J)},x_{2f(n_I)})\leq\frac{1}{2\alpha_G(2\beta_H(\alpha_G(4\beta_H(A(\theta(k)))+3))+1)+2}.
\]
As we have
\[
n_I\geq \hat{k}\geq \kappa(\pi(\max\{2\alpha_G(2\beta_H(\alpha_G(4\beta_H(A(\theta(k)))+3))+1)+1,2\alpha_G(4\beta_H(A(\theta(k)))+3)+1\}))
\]
this yields
\[
\phi_{2f(n_I)}(x_{2f(n_J)},x_{2f(n_I)})\leq\frac{1}{\alpha_G(2\beta_H(\alpha_G(4\beta_H(A(\theta(k)))+3))+1)+1}
\]
and thus we have
\[
G(\phi_{2f(n_I)}(x_{2f(n_J)},x_{2f(n_I)}))\leq\frac{1}{2\beta_H(\alpha_G(4\beta_H(A(\theta(k)))+3))+2}.
\]
By the above first result, we have that $x_{2f(n_J)}$ is an $\eta^M(n_I,4\beta_H(A(\theta(k)))+3)$-approximate $F$-point. Therefore, for
\[
l\leq n_I-f(n_I), 
\]
we get 
\[
H(\phi_{2f(n_I)+2l}(x_{2f(n_J)},x_{2f(n_I)+2l}))\leq\frac{1}{\beta_H(\alpha_G(4\beta_H(A(\theta(k)))+3))+1}.
\]
To see that, note that
\[
\eta^M(n_I,4\beta_H(A(\theta(k)))+3)\geq\chi\left(f(n_I),n_I-f(n_I),4\beta_H(\alpha_G(4\beta_H(A(\theta(k)))+3))+3\right)
\]
and as $x_{2f(n_J)}$ is an $\eta^M(n_I,4\beta_H(A(\theta(k)))+3)$-approximate $F$-point and $f(n_I)\geq\xi(4\beta_H(\alpha_G(4\beta_H(A(\theta(k)))+3))+3)$ since $n_I\geq\kappa(\xi(4\beta_H(\alpha_G(4\beta_H(A(\theta(k)))+3))+3))$, we get that 
\begin{align*}
&H(\phi_{2f(n_I)+2l}(x_{2f(n_J)},x_{2f(n_I)+2l}))\\
&\qquad\qquad\qquad= H(\phi_{2(f(n_I)+l)}(x_{2f(n_J)},x_{2(f(n_I)+l)}))\\
&\qquad\qquad\qquad\leq G(\phi_{2f(n_I)}(x_{2f(n_J)},x_{2f(n_I)}))+\sum_{i=f(n_I)}^{\infty}\varepsilon_i+\frac{1}{4\beta_H(\alpha_G(4\beta_H(A(\theta(k)))+3))+4}\\
&\qquad\qquad\qquad\leq \frac{1}{\beta_H(\alpha_G(4\beta_H(A(\theta(k)))+3))+1}.
\end{align*}
Now, as $f(n)\leq n$, we in particular have that $2f(n_I)\leq 2n_I$ and thus $2n_I=2f(n_I)+2l$ for $l=n_I-f(n_I)$. Thus we in particular have
\[
H(\phi_{2n_I}(x_{2f(n_J)},x_{2n_I}))\leq \frac{1}{\beta_H(\alpha_G(4\beta_H(A(\theta(k)))+3))+1}.
\]
Thus, in particular
\[
\phi_{2n_I}(x_{2f(n_J)},x_{2n_I})\leq\frac{1}{\alpha_G(4\beta_H(A(\theta(k)))+3)+1}
\]
as $\beta_H$ is an $H$-modulus and as $\alpha_G$ is a $G$-modulus, we get
\[
G(\phi_{2n_I}(x_{2f(x_J)},x_{2n_I}))\leq\frac{1}{4\beta_H(A(\theta(k)))+4}.
\]

\medskip

Now, also by definition of $P$, note that this same pair $(x_{2f(n_J)},x_{2f(n_I)})$ satisfies
\[
\phi(x_{2f(n_J)},x_{2f(n_I)})\leq\frac{1}{2\alpha_G(4\beta_H(A(\theta(k)))+3)+2}.
\]
As we have
\[
n_I\geq\hat{k}\geq \kappa(\pi(\max\{2\alpha_G(2\beta_H(\alpha_G(4\beta_H(A(\theta(k)))+3))+1)+1,2\alpha_G(4\beta_H(A(\theta(k)))+3)+1\})),
\]
this yields
\[
\phi_{2f(n_I)}(x_{2f(n_J)},x_{2f(n_I)})\leq\frac{1}{\alpha_G(4\beta_H(A(\theta(k)))+3)+1}
\]
and thus
\[
G(\phi_{2f(n_I)}(x_{2f(n_J)},x_{2f(n_I)}))\leq\frac{1}{4\beta_H(A(\theta(k)))+4}.
\]
We can then show that for any $l\leq g(2n_I)$
\[
H(\phi_{2n_I+l}(x_{2f(n_J)},x_{2n_I+l}))\leq\frac{1}{\beta_H(A(\theta(k)))+1}.
\]

\medskip

To see that, assume first that $l$ is even, i.e.\ $l=2l'$. Since
\[
\eta^M(n_I,r)\geq\chi\left(n_I,\floor*{\frac{g(2n_I)}{2}},r\right)
\]
and as $x_{2f(n_J)}$ is an $\eta^M(n_I,4\beta_H(A(\theta(k)))+3)$-approximate $F$-point and since
\[
l\leq g(2n_I)\text{ implies }l'\leq\floor*{\frac{g(2n_I)}{2}},
\]
we get that 
\begin{align*}
H(\phi_{2n_I+l}(x_{2f(n_J)},x_{2n_I+l}))&= H(\phi_{2(n_I+l')}(x_{2f(n_J)},x_{2(n_I+l')}))\\
&\leq G(\phi_{2n_I}(x_{2f(n_J)},x_{2n_I}))+\sum_{i=n_I}^\infty\varepsilon_i+\frac{1}{4\beta_H(A(\theta(k)))+4}\\
&\leq \frac{1}{\beta_H(A(\theta(k)))+1}
\end{align*}
where we have used $n_I\geq\xi(2\beta_H(A(\theta(k)))+1)$.

\medskip

Now, assume that $l$ is odd, i.e.\ $l=2l'+1$. Then, since
\[
\eta^M(n_I,r)\geq\zeta\left(n_I,\floor*{\frac{g(n_I)}{2}},r\right)
\]
and as $x_{2f(n_J)}$ is an $\eta^M(n_I,4\beta_H(A(\theta(k)))+3)$-approximate $F$-point and since
\[
l\leq g(2n_I)\text{ implies }l'\leq\floor*{\frac{g(2n_I)}{2}}
\]
as before, we get that 
\begin{align*}
H(\phi_{2n_I+l}(x_{2f(n_J)},x_{2n_I+l}))&= H(\phi_{2(n_I+l')+1}(x_{2f(n_J)},x_{2(n_I+l')+1}))\\
&\leq G(\phi_{2f(n_I)}(x_{2f(n_J)},x_{2f(n_I)}))+\sum_{i=f(n_I)}^\infty\varepsilon_i+\frac{1}{4\beta_H(A(\theta(k)))+4}\\
&\leq \frac{1}{\beta_H(A(\theta(k)))+1}
\end{align*}
where we have used $n_I\geq\kappa(\xi(2\beta_H(A(\theta(k)))+1))$ which implies $f(n_I)\geq\xi (2\beta_H(A(\theta(k)))+1)$.

\medskip

As $\beta_H$ is a $H$-modulus, we get
\[
\phi_{2n_I+l}(x_{2f(n_J)},x_{2n_I+l})\leq\frac{1}{A(\theta(k))+1}
\]
for all $l\leq g(2n_I)$ and thus
\[
\psi(x_{2f(n_J)},x_{2n_I+l})\leq \frac{1}{\theta(k)+1}
\]
for all $l\leq g(2n_I)$. Thus, for $m,l\in [2n_I,2n_I+g(2n_I)]$, we get
\[
\psi(x_m,x_l)\leq\frac{1}{k+1}
\]
which yields the result for $N=2n_I$. If $\psi$ is uniformly triangular with modulus $\theta$, then it similarly follows that for $m,l\in [2n_I,2n_I+g(2n_I)]$:
\[
d(x_m,x_l)\leq\frac{1}{k+1}.
\]
Note that we in any way have $n_i\leq\Psi_0(i,k,g)$ for all $i$ (using the monotonicity of $\Phi$) and thus $N\leq 2\Psi_0(I,k,g)\leq 2\Psi_0(P,k,g)$.
\end{proof}

This quantitative result can now be extended to include the moduli of uniform closedness for a stronger conclusion (similar to Theorem 5.3 of \cite{KLN2018}) that actually guarantees that the resulting limit of the sequence belongs to the solution set $F$.

\begin{theorem}\label{thm:quantThmSecond}
In addition to the assumptions of Theorem \ref{thm:quantThmFirst}, assume that $F$ is uniformly $\psi$-closed with moduli $\delta$ and $\omega$. Then for all $k\in\mathbb{N}$ and all $g\in \mathbb{N}^\mathbb{N}$:
\[
\exists N\leq\tilde{\Psi}(k,g)\forall i,j\in [N;N+g(N)]\left(\psi(x_i,x_j)\leq\frac{1}{k+1}\text{ and }x_i\in AF_k\right),
\] 
where $\tilde{\Psi}(k,g)=2\tilde{\Psi}_0(P,k,g)$ and where $\tilde{\Psi}_0$ is defined similar to $\Psi_0$ in Theorem \ref{thm:quantThmFirst} with 
\[
\tilde{\eta}_{k,\delta}(n,r)=\max\{\delta(k),\eta(n,r)\}
\]
instead of $\eta$ and with $\tilde{\theta}(k)=\max\{\theta(k),\omega(k)\}$ instead of $\theta$.

If $\psi$ is instead uniformly triangular with a modulus $\theta$, then for all $k\in\mathbb{N}$ and all $g\in \mathbb{N}^\mathbb{N}$:
\[
\exists N\leq\tilde{\Psi}(k,g)\forall i,j\in [N;N+g(N)]\left(d(x_i,x_j)\leq\frac{1}{k+1}\text{ and }x_i\in AF_k\right)
\] 
with the same $\tilde{\Psi}$.
\end{theorem}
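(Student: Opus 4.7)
The plan is to run the proof of Theorem \ref{thm:quantThmFirst} essentially verbatim, with two targeted modifications that strengthen the conclusion to also witness membership in $AF_k$. Recall that the critical payoff of that earlier proof was the production of a witness $x_{2f(n_J)}$ together with an index $n_I$ such that
\[
\psi(x_{2f(n_J)},x_{2n_I+l})\leq\frac{1}{\theta(k)+1}
\]
for all $l\leq g(2n_I)$, from which weak triangularity of $\psi$ yields $\psi$-closeness on $[2n_I;2n_I+g(2n_I)]$. To additionally conclude $x_{2n_I+l}\in AF_k$, I will force this same witness to simultaneously satisfy $x_{2f(n_J)}\in AF_{\delta(k)}$ and to be $\psi$-close at precision $1/(\omega(k)+1)$, so that uniform $\psi$-closedness delivers the membership conclusion directly.

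The first modification is replacing $\eta$ by $\tilde{\eta}_{k,\delta}(n,r)=\max\{\delta(k),\eta(n,r)\}$ in the recursive construction of $n_{i+1}=\varphi(\tilde{\eta}_{k,\delta}^M(n_i,4\beta_H(A(\theta(k)))+3),\hat{k})$. Since $AF_m\supseteq AF_{m+1}$, the max with $\delta(k)$ guarantees that for every $j\geq 1$ and every $0\leq i<j$, the point $x_{2f(n_j)}$ not only is an $\tilde{\eta}_{k,\delta}^M(n_i,\cdot)$-approximate $F$-point in the original sense but in particular lies in $AF_{\delta(k)}$. The second modification is replacing $\theta$ by $\tilde{\theta}$ in the argument of $\Psi$, ensuring that the very same chain of estimates that produced the above $\psi$-bound now produces
\[
\psi(x_{2f(n_J)},x_{2n_I+l})\leq\frac{1}{\tilde{\theta}(k)+1},
\]
which is a precision strong enough to invoke both the weak triangularity (yielding $\psi(x_i,x_j)\leq 1/(k+1)$ as before) and the uniform $\psi$-closedness (yielding $x_{2n_I+l}\in AF_k$ via the witness $x_{2f(n_J)}\in AF_{\delta(k)}$).

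With these two ingredients fixed, the remainder of the proof is a transcription of the one for Theorem \ref{thm:quantThmFirst}: one selects $0<I<J\leq P$ via the modulus $\gamma$ of total $\phi$-boundedness, passes from $\phi$ to $\phi_{2f(n_I)}$ using the rate $\pi$, invokes the modulus $\chi$ of uniform quasi-Fej\'er monotonicity on the even subsequence to bound $H(\phi_{2n_I}(x_{2f(n_J)},x_{2n_I}))$, and then uses $\chi$ again (for even $l$) and $\zeta$ (for odd $l$) along with the Cauchy rate $\xi$ and the rate of divergence $\kappa$ to propagate the bound to all $l\leq g(2n_I)$. The case where $\psi$ is uniformly triangular is treated in the exact same way, swapping weak triangularity for triangularity at the final step to obtain $d(x_i,x_j)\leq 1/(k+1)$ on the metastability interval. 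The main difficulty is merely bookkeeping: one must check that the extra $\max$ with $\delta(k)$ inside $\tilde{\eta}_{k,\delta}$ and the tightened precision $\tilde{\theta}(k)$ do not disturb any of the inequalities used in the original chain of estimates, which they do not, since each replacement only tightens a hypothesis without altering its form.
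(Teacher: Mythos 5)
Your proposal matches the paper's proof essentially verbatim: both modifications — taking the $\max$ with $\delta(k)$ inside the recursive index construction to force the witness $x_{2f(n_J)}$ into $AF_{\delta(k)}$, and tightening the final $\psi$-precision via $\tilde{\theta}$ so that the same estimate simultaneously triggers uniform $\psi$-closedness — are exactly what the paper does, after which the remainder is a transcription of the proof of Theorem~\ref{thm:quantThmFirst}. One small note for accuracy: for your claim that $1/(\tilde{\theta}(k)+1)$ is ``a precision strong enough to invoke both'' properties you implicitly need $\tilde{\theta}(k)\geq\max\{\theta(k),\omega(k)\}$, which conflicts with the stated $\tilde{\theta}(k)=\min\{\theta(k),\omega(k)\}$; the paper's own proof makes the identical silent assumption (it asserts $1/(\tilde{\theta}(k)+1)\leq 1/(\omega(k)+1)$ and that $\tilde{\theta}$ is still a modulus of weak triangularity), so this is best read as a typo, with $\min$ intended to be $\max$.
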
 
\begin{proof}
Note that $\tilde{\theta}$ is also a modulus of uniform weak triangularity for $\psi$. Thus, as in the proof of Theorem \ref{thm:quantThmFirst}, we obtain 
\[
\exists N\leq\tilde{\Psi}(k,g)\forall i,j\in [N;N+g(N)]\left( \psi(x_i,x_j)\leq\frac{1}{k+1}\right)
\]
and, more precisely, that there exists $0<I<J\leq P$ such that $N=2n_I$ and
\[
\psi(x_{2f(n_J)},x_{2n_I+l})\leq \frac{1}{\tilde{\theta}(k)+1}\leq\frac{1}{\omega(k)+1}
\]
for all $l\leq g(2n_I)$ and that $x_{2f(n_J)}$ is an $\tilde{\eta}^M_{k,\delta}(n_I,4\beta_H(A(\theta(k)))+3)$-approximate $F$-point. As $\tilde{\eta}^M_{k,\delta}(n_I,4\beta_H(A(\theta(k)))+3)\geq \delta(k)$, we have that $x_{2f(n_J)}$ is a $\delta(k)$-approximate $F$-point. Thus, we get $x_{2n_I+l}\in AF_k$ for any $l\leq g(n_I)$.

If $\theta$ is a modulus of uniform triangularity, then $\tilde{\theta}$ is also such a modulus and the resulting claim follows likewise.
\end{proof}

With a similar reasoning as in \cite{KLN2018}, in both results above it is actually sufficient to have a rate of metastability for the summability of the error sequence $(\varepsilon_{n})$ instead of a full Cauchy rate. Similarly, a rate of metastability for the uniform convergence $\phi_n\nearrow\phi$ would be sufficient but for simplicity, we omit this here. Note also that we only for simplicity assume that the error sequences for the quasi-Fej\'er monotonicity and the quasi-$f$-monotonicity coincide as in the case of different error sequences $(\varepsilon_n)$ and $(\varepsilon'_n)$, constructing $(\varepsilon_n+\varepsilon'_n)$ provides a common error sequence for which a Cauchy rate can immediately be constructed by combining the two separate Cauchy rates.\\

Already from Theorem \ref{thm:quantThmSecond}, we would now be able to infer a ``plain'' convergence result for generalized Fej\'er monotone sequences in the spirit of Proposition \ref{pro:convRes}. However, this result would only take place over totally $\phi$-bounded spaces and we would have to require that all mappings are $\phi$-continuous. As both notions could be rather hard to verify in practice, we refrain from spelling this out here and consider the main ``plain'' convergence result to be that contained in Theorem \ref{thm:mainRes} which infers the normal convergence for the sequence under an additional mild assumption connecting $\phi$ and the usual metric $d$.

\begin{remark}\label{rem:bdcomp}
In the context of $(\phi_n)$-Fej\'er monotone sequences w.r.t.\ a set $F$, it holds that
\[
\phi_{n+1}(z,x_{n+1})\leq \phi_n(z,x_n)\leq \dots\leq \phi_0(z,x_0)
\]
for $z\in F$ and any $n$. Now, if $\phi_n\to\phi$ holds uniformly, i.e. 
\[
\forall k\in\mathbb{N}\exists N\in\mathbb{N}\forall n\geq N\forall x,y\in X\left(\vert\phi_n(x,y)-\phi(x,y)\vert\leq \frac{1}{k+1}\right),
\] 
then for large enough $n$ we get $\phi(z,x_n)\leq\phi_0(z,x_0)+1$ and thus we get that $x_n$ is $\phi$-bounded. This naturally extends to the various extensions like including $G,H$ or errors as well as partiality.

In this case where the sequence $(x_n)$ is contained in a $\phi$-bounded set, i.e.\ 
\[
(x_n)\subseteq\{ y\in X\mid \phi(z,y)\leq\alpha\}
\]
for some $z\in X$ and $\alpha>0$, the above results naturally extend to spaces which are $\phi$-boundedly compact, i.e.\ where for any $z\in X$ and $\alpha>0$, the set
\[
\{ y\in X\mid \phi(z,y)\leq\alpha\}
\]
is $\phi$-compact.
\end{remark}

\subsection{Some special cases}

The first special case that we want to discuss is when all $\phi_n$ coincide with a single $\phi$. In that case, we can not only simplify the resulting bounds but, in the absence of errors in the Fej\'er monotonicity, we can even weaken the assumption of the $\liminf$-property to that of just approximate $F$-points. This would not have been possible before as, beyond the treatment of errors, the $\liminf$-property was needed to deal with the assumption that $\phi_n\nearrow\phi$.

\medskip

Concretely, instantiating the previous results, we get the following: 

\begin{theorem}\label{thm:metaSingleDist}
Let $(X,d)$ be a metric space and $\phi$ be a function $\phi:X\times X\to\mathbb{R}_+$. Assume that $\psi\preceq^A\phi$ for some $A:\mathbb{N}\to\mathbb{N}$ where $\psi$ is uniformly weakly triangular with modulus $\theta$. Assume that $X$ has a modulus of total $\phi$-boundedness $\gamma$, that $\alpha_G$ is a $G$-modulus for $G$ and $\beta_H$ is a $H$-modulus for $H$ and that $f$ is nondecreasing and $f(n)\leq n$. Further, let $(x_n)\subseteq X$ be a sequence and 
\begin{enumerate}
\item $\chi$ be a modulus for uniform $\phi$-$(G,H)$-quasi-Fej\'er monotonicity w.r.t.\ $F$ and $(AF_k)$ for $(x_{2n})$ with errors $(\varepsilon_{n})$,
\item $\xi$ be a Cauchy rate for $\sum\varepsilon_{n}<\infty$,
\item $\Phi$ be a $\liminf$-bound w.r.t.\ $F$ and $(AF_k)$ for $(x_{2f(n)})$,
\item $\kappa$ be a rate of divergence for $f$,
\item $\zeta$ be a modulus of uniform $\phi$-$(G,H)$-quasi-$f$-monotonicity w.r.t.\ $F$ and $(AF_k)$ for $(x_n)$ with errors $(\varepsilon_n)$.
\end{enumerate}
Then $(x_n)$ is $\psi$-Cauchy with a rate of metastability $\Psi(k,g)$ (to be defined below), i.e.\ for all $k\in\mathbb{N},g\in\mathbb{N}^\mathbb{N}$:
\[
\exists N\leq\Psi(k,g)\forall i,j\in [N;N+g(N)]\left(\psi(x_i,x_j)\leq\frac{1}{k+1}\right).
\]
If $\psi$ is uniformly triangular with modulus $\theta$, then $(x_n)$ is Cauchy with the same rate of metastability $\Psi(k,g)$.

Here: $\Psi(k,g)=2\Psi_0(P,k,g)$ with $\Psi_0$ defined as in Theorem \ref{thm:quantThmFirst}, now with 
\begin{align*}
\hat{k}=\max\{&\kappa(\xi(4\beta_H(\alpha_G(4\beta_H(A(\theta(k)))+3))+3)),\xi(2\beta_H(A(\theta(k)))+1),\kappa(\xi(2\beta_H(A(\theta(k)))+1))\}.
\end{align*}

If in addition to the above assumptions, $F$ is uniformly $\psi$-closed with modulus $\delta$ and $\omega$, then for all $k\in\mathbb{N}$ and all $g\in \mathbb{N}^\mathbb{N}$:
\[
\exists N\leq\tilde{\Psi}(k,g)\forall i,j\in [N;N+g(N)]\left(\psi(x_i,x_j)\leq\frac{1}{k+1}\text{ and }x_i\in AF_k\right),
\] 
where $\tilde{\Psi}(k,g)=2\tilde{\Psi}_0(P,k,g)$ and where $\tilde{\Psi}_0$ is defined similar to $\Psi_0$ before but with
\[
\tilde{\eta}_{k,\delta}(n,r)=\max\{\delta(k),\eta(n,r)\}
\]
instead of $\eta$ and with $\tilde{\theta}(k)=\max\{\theta(k),\omega(k)\}$ instead of $\theta$. As before, if $\psi$ is actually uniformly triangular with a modulus $\theta$, then for all $k\in\mathbb{N}$ and all $g\in \mathbb{N}^\mathbb{N}$:
\[
\exists N\leq\tilde{\Psi}(k,g)\forall i,j\in [N;N+g(N)]\left(d(x_i,x_j)\leq\frac{1}{k+1}\text{ and }x_i\in AF_k\right),
\] 

Lastly, if in any of the cases, instead of (1) and (5) we even have
\begin{enumerate}
\item [(1)'] $\chi$ is a modulus for uniform $\phi$-$(G,H)$-Fej\'er monotonicity w.r.t.\ $F$ and $(AF_k)$ for $(x_{2n})$,
\item [(5)'] $\zeta$ is a modulus of uniform $\phi$-$(G,H)$-$f$-monotonicity w.r.t.\ $F$ and $(AF_k)$ for $(x_n)$,
\end{enumerate}
then $\xi$ and $\kappa$ from assumptions (2) and (4) can be omitted and it suffices to assume that
\begin{enumerate}
\item [(3)'] $\Phi$ is an approximate $F$-point bound w.r.t.\ $(AF_k)$ for $(x_{2f(n)})$,
\end{enumerate}
and we get the same conclusions, respectively, with a rate $\hat{\Psi}(k,g)=2\hat{\Psi}_0(P,k,g)$ and where $\hat{\Psi}_0$ is defined as 
\[
\begin{cases}
\hat{\Psi}_0(0,k,g)=0,\\
\hat{\Psi}_0(n+1,k,g)=\Phi(\eta^M(\hat{\Psi}_0(n,k,g),4\beta_H(A(\theta(k)))+3)),
\end{cases}
\]
with the other constants defined as before.
\end{theorem}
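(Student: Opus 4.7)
The plan is to derive this theorem directly from Theorems \ref{thm:quantThmFirst} and \ref{thm:quantThmSecond} by specialization and then to give a separate (but closely parallel) argument for the strengthened version under assumptions (1)', (3)' and (5)'.

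First, for the basic $\Psi$-bound and the strengthened $\tilde{\Psi}$-bound: when all $\phi_n$ coincide with a single function $\phi$, the convergence $\phi_n \nearrow \phi$ is trivial and admits the zero function as a uniform rate of convergence $\pi$. Substituting $\pi \equiv 0$ into the definition of $\hat{k}$ from Theorem \ref{thm:quantThmFirst} makes the fourth argument of the outer $\max$ collapse to $\kappa(0)$, which (after a harmless monotonization using that $\kappa$ may be taken nondecreasing and the other $\kappa(\cdot)$ arguments dominate it) is subsumed by the remaining three terms. This yields exactly the simplified $\hat{k}$ stated here, and the rest of $\Psi$ and $\tilde{\Psi}$ is unchanged. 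Hence both claims follow directly by invoking Theorems \ref{thm:quantThmFirst} and \ref{thm:quantThmSecond} with this choice of $\pi$.

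The main task is therefore the third (errorless) part. The key observation is that the assumptions (1)', (5)' describe the error-free situation, so the tail sums $\sum_{i=f(n_I)}^{\infty}\varepsilon_i$ and $\sum_{i=n_I}^{\infty}\varepsilon_i$ that appeared throughout the proof of Theorem \ref{thm:quantThmFirst} simply vanish. Consequently no Cauchy rate $\xi$ is needed; and since $\xi$ was the only reason to require $f(n_I)$ and $n_I$ to exceed certain thresholds, the rate of divergence $\kappa$ of $f$ also becomes unnecessary. In particular, the sole role of the $\liminf$-property in the previous proof was to provide approximate $F$-points at indices $\geq n$ so that $n_I$ could be taken large enough to invoke the Cauchy rate on the error tail. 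Without errors, the argument only needs the mere \emph{existence} of an approximate $F$-point at each level of accuracy, which is precisely what an approximate $F$-point bound $\Phi$ in the sense of (3)' provides.

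With these simplifications, I repeat the proof of Theorem \ref{thm:quantThmFirst} verbatim but replace $\varphi(k,n) = \min\{m \geq n \mid x_{2f(m)} \in AF_k\}$ by the constant choice $\varphi(k) \leq \Phi(k)$ that a single $\Phi(k)$-bounded approximate $F$-point exists, define the iteration
\[
\hat{n}_{i+1} = \Phi(\tilde{\eta}^M(\hat{n}_i, 4\beta_H(A(\theta(k)))+3))
\]
(using the appropriate $\eta$ or $\tilde{\eta}_{k,\delta}$ depending on whether uniform $\psi$-closedness is assumed), and apply the modulus of total $\phi$-boundedness $\gamma$ to the resulting finite sequence $(x_{2f(\hat{n}_i)})_{i \leq P}$ to extract $0 < I < J \leq P$ with $\phi(x_{2f(\hat{n}_J)}, x_{2f(\hat{n}_I)})$ small. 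The rest of the computation, namely passing through the moduli $\alpha_G, \beta_H, \chi, \zeta$ to bound $H(\phi(x_{2f(\hat{n}_J)}, x_{2\hat{n}_I + l}))$ for $l \leq g(2\hat{n}_I)$, goes through unchanged because all summation terms are $0$. The resulting bound on $N = 2\hat{n}_I \leq 2\hat{\Psi}_0(P,k,g)$ gives the stated $\hat{\Psi}$. The minor point to be careful about is that, without errors and without varying $\phi_n$, the compactness-extraction step using $\gamma$ is the only place where largeness of $n_I$ entered; this is what allows the shift from a $\liminf$-bound to a plain approximate $F$-point bound, and I expect this to be the only nontrivial bookkeeping in writing out the proof.
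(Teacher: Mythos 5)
Your proposal is correct and takes essentially the same route as the paper, which simply omits the proof as ``immediate'': specialize Theorems \ref{thm:quantThmFirst} and \ref{thm:quantThmSecond} to $\phi_n\equiv\phi$, and for the errorless variant re-run the argument with all error tails replaced by $0$. One small stylistic point: rather than invoking $\pi\equiv 0$ and a monotonization of $\kappa$ to argue that $\kappa(0)$ is dominated by the other terms, the more direct observation is that the only place $\kappa(\pi(\cdot))$ entered the proof of Theorem \ref{thm:quantThmFirst} was to force $f(n_I)\geq\pi(\cdot)$ so that $\phi_{2f(n_I)}(\cdot,\cdot)\leq\phi(\cdot,\cdot)+\tfrac{1}{2c+2}$, an inequality that is an identity when $\phi_n\equiv\phi$; hence the term is simply unnecessary, no monotonization required. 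Your treatment of the errorless case is right: with the tail sums gone, the only role of the $\liminf$-bound was to produce approximate $F$-points at indices $\geq\hat{k}$, and with $\hat{k}$ effectively $0$ a plain approximate $F$-point bound for $(x_{2f(n)})$ suffices; the monotonicity of the sequence $(n_i)$ is still guaranteed by the nestedness of the $AF_k$ and monotonicity of $\eta^M$, so the combinatorial extraction via $\gamma$ goes through unchanged.
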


The proof is immediate and we thus omit it.\\

Further, we now want to indicate how this result encompasses the previous quantitative results for plain Fej\'er monotone sequences. For that, we first consider how a modulus of uniform Fej\'er monotonicity for the full sequence actually can be used to construct the other relevant moduli for the uniform partial Fej\'er monotonicity.

\begin{proposition}
Suppose $(x_n)$ is uniformly $(\phi_n)$-$(G,H)$-quasi-Fej\'er monotone w.r.t.\ $F$ and $(AF_k)$ with errors $(\varepsilon_n)$ and a modulus $\chi$. Define $\widetilde{\varepsilon}_n=\varepsilon_{2n}+\varepsilon_{2n+1}$. Then $(x_{2n})$ is uniformly $(\phi_{2n})$-$(G,H)$-quasi-Fej\'er monotone w.r.t.\ $F$ and $(AF_k)$ with errors $(\widetilde{\varepsilon}_n)$ and a modulus
\[
\chi'(n,m,r)=\chi(2n,2m,r)
\]
and $(x_n)$ is uniformly $(\phi_n)$-$(G,H)$-quasi-$\mathrm{id}$-monotone w.r.t.\ $F$ and $(AF_k)$ with errors $(\widetilde{\varepsilon}_n)$ and a modulus
\[
\eta(n,m,r)=\chi(2n,2m+1,r).
\]
\end{proposition}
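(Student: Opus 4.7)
The plan is to unfold both target properties directly from the definitions, apply the given uniform $(\phi_n)$-$(G,H)$-quasi-Fej\'er monotonicity of $(x_n)$ with carefully chosen parameters, and then re-index the sums defining the error terms.

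For claim (1), I unfold the required property: for each $r,n,m$, I need some $k$ such that for all $p\in AF_k$ and all $l\le m$,
\[
H(\phi_{2(n+l)}(p,x_{2(n+l)}))<G(\phi_{2n}(p,x_{2n}))+\sum_{i=n}^{n+l-1}\widetilde{\varepsilon}_i+\frac{1}{r+1}.
\]
I would apply the hypothesis on $(x_n)$ at indices $(r,2n,2m)$, which produces a $k=\chi(r,2n,2m)$ valid for all $l'\le 2m$. Restricting attention to even values $l'=2l$ with $l\le m$ yields exactly the left-hand side above, and since
\[
\sum_{i=2n}^{2n+2l-1}\varepsilon_i=\sum_{i=n}^{n+l-1}(\varepsilon_{2i}+\varepsilon_{2i+1})=\sum_{i=n}^{n+l-1}\widetilde{\varepsilon}_i,
\]
the error sums match on the nose. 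So the claimed modulus works.

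For claim (2), with $f=\mathrm{id}$ the required property asks that for each $r,n,m$ some $k$ exists such that for all $p\in AF_k$ and all $l\le m$,
\[
H(\phi_{2(n+l)+1}(p,x_{2(n+l)+1}))<G(\phi_{2n}(p,x_{2n}))+\sum_{i=n}^{n+l}\widetilde{\varepsilon}_i+\frac{1}{r+1}.
\]
I would apply the hypothesis this time at $(r,2n,2m+1)$, giving $k=\chi(r,2n,2m+1)$ valid for all $l'\le 2m+1$, and then choose the odd value $l'=2l+1$ with $l\le m$. The left-hand side matches because $2n+(2l+1)=2(n+l)+1$. The one minor subtlety is the error comparison: the hypothesis delivers the sum $\sum_{i=2n}^{2n+2l}\varepsilon_i$, whereas the target sum equals $\sum_{i=2n}^{2n+2l+1}\varepsilon_i$, which is larger by the nonnegative term $\varepsilon_{2(n+l)+1}$. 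Since the inequality is preserved under enlarging the right-hand side by nonnegative quantities, the target estimate follows.

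The main (and only) point requiring attention is the asymmetry in the error aggregation in claim (2): the rewriting $\widetilde{\varepsilon}_i=\varepsilon_{2i}+\varepsilon_{2i+1}$ forces the target sum to run up to $\varepsilon_{2(n+l)+1}$, which goes one term past what the hypothesis supplies for $l'=2l+1$; invoking nonnegativity of the errors closes this gap. Everything else is simple bookkeeping of indices, so no further machinery is needed and the proof can be written in a few lines.
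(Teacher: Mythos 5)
Your proof is correct; the paper omits this proof entirely (``We omit the proof as it is rather immediate''), so the only question is whether your argument is sound, and it is. Both claims follow the direct unfold-and-reindex approach you describe.

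The one point that deserves the attention you give it is indeed the error bookkeeping in claim (2): for $l'=2l+1$ the hypothesis yields $\sum_{i=2n}^{2n+2l}\varepsilon_i$, which falls one term short of $\sum_{i=n}^{n+l}\widetilde{\varepsilon}_i=\sum_{i=2n}^{2n+2l+1}\varepsilon_i$, and you correctly close the gap using $(\varepsilon_n)\subseteq\mathbb{R}_+$. The reindexing in claim (1) is exact. A purely cosmetic remark: the paper's formal definition writes the modulus as $\chi(r,n,m)$ while the proposition (and all later use in Theorems 4.13 onward) adopts the order $\chi(n,m,r)$; you switch to the $(r,n,m)$ ordering mid-argument when writing $\chi(r,2n,2m)$, which does not match the stated $\chi'(n,m,r)=\chi(2n,2m,r)$. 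This is the paper's inconsistency rather than yours, but in a final write-up you should fix one convention so the displayed moduli match the proposition's statement verbatim.
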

We omit the proof as it is rather immediate.

\medskip

Up to this point, we assumed the functions $G$ and $H$ to be the same in both the Fej\'er and the $f$-monotonicity property. This can be immediately extended to the case where the respective functions do not match which we will establish through the following results:

\begin{proposition}
Let $G_{1,2},H_{1,2}$ be given. Set $H=\min\{H_1,H_2\}$ and $G=\max\{G_1,G_2\}$. Let $\alpha_{G_i}$ be $G$-moduli for $G_i$ and $\beta_{H_i}$ be $H$-moduli for $H_i$. Then
\[
\alpha_G(k)=\max\{\alpha_{G_1}(k),\alpha_{G_2}(k)\}
\]
is a $G$-modulus for $G$ and
\[
\beta_H(k)=\max\{\beta_{H_1}(k),\beta_{H_2}(k)\}
\]
is a $H$-modulus for $H$.
\end{proposition}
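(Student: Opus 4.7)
The plan is to unpack the definitions of $G$-modulus and $H$-modulus directly and check both claims by a short case analysis exploiting that $\max$ of the moduli makes the hypotheses of the respective defining implications easier to trigger.

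For the $G$-modulus claim, I would start with $a \in \mathbb{R}_+$ satisfying $a \leq 1/(\alpha_G(k)+1)$. Since $\alpha_G(k) = \max\{\alpha_{G_1}(k),\alpha_{G_2}(k)\}$, we have $1/(\alpha_G(k)+1) \leq 1/(\alpha_{G_i}(k)+1)$ for $i=1,2$, so $a \leq 1/(\alpha_{G_i}(k)+1)$ for both indices. Applying the assumption that each $\alpha_{G_i}$ is a $G$-modulus for $G_i$ yields $G_i(a) \leq 1/(k+1)$ for both $i$, and hence $G(a) = \max\{G_1(a),G_2(a)\} \leq 1/(k+1)$, as required.

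For the $H$-modulus claim, suppose $H(a) \leq 1/(\beta_H(k)+1)$. Since $H(a) = \min\{H_1(a),H_2(a)\}$, there exists $i \in \{1,2\}$ with $H_i(a) \leq 1/(\beta_H(k)+1)$. Because $\beta_H(k) \geq \beta_{H_i}(k)$, we have $1/(\beta_H(k)+1) \leq 1/(\beta_{H_i}(k)+1)$, so $H_i(a) \leq 1/(\beta_{H_i}(k)+1)$ and invoking the assumed $H$-modulus property of $\beta_{H_i}$ gives $a \leq 1/(k+1)$.

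There is essentially no obstacle here: the only thing to notice is the asymmetry between the $G$-side (where taking $\max$ of the moduli is needed so that \emph{both} $G_i$-implications fire, matching the $\max$ in the definition of $G$) and the $H$-side (where $\max$ is needed so that \emph{whichever} $H_i$ realizes the $\min$ is still controlled, matching the $\min$ in the definition of $H$). I would present the two parts together in just a few lines and omit the verification that $\alpha_G,\beta_H$ are well-defined $\mathbb{N}\to\mathbb{N}$ functions, which is immediate.
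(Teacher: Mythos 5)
Your proposal is correct and follows essentially the same route as the paper's proof: for the $G$-side, both $\alpha_{G_i}$-implications fire simultaneously so the $\max$ defining $G$ is controlled, and for the $H$-side, the index $i$ realizing the $\min$ in $H$ is controlled because $\beta_H$ dominates both $\beta_{H_i}$. The paper's version is just stated more tersely; no substantive difference.
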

\begin{proof}
Let $a\leq\frac{1}{\alpha_G(k)+1}\leq\frac{1}{\alpha_{G_i}(k)+1}$ for $i=1,2$. Thus $G_i(a)\leq\frac{1}{k+1}$ for $i=1,2$ and thus $G(a)\leq\frac{1}{k+1}$.

Similarly, let $H(a)\leq\frac{1}{\beta_H(k)+1}\leq\frac{1}{\beta_{H_i}(k)+1}$ for $i=1,2$. Then $H_i(a)\leq\frac{1}{\beta_{H_i}(k)+1}$ for some $i$. Thus $a\leq\frac{1}{k+1}$.
\end{proof}

\begin{proposition}
Let $(\phi_n)$ be a sequence of functions $\phi_n:X\times X\to\mathbb{R}_+$. Let $G_{1,2},H_{1,2}$ be given. Set $H=\min\{H_1,H_2\}$ and $G=\max\{G_1,G_2\}$. Then:
\begin{enumerate}
\item If $\chi$ is a modulus for uniform $(\phi_n)$-$(G_1,H_1)$-quasi-Fej\'er monotonicity of $(x_n)$ w.r.t.\ errors $(\varepsilon_n)$, then it is also a modulus for uniform $(\phi_n)$-$(G,H)$-quasi-Fej\'er monotonicity of $(x_n)$ w.r.t.\ $(\varepsilon_n)$.
\item If $\zeta$ is a modulus of uniform $(\phi_n)$-$(G_2,H_2)$-quasi-$f$-monotonicity of $(x_n)$ w.r.t.\ $(\varepsilon_n)$, then it is also a modulus of uniform $(\phi_n)$-$(G,H)$-quasi-$f$-monotonicity of $(x_n)$ w.r.t.\ $(\varepsilon_n)$.
\end{enumerate} 
\end{proposition}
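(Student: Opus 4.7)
The plan is to observe that this proposition is essentially a pointwise monotonicity argument: since $H = \min\{H_1, H_2\}$ we have $H \leq H_1$ and $H \leq H_2$ pointwise, and since $G = \max\{G_1, G_2\}$ we have $G_1 \leq G$ and $G_2 \leq G$ pointwise. The defining inequality for uniform quasi-Fej\'er monotonicity and uniform quasi-$f$-monotonicity both have $H(\cdot)$ on the left-hand side and $G(\cdot)$ (plus error terms) on the right-hand side, so making the left side smaller and the right side larger preserves the inequality. Hence the same modulus witnessing the property for $(G_i, H_i)$ automatically witnesses it for $(G, H)$.

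For part (1), I would fix $r, n, m \in \mathbb{N}$, set $k = \chi(r,n,m)$, and let $p \in AF_k$ be arbitrary. By the hypothesis on $\chi$, for all $l \leq m$,
\[
H_1(\phi_{n+l}(p, x_{n+l})) < G_1(\phi_n(p, x_n)) + \sum_{i=n}^{n+l-1}\varepsilon_i + \frac{1}{r+1}.
\]
Chaining the two monotonicity inequalities $H(\phi_{n+l}(p,x_{n+l})) \leq H_1(\phi_{n+l}(p,x_{n+l}))$ and $G_1(\phi_n(p,x_n)) \leq G(\phi_n(p,x_n))$ around this strict estimate immediately yields
\[
H(\phi_{n+l}(p, x_{n+l})) < G(\phi_n(p, x_n)) + \sum_{i=n}^{n+l-1}\varepsilon_i + \frac{1}{r+1},
\]
which is exactly the property that $\chi$ serves as a modulus of uniform $(\phi_n)$-$(G,H)$-quasi-Fej\'er monotonicity.

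For part (2), the argument is entirely analogous: starting from the inequality guaranteed by $\zeta$ for $(G_2, H_2)$, namely
\[
H_2(\phi_{2(n+l)+1}(p,x_{2(n+l)+1})) < G_2(\phi_{2f(n)}(p,x_{2f(n)})) + \sum_{i=f(n)}^{n+l}\varepsilon_i + \frac{1}{r+1}
\]
for $p \in AF_{\zeta(r,n,m)}$ and $l \leq m$, the same two monotonicity bounds $H \leq H_2$ and $G_2 \leq G$ convert it into the corresponding inequality for $(G, H)$.

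I do not anticipate any obstacle here; the proof consists solely of chaining two pointwise inequalities on each side of an already established strict estimate, and the authors themselves will likely present it as immediate or omit it. The only thing worth making explicit is that the strict inequality is preserved precisely because the modifications on the two sides go in the ``right'' directions (shrinking the $H$-side, enlarging the $G$-side), which is exactly why $H$ was taken as a minimum and $G$ as a maximum.
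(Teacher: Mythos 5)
Your proof is correct and is precisely the immediate monotonicity argument the paper has in mind; indeed, the paper omits the proof as ``rather immediate,'' and your chaining of $H \leq H_1$ (resp.\ $H \leq H_2$) on the left and $G_1 \leq G$ (resp.\ $G_2 \leq G$) on the right around the strict estimate supplied by the modulus is exactly that argument.
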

We again omit the proof as it is rather immediate.

\medskip

\begin{proposition}
Let $(\phi_n)$ be a sequence of functions $\phi_n:X\times X\to\mathbb{R}_+$. Let $G_{1,2},H_{1,2}$ be given and assume that $G_2(x)\leq H_1(x)$ for all $x$. Let $f:\mathbb{N}\to\mathbb{N}$ be nondecreasing and $f(n)\leq n$. 

Let $\chi$ be a modulus of uniform $(\phi_{2n})$-$(G_1,H_1)$-quasi-Fej\'er monotonicity for $(x_{2n})$ w.r.t.\ $F$ and $(AF_k)$ with errors $(\varepsilon_{n})$ and let $\zeta:\mathbb{N}^2\to\mathbb{N}$ be nondecreasing in the left argument such that for all $n,r\in\mathbb{N}$, there exists a $k\leq\zeta(n,r)$ such that
\[
\forall p\in AF_k\left(H_2(\phi_{2n+1}(p,x_{2n+1}))\leq G_2(\phi_{2f(n)}(p,x_{2f(n)}))+\sum_{i=f(n)}^{n}\varepsilon_i+\frac{1}{r+1}\right).
\]
Then $\hat{\zeta}(n,m,r)=\max\{\chi(f(n),f(n+m)-f(n),2r+1),\zeta(n+m,2r+1)\}$ is a modulus of uniform $(\phi_n)$-$(G_1,H_2)$-quasi-$f$-monotonicity for $(x_n)$ w.r.t.\ $F$ and $(AF_k)$ with errors $(\varepsilon_n)$.
\end{proposition}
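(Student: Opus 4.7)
The plan is a two-step chain: first use the hypothesis on $\zeta$ at index $n+l$ (for each $l\leq m$) to relate the odd term at $2(n+l)+1$ to the even term at $2f(n+l)$, and then use the Fejér modulus $\chi$ for the subsequence $(x_{2n})$ on the block $[f(n),f(n+m)]$ to relate $\phi_{2f(n+l)}(p,x_{2f(n+l)})$ back to $\phi_{2f(n)}(p,x_{2f(n)})$. The pointwise inequality $G_2\leq H_1$ is precisely what allows the $G_2$-term produced by $\zeta$ to be absorbed into the $H_1$-conclusion produced by $\chi$. Splitting the target slack $\frac{1}{r+1}$ into two halves $\frac{1}{2r+2}$ justifies invoking both moduli at the sharper precision $2r+1$, which explains the shape of $\hat{\zeta}$.

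Concretely, I would fix $r,n,m\in\mathbb{N}$, set $k:=\hat{\zeta}(n,m,r)$, and pick any $p\in AF_k$ and any $l\leq m$. Since $(AF_k)$ is monotone decreasing and $\zeta$ is nondecreasing in its left argument, one has $\zeta(n+l,2r+1)\leq\zeta(n+m,2r+1)\leq k$, so the hypothesis on $\zeta$ applied at $n+l$ gives
\[
H_2(\phi_{2(n+l)+1}(p,x_{2(n+l)+1}))\leq G_2(\phi_{2f(n+l)}(p,x_{2f(n+l)}))+\sum_{i=f(n+l)}^{n+l}\varepsilon_i+\frac{1}{2r+2}.
\]
Since $f$ is nondecreasing, $l':=f(n+l)-f(n)\leq f(n+m)-f(n)$; applying the Fejér modulus $\chi$ at starting index $f(n)$, window $f(n+m)-f(n)$, and precision $2r+1$ (which is available since $k\geq\chi(f(n),f(n+m)-f(n),2r+1)$), one obtains
\[
H_1(\phi_{2f(n+l)}(p,x_{2f(n+l)}))<G_1(\phi_{2f(n)}(p,x_{2f(n)}))+\sum_{i=f(n)}^{f(n+l)-1}\varepsilon_i+\frac{1}{2r+2}.
\]

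The last step is to upgrade $G_2$ to $H_1$ via the pointwise assumption $G_2\leq H_1$ in the first display, chain it with the second display, observe that the two partial error sums fit together into $\sum_{i=f(n)}^{n+l}\varepsilon_i$ with no gap or overlap, and note that $\frac{1}{2r+2}+\frac{1}{2r+2}=\frac{1}{r+1}$; this produces the required inequality
\[
H_2(\phi_{2(n+l)+1}(p,x_{2(n+l)+1}))<G_1(\phi_{2f(n)}(p,x_{2f(n)}))+\sum_{i=f(n)}^{n+l}\varepsilon_i+\frac{1}{r+1}
\]
valid for all $l\leq m$ and all $p\in AF_k$, as demanded by uniform $(\phi_n)$-$(G_1,H_2)$-quasi-$f$-monotonicity. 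The only point requiring any attention is the index bookkeeping—checking $l'\geq 0$ and that the two sum ranges $[f(n+l),n+l]$ and $[f(n),f(n+l)-1]$ together cover $[f(n),n+l]$ disjointly—but no genuine obstacle arises beyond this.
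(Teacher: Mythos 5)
Your proof is correct and follows essentially the same two-step chain as the paper's: apply the hypothesis on $\zeta$ at index $n+l$ to pass from the odd term to $x_{2f(n+l)}$, use $G_2\leq H_1$ to upgrade to $H_1$, then apply $\chi$ on the block $[f(n),f(n+m)]$ to reach $x_{2f(n)}$, and glue the error sums over $[f(n),f(n+l)-1]$ and $[f(n+l),n+l]$. In fact your precision bookkeeping is slightly tidier than the paper's, which contains a small typo (writing $\frac{1}{2r+1}$ where the correct tolerance from precision $2r+1$ is $\frac{1}{2r+2}$, as you have).
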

\begin{proof}
Let $k=\hat{\zeta}(n,m,r)$ and $p\in AF_k$. Then $k\geq\zeta(n+m,2r+1)\geq\zeta(n+l,2r+1)$ and thus
\[
H_2(\phi(p,x_{2(n+l)+1}))\leq G_2(\phi(p,x_{2f(n+l)}))+\sum_{i=f(n+l)}^{n+l}\varepsilon_i+\frac{1}{2r+1}.
\]
Using $G_2\leq H_1$ and $k\geq\chi(n,m,2r+1)$, the above then yields
\begin{align*}
H_2(\phi(p,x_{2(n+l)+1}))&\leq H_1(\phi(p,x_{2f(n+l)}))+\sum_{i=f(n+l)}^{n+l}\varepsilon_i+\frac{1}{2r+1}\\
&\leq G_1(\phi(p,x_{2f(n)}))+\sum_{i=f(n+l)}^{n+l}\varepsilon_i+\sum_{i=f(n)}^{f(n+l)-1}\varepsilon_i+\frac{1}{r+1}\\
&=G_1(\phi(p,x_{2f(n)}))+\sum_{i=f(n)}^{n+l}\varepsilon_i+\frac{1}{r+1}
\end{align*}
as $f(n)\leq f(n+l)$ since $f$ is nondecreasing.
\end{proof}

Lastly, we want to consider a special situation of the $f$-monotonicity property. Assume that we are in a situation where
\[
H(\phi_{2(n+m)+s}(p,x_{2(n+m)+s}))\leq G(\phi_{2n}(p,x_{2n}))
\]
for any $n,m\in\mathbb{N}$ and any $p\in F$ where $s=2s'+1$ is a given odd step length. Such a situation can be converted to the above setting in a straightforward way:
\begin{proposition}\label{pro:affineLinearF}
Let $(\phi_n)$ be a sequence of functions $\phi_n:X\times X\to\mathbb{R}_+$ and let $\chi$ be a modulus of uniform $(\phi_{2n})$-$(G,H)$-quasi-Fej\'er monotonicity for $(x_{2n})$ w.r.t.\ $F$ and $(AF_k)$ with errors $(\varepsilon_n)$. Assume $s=2s'+1$ and let $\zeta:\mathbb{N}^3\to\mathbb{N}$ be such that for all $n,m,r\in\mathbb{N}$, there exists a $k\leq\zeta(n,m,r)$ such that for all $p\in AF_k$:
\[
\forall l\leq m\left(H(\phi_{2(n+l)+s}(p,x_{2(n+l)+s}))\leq G(\phi_{2n}(p,x_{2n}))+\sum_{i=f(n)}^{n+l+s'}\varepsilon_i+\frac{1}{r+1}\right).
\]
Then, for $f(n)=n\dotdiv s'$, $\hat{\zeta}(n,m,r)=\max\{\zeta(n,m,r),\chi(0,s',r)\}$ is a modulus of uniform $(\hat{\phi}_n)$-$(G,H)$-quasi-$f$-monotonicity for $(\hat{x}_n)$ w.r.t.\ $F$ and $(AF_k)$ with errors $(\varepsilon_n)$ where for any sequence $(a_n)$ of objects, we define
\[
\hat{a}_n=\begin{cases}
a_n&\text{for }n\geq s,\\
a_n&\text{for even }n< s,\\
a_{n-1}&\text{for odd }n<s.
\end{cases}
\]
\end{proposition}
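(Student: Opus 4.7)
The plan is a direct index-translation argument. Fix $N, m, r \in \mathbb{N}$ and take $k \leq \zeta(N, m, r)$ as witnessed by the hypothesis applied at $(n, m, r) = (N, m, r)$; the claim is that this same $k$ will serve as the witness for the uniform $(\hat{\phi}_n)$-$(G,H)$-quasi-$f$-monotonicity of $(\hat{x}_n)$. For any $p \in AF_k$ and $l_h \leq m$, the hypothesis delivers
\[
H(\phi_{2(N+l_h)+s}(p, x_{2(N+l_h)+s})) \leq G(\phi_{2f(N)}(p, x_{2f(N)})) + \sum_{i=f(N)}^{N+l_h+s'}\varepsilon_i + \frac{1}{r+1}.
\]
Using $s = 2s' + 1$, the LHS index rewrites as $2(N + l_h) + s = 2(N + l_h + s') + 1$, and the sum upper endpoint rewrites as $N + l_h + s' = N + l_c$ under the reparametrization $l_c := l_h + s'$. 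Thus for $l_c \in [s', m + s']$, the hypothesis directly yields the conclusion's target inequality, which is in particular what we want on the range $l_c \in [s', m]$; on this range every involved index exceeds $s$, so the padding $\hat{\cdot}$ acts as the identity on all three sequences $(\phi_n), (x_n), (\varepsilon_n)$, and the resulting statement coincides with the one required for the $f$-monotonicity.

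For the remaining boundary range $l_c \in [0, s')$, the $\hat{\cdot}$ padding is precisely what saves the day. By definition, $\hat{a}_{2j+1} = a_{2j}$ for all odd $2j+1 < s$, so whenever $N + l_c < s'$ the conclusion's LHS is $\hat{\phi}_{2(N+l_c)+1}(p, \hat{x}_{2(N+l_c)+1}) = \phi_{2(N+l_c)}(p, x_{2(N+l_c)})$, and similarly $\hat{\varepsilon}_i$ relates to $\varepsilon_i$ via the same shift on the relevant odd sub-threshold indices. For the extremal cases where both $N$ and $l_c$ are small enough, the conclusion's inequality collapses to an identification of the two sides modulo the padding, and is therefore trivially valid; for the intermediate boundary cases with $l_c < s'$ but $N + l_c \geq s'$, the same identification reduces the statement to an instance of the hypothesis applied at a suitably adjusted time parameter, absorbed into the bound $\zeta(N, m, r)$.

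The main obstacle is the careful bookkeeping of these boundary cases across the threshold $s$: one must verify that the $\hat{\cdot}$-collapse of each sub-threshold inequality aligns with either a trivial identity between identical terms or an instance of the hypothesis already furnishing what is needed. Once $\hat{\cdot}$ is unfolded term by term in the definitions of $\hat{\phi}_n$, $\hat{x}_n$, and $\hat{\varepsilon}_n$, this verification becomes purely mechanical, and the proposition follows. Given the routine nature of the translation, the content of the proposition is less in the technical proof than in the fact that the conversion can be packaged so that the very same $\zeta$ transfers, without modification, from the assumed shifted-step inequality to the standardized $f$-monotonicity notion used elsewhere in the paper.
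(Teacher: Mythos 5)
Your reparametrization $l_c := l_h + s'$ correctly matches the hypothesis to the conclusion on the range $l_c \in [s', m]$ for the $\phi$- and $x$-indices on the left, since there $2(N+l_c)+1 \geq s$. However, your claim that ``on this range every involved index exceeds $s$, so the padding $\hat{\cdot}$ acts as the identity on all three sequences'' is not correct for the error sum: it runs over $i \in [f(N), N+l_c]$ with $f(N)=N \dotdiv s'$, and whenever $N < 3s'+1$ some of these $i$ are $< s$; at odd such $i$ one has $\hat{\varepsilon}_i = \varepsilon_{i-1} \neq \varepsilon_i$ in general, and $\sum\hat{\varepsilon}_i$ and $\sum\varepsilon_i$ are not comparable (the $\varepsilon_i$ are only nonnegative, not monotone). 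So even the ``core'' range does not transfer as cleanly as you assert.

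The more serious gap is in the boundary cases. For $N + l_c < s'$ (hence $N < s'$, $f(N)=0$), the conclusion unwinds to
\[
H(\phi_{2(N+l_c)}(p,x_{2(N+l_c)})) < G(\phi_0(p,x_0)) + \sum_{i=0}^{N+l_c}\hat{\varepsilon}_i + \tfrac{1}{r+1},
\]
which for $N+l_c>0$ is a nontrivial Fej\'er-type bound on the \emph{even}-indexed subsequence, and even for $N+l_c=0$ is nontrivial unless $H \leq G$; the hypothesis, concerning only odd indices $\geq s$, says nothing about it, so this is not ``trivially valid'' as you claim. For the intermediate cases $l_c < s'$ but $N+l_c \geq s'$, shifting the time parameter from $N$ to $N' := N+l_c-s'$ does align the left-hand side, but it replaces the target $G(\phi_{2f(N)}(\cdot,\cdot))$ by $G(\phi_{2f(N')}(\cdot,\cdot))$ with $f(N')\neq f(N)$ whenever $l_c\neq s'$ (for $N>s'$), and it produces the bound $\zeta(N',m,r)$ rather than the required $\zeta(N,m,r)$. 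So neither branch of your boundary analysis closes. The paper gives no proof for this proposition, so there is nothing to compare against; but note that in the only place it is actually invoked one has $G=H=\mathrm{id}$, $\varepsilon_n\equiv 0$, and $s'=1$, where the sole boundary case $N=l_c=0$ really does collapse to $\phi_0(p,x_0) < \phi_0(p,x_0) + \frac{1}{r+1}$. A correct proof in general would need either those restrictions made explicit or an additional ingredient (e.g.\ a modulus of uniform Fej\'er monotonicity for the even subsequence to cover the sub-threshold even indices), and the conclusion's modulus would have to be adjusted accordingly.
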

So, we in particular see that the above situation is covered already via affine linear $f$. In that case, note that a modulus of uniform Fej\'er monotonicity for the sequence $(x_{2n})$ is also a modulus of uniform Fej\'er monotonicity for $(\hat{x}_{2n})$. Similarly, we get the following result:
\begin{proposition}\label{pro:affineLinearFapprox}
Assume $s=2s'+1$ and let $\Phi$ be an approximate $F$-point bound for $(x_{2n})$. Then $\Phi'(k)=\Phi(k)+s'$ is an approximate $F$-point bound for $(x_{2f(n)})$ where $f(n)=n\dotdiv s'$.
\end{proposition}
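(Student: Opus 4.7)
The plan is to unfold both the hypothesis and the desired conclusion and verify the claim by a direct witness construction. By definition of an approximate $F$-point bound for $(x_{2n})$, we have that for every $k\in\mathbb{N}$ there exists some $N_0\leq \Phi(k)$ with $x_{2N_0}\in AF_k$. What we need to establish is that for every $k\in\mathbb{N}$ there exists an $N\leq\Phi(k)+s'$ with $x_{2f(N)}\in AF_k$, where $f(n)=n\dotdiv s'$.

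The obvious choice is to set $N:=N_0+s'$. First I would note that $N=N_0+s'\leq \Phi(k)+s'=\Phi'(k)$, which gives the required bound. Second, I would compute $f(N)=(N_0+s')\dotdiv s'=N_0$, using only that $N_0+s'\geq s'$ so truncated subtraction reduces to ordinary subtraction. Consequently $x_{2f(N)}=x_{2N_0}\in AF_k$ by the assumption on $N_0$, which is exactly the required witness.

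There is essentially no obstacle here; the content of the statement is just the arithmetic observation that shifting the index by $s'$ compensates for the truncated subtraction defining $f$, and the bound on the witness degrades only by the additive constant $s'$. The only thing to be mindful of is that $n\dotdiv s'$ denotes truncated subtraction, so one must explicitly remark that $N_0+s'\geq s'$ in order to pass from $\dotdiv$ to $-$.
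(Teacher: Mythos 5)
Your proof is correct and is exactly the argument the paper leaves implicit (no proof is given there, as it is considered immediate): shift the witness index by $s'$, observe the truncated subtraction cancels this shift, and note the bound degrades only by the additive constant $s'$.
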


\section{Relation to the metric case and to Bregman distances}\label{sec:ConsDist}

\subsection{Consistent distances and their properties}

We now want to discuss the relation of the previously introduced relativized notions to their original metric counterparts if the distance $\phi$ in question is suitably related to the metric. Concretely, in many cases\footnote{In fact, in the important special case of Bregman distances (which will be later discussed) this property was singled out as one of the central properties of the abstract generalized distances considered by Bregman already in the seminal paper \cite{Bre1967} (see Condition I on p.\ 200  therein) which formed the basis for the concrete Bregman distances now prevalent in the literature.} one works with distances $\phi$ which satisfy
\[
\phi(x,y)=0\leftrightarrow d(x,y)=0.
\]

In the following, we will consider distances that fulfill the above property quantitatively in the sense of the following definition:
\begin{definition}
We call $\phi$ consistent if there exist two functions $\lambda,\Lambda:\mathbb{N}\times\mathbb{N}\to\mathbb{N}$, called moduli of consistency, that satisfy
\begin{gather*}
\phi(x,y)\leq\frac{1}{\lambda(k,b)+1}\to d(x,y)\leq\frac{1}{k+1},\\
d(x,y)\leq\frac{1}{\Lambda(k,b)+1}\to \phi(x,y)\leq\frac{1}{k+1},
\end{gather*}
for any $k,b\in\mathbb{N}$ and for any $x,y\in X$ such that $d(a,x),d(a,y)\leq b$ holds w.r.t.\ some reference point $a\in X$.\footnote{As before with the modulus $\theta$, the existence of such moduli does not depend on the concrete reference point.} We call such functions moduli of uniform consistency (and $\phi$ uniformly consistent) if they do not depend on $b$.
\end{definition}

\begin{remark}
Note that given $\lambda,\Lambda:\mathbb{N}\to\mathbb{N}$, these functions are moduli of uniform consistency for $\phi$ if, and only if $d\preceq^\lambda\phi$ and $\phi\preceq^\Lambda d$.
\end{remark}

We will later discuss an example of a consistent distance beyond the metric.\\

In the context of such moduli of consistency, we can immediately derive that $\phi$ is weakly triangular as well as triangular and derive corresponding moduli from $\lambda,\Lambda$: Define $\theta(k,b)=\lambda(2k+1,b)$. Then for $x,y,z$ such that $d(a,x),d(a,y),d(a,z)\leq b$, assume that 
\[
\phi(y,x),\phi(y,z)\leq\frac{1}{\theta(k,b)+1}.
\]
Then in particular
\[
d(y,x),d(y,z)\leq\frac{1}{2k+2}
\]
by assumption on $\lambda$ and thus $d(x,z)\leq 1/(k+1)$. Thus $\theta$ is a modulus of triangularity. Clearly, it also follows immediately from this that $\theta(k,b)=\lambda(2\Lambda(k,b)+1,b)$ is a modulus of weak triangularity.

\medskip

Further, a simple compactness argument shows again that if the space is compact and $\phi$ is continuous, then the above property
\[
\phi(x,y)=0\leftrightarrow d(x,y)=0
\]
already implies the existence of moduli of uniform consistency for $\phi$.

\begin{remark}[For logicians]
As in the context of the moduli of (weak) triangularity, also here the underlying logical methods from proof mining actually guarantee that corresponding moduli $\lambda,\Lambda$ can be extracted from a large class of proofs of the property
\[
\phi(x,y)=0\leftrightarrow d(x,y)=0,
\]
even in the absence of any compactness assumptions.
\end{remark}

Even further, using such moduli, we can translate corresponding moduli of total $\phi$-boundedness and uniform $\phi$-closedness into moduli for their metric counterparts, and vice versa. For that, we again rely on these uniform versions of the moduli $\lambda,\Lambda$ which in particular can be derived from the usual moduli in any bounded metric space. As the proof is rather straightforward, we omit it.

\begin{proposition}
Let $\phi:X\times X\to\mathbb{R}_+$ be a uniformly consistent mapping with moduli $\lambda,\Lambda:\mathbb{N}\to\mathbb{N}$. Then:
\begin{enumerate}
\item If $\gamma$ is a modulus of total boundedness for some set $A\subseteq X$, then $\gamma'(k)=\gamma(\Lambda(k))$ is a modulus of total $\phi$-boundedness.
\item If $\omega,\delta$ are moduli of uniform closedness for a given $F$ and a sequence $(AF_k)$, then $\omega'(k)=\lambda(\omega(k))$ and $\delta'(k)=\delta(k)$ are respective moduli of uniform $\phi$-closedness.
\end{enumerate}
\end{proposition}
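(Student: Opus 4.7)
The plan is to verify both items by direct unfolding of the definitions, using the uniform consistency moduli $\lambda$ and $\Lambda$ purely as translation devices between $\phi$-smallness and $d$-smallness. No computation is really needed beyond tracking which direction each modulus points.

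For (1), suppose $(x_n)\subseteq A$ and $k\in\mathbb{N}$ are given. I would apply the modulus of total boundedness $\gamma$ at the refined precision $\Lambda(k)$: this produces indices $0\leq i<j\leq\gamma(\Lambda(k))$ with $d(x_i,x_j)\leq 1/(\Lambda(k)+1)$. By symmetry of $d$, the same bound holds for $d(x_j,x_i)$, so the second defining implication of uniform consistency, namely $d(x,y)\leq 1/(\Lambda(k)+1)\to \phi(x,y)\leq 1/(k+1)$, immediately yields $\phi(x_j,x_i)\leq 1/(k+1)$. This is precisely the requirement on $\gamma'(k)=\gamma(\Lambda(k))$.

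For (2), I would argue in one step. Assume $k\in\mathbb{N}$ and $p,q\in X$ satisfy $q\in AF_{\delta'(k)}=AF_{\delta(k)}$ together with $\phi(q,p)\leq 1/(\omega'(k)+1)=1/(\lambda(\omega(k))+1)$. Now I would invoke the first defining implication of uniform consistency, $\phi(x,y)\leq 1/(\lambda(\omega(k))+1)\to d(x,y)\leq 1/(\omega(k)+1)$, to deduce $d(q,p)\leq 1/(\omega(k)+1)$. The hypothesis that $\omega,\delta$ witness uniform closedness of $F$ w.r.t.\ $(AF_k)$ then gives $p\in AF_k$, which is exactly what is demanded of $\omega'$ and $\delta'$.

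The only thing worth flagging is bookkeeping: one has to keep straight which consistency modulus controls which direction. The modulus $\Lambda$ converts $d$-smallness into $\phi$-smallness and is therefore the right one for (1), where the underlying $\gamma$ provides $d$-smallness and we need a $\phi$-conclusion; conversely, $\lambda$ converts $\phi$-smallness into $d$-smallness and is therefore the right one for (2), where the hypothesis is now $\phi$-small and the closedness modulus $\omega$ expects a $d$-bound. Beyond this, there is no real obstacle.
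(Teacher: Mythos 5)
Your proof is correct, and it is precisely the straightforward unfolding-of-definitions argument that the paper has in mind (the paper explicitly omits the proof as ``rather straightforward''). In particular, your bookkeeping paragraph correctly identifies the one place where care is needed: $\Lambda$ is used in (1) because $\gamma$ gives a $d$-bound and the conclusion needs a $\phi$-bound, while $\lambda$ is used in (2) because the hypothesis is a $\phi$-bound and the given closedness moduli expect a $d$-bound, with the symmetry of $d$ handling the order of arguments in (1).
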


Derived from these conversions, we can now obtain the following modified general quantitative convergence result where the assumptions are phrased in terms of the metric (and thus are witnessed by moduli already commonly used in the literature).

\begin{theorem}\label{thm:metastabfversionmetricversion}
Let $(X,d)$ be a metric space and $(\phi_n)$ be a sequence of functions $\phi_n:X\times X\to\mathbb{R}_+$. Assume that $\phi:X\times X\to\mathbb{R}_+$ is uniformly consistent with moduli $\lambda,\Lambda:\mathbb{N}\to\mathbb{N}$ and satisfies $\phi_n\nearrow\phi$ uniformly with a rate of convergence $\pi$. Further assume that $\psi\preceq^A\phi_n$ for some $A:\mathbb{N}\to\mathbb{N}$ where $\psi$ is uniformly weakly triangular with a modulus $\theta$. Assume that $X$ is totally bounded with a modulus of total boundedness $\gamma$, $\alpha_G$ be a $G$-modulus for $G$, $\beta_H$ be a $H$-modulus for $H$ and $f:\mathbb{N}\to\mathbb{N}$ be nondecreasing and $f(n)\leq n$. Further, let $(x_n)\subseteq X$ be a sequence and
\begin{enumerate}
\item $\chi$ be a modulus for uniform $(\phi_{2n})$-$(G,H)$-quasi-Fej\'er monotonicity w.r.t.\ $F$ and $(AF_k)$ for $(x_{2n})$ with errors $(\varepsilon_n)$,
\item $\xi$ be a Cauchy rate for $\sum\varepsilon_{n}<\infty$,
\item $\Phi$ be a $\liminf$-bound w.r.t.\ $F$ and $(AF_k)$ for $(x_{2f(n)})$,
\item $\kappa$ be a rate of divergence for $f$,
\item $\zeta$ be a modulus of uniform $(\phi_n)$-$(G,H)$-quasi-$f$-monotonicity w.r.t.\ $F$ and $(AF_k)$ for $(x_{n})$ with errors $(\varepsilon_n)$.
\end{enumerate}
Then $(x_n)$ is $\psi$-Cauchy and for all $k\in\mathbb{N},g\in\mathbb{N}^\mathbb{N}$:
\[
\exists N\leq\Psi'(k,g)\forall i,j\in [N;N+g(N)]\left( \psi(x_i,x_j)\leq\frac{1}{k+1}\right),
\] 
where $\Psi'(k,g)$ is defined similar to $\Psi$ from Theorem \ref{thm:quantThmFirst}, now using $\gamma'(k)=\gamma(\Lambda(k))$. As before, if $\psi$ is uniformly triangular with a modulus $\theta$, then the sequence is Cauchy with the same rate of metastability.

If in addition to the above assumptions, $F$ is uniformly $\psi$-closed with moduli $\delta$ and $\omega$, then for all $k\in\mathbb{N}$ and all $g\in \mathbb{N}^\mathbb{N}$:
\[
\exists N\leq\tilde{\Psi'}(k,g)\forall i,j\in [N;N+g(N)]\left( d(x_i,x_j)\leq\frac{1}{k+1}\text{ and }x_i\in AF_k\right),
\] 
where $\tilde{\Psi'}(k,g)$ is defined similar to $\Psi'$ from above but with
\[
\tilde{\eta}_{k,\delta}(n,r)=\max\{\delta(k),\eta(n,r)\}
\]
instead of $\eta$ and with $\tilde{\theta}(k)=\max\{\theta(k),\omega(k)\}$ instead of $\theta$.

If $\psi$ is also uniformly consistent with moduli $\lambda',\Lambda':\mathbb{N}\to\mathbb{N}$, then it suffices to assume that $F$ is uniformly closed with moduli $\delta$ and $\omega$ and the above rates apply with $\delta'(k)=\delta(k)$ and $\omega'(k)=\lambda(\omega(k))$ used instead.

Lastly, if in any of the cases, we have $\phi_n=\phi$ for all $n$ and instead of (1) and (5) we even have
\begin{enumerate}
\item [(1)'] $\chi$ is a modulus for uniform $\phi$-$(G,H)$-Fej\'er monotonicity w.r.t.\ $F$ and $(AF_k)$ for $(x_{2n})$,
\item [(5)'] $\zeta$ is a modulus of uniform $\phi$-$(G,H)$-$f$-monotonicity w.r.t.\ $F$ and $(AF_k)$ for $(x_n)$,
\end{enumerate}
then $\xi$ and $\kappa$ from assumptions (2) and (4) can be omitted and it suffices to assume that
\begin{enumerate}
\item [(3)'] $\Phi$ is an approximate $F$-point bound w.r.t.\ $(AF_k)$ for $(x_{2f(n)})$,
\end{enumerate}
and we get the same conclusions, respectively, with a rate $\hat{\Psi}(k,g)=2\hat{\Psi}_0(P,k,g)$ where $\hat{\Psi}_0$ is defined as 
\[
\begin{cases}
\hat{\Psi}_0(0,k,g)=0,\\
\hat{\Psi}_0(n+1,k,g)=\Phi(\eta^M(\hat{\Psi}_0(n,k,g),4\beta_H(A(\theta(k)))+3)),
\end{cases}
\]
with the other constants defined as before.
\end{theorem}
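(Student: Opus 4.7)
The plan is to treat this theorem essentially as a corollary assembling Theorems \ref{thm:quantThmFirst}, \ref{thm:quantThmSecond} and \ref{thm:metaSingleDist} together with the translation-of-moduli proposition immediately preceding the statement. The role played by uniform consistency of $\phi$ is precisely to convert the hypotheses phrased in terms of the metric $d$ (as is standard in the existing literature) into the $\phi$-level hypotheses required by those earlier theorems.

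Concretely, I would first note that, since $\phi$ is uniformly consistent with moduli $\lambda,\Lambda$, the preceding proposition turns the given modulus of total boundedness $\gamma$ for $X$ into a modulus of total $\phi$-boundedness $\gamma'(k)=\gamma(\Lambda(k))$. Substituting $\gamma'$ for $\gamma$ in the definition of the constant $P$ (and hence in $\Psi_0$) produces the rate $\Psi'$ that appears in the statement, and the first metastability conclusion then follows immediately from Theorem \ref{thm:quantThmFirst}. The triangularity case for $\psi$ is inherited verbatim from that theorem, as nothing in the conversion interacts with $\psi$.

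For the strengthened conclusion involving $x_i\in AF_k$, I would apply Theorem \ref{thm:quantThmSecond} with the supplied moduli of uniform $\psi$-closedness $\delta,\omega$ together with the converted $\gamma'$; this directly yields the rate $\tilde{\Psi'}$. In the subcase where only uniform closedness of $F$ w.r.t.\ the metric is available, I would invoke the second part of the preceding proposition (applied to $\psi$ using its moduli of consistency $\lambda',\Lambda'$) to convert $\delta,\omega$ into moduli of uniform $\psi$-closedness via $\delta'(k)=\delta(k)$ and $\omega'(k)=\lambda'(\omega(k))$, after which Theorem \ref{thm:quantThmSecond} again applies.

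Finally, for the special case $\phi_n=\phi$ in the absence of errors, I would invoke Theorem \ref{thm:metaSingleDist} (which is the analogous single-distance quantitative result), once more using $\gamma'$ and, if needed, the converted closedness moduli. I do not anticipate any genuine mathematical obstacle: no new quantitative argument is required beyond what is already present in Theorems \ref{thm:quantThmFirst}--\ref{thm:metaSingleDist}. The only delicate point is bookkeeping, namely verifying that plugging $\gamma'$ (and, where appropriate, $\delta',\omega'$) into the formulas for $\Psi_0$, $\tilde{\Psi}_0$ and $\hat{\Psi}_0$ from the earlier theorems indeed produces the rates $\Psi'$, $\tilde{\Psi'}$ and $\hat{\Psi}$ as written, which is immediate from the definitions. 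Accordingly, the proof can be given in a few lines by citing the three earlier theorems and the conversion proposition.
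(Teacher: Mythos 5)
Your proof is correct and takes exactly the route the paper intends — the paper itself omits the argument with the remark that it is immediate, and what you have written is the intended assembly of Theorems \ref{thm:quantThmFirst}, \ref{thm:quantThmSecond}, \ref{thm:metaSingleDist} together with the modulus-conversion proposition. (One small point in your favor: you write $\omega'(k)=\lambda'(\omega(k))$, which is the correct conversion using $\psi$'s own consistency modulus; the statement as printed in the paper says $\lambda$ rather than $\lambda'$, which appears to be a typo.)
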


Again, the proof is immediate and we thus omit it.

\begin{remark}
Similar to Remark \ref{rem:bdcomp}, also these results hold in the more general case of boundedly compact spaces if the sequence in question is bounded. However, in contrast to Fej\'er monotonicity w.r.t.\ the metric $d$, the property of a sequence $(x_n)$ being Fej\'er monotone w.r.t.\ $(\phi_n)$ even for $\phi_n\to\phi$ does not suffice to infer that said sequence is bounded but only that it is $\phi$-bounded as discussed before in Remark \ref{rem:bdcomp}. This is effectively due to the fact that we know almost nothing about $\phi$. In fact, the property that $(\phi_n)$-Fej\'er monotonicity implies boundedness of the sequence is not even guaranteed for the special case of Bregman monotone sequences, i.e.\ where $\phi_n=D_f$ for any $n$. In that context, it is thus a common requirement in the literature (being contained in various notions of a ``Bregman function'', see e.g.\ \cite{Eck1993}, and in the stronger form of requiring compactness already being mentioned in Bregman's work \cite{Bre1967} for his general class of distances $D$) to require that the level sets
\begin{gather*}
L_1(y,\alpha) = \{x\in X\mid D_f(x,y)\leq\alpha\},\\
L_2(x,\alpha) = \{y\in X\mid D_f(x,y)\leq\alpha\},
\end{gather*}
are bounded for every $\alpha>0$ and $x,y\in X$.

In similarity, we thus may additionally require that our function $\phi$ satisfies the following boundedness condition:
\[
\{y\in X\mid \phi(z,y)\leq\alpha\}\text{ is bounded for any }z\in X,\alpha>0.
\]
Then the $\phi$-boundedness translates into the boundedness of $(x_n)$ and thus the above results holds in boundedly compact spaces in that context.
\end{remark}

This result now gives rise to the following general convergence result by ``forgetting'' about the quantitative data in the assumptions and conclusions. Concretely we obtain the following generalization of Proposition \ref{pro:convRes}.

\begin{theorem}\label{thm:mainRes}
Let $(X,d)$ be compact and $F\subseteq X$ be explicitly closed. Let $(\phi_n)$ be a sequence of functions $\phi_n:X\times X\to\mathbb{R}_+$. Assume $\phi_n\nearrow\phi$ uniformly and that $\psi\preceq^A\phi_n$ where $A:\mathbb{N}\to\mathbb{N}$ and where $\psi,\phi:X\times X\to\mathbb{R}_+$ are uniformly consistent as well as continuous in their left arguments. If $(x_n)\subseteq X$ is such that there exists a nondecreasing function $f:\mathbb{N}\to\mathbb{N}$ with $f(n)\leq n$ and $f(n)\to\infty$ for $n\to\infty$ and where $(x_n)$ is partially $(\phi_n)$-$(G,H)$-quasi-Fej\'er monotone w.r.t.\ $F$, $f$ and $(\varepsilon_n)$ and $(x_{2f(n)})$ has the $\liminf$-property w.r.t.\ $F$ and $(AF_k)$, then $(x_n)$ converges to some $x^*\in F$.

In particular, the above holds if $\psi$ and $\phi$, instead of being uniformly consistent, are continuous also in their right arguments and it additionally holds that
\[
\psi(x,y)=0\leftrightarrow d(x,y)=0\leftrightarrow\phi(x,y)=0
\] 
for all $x,y\in X$.

If $\phi_n=\phi$ and $\varepsilon_n=0$ for all $n$, then it suffices that $(x_{2f(n)})$ has approximate $F$-points w.r.t.\ $(AF_k)$.
\end{theorem}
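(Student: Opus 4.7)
The plan is to derive the qualitative convergence by ``forgetting'' the quantitative information from Theorem \ref{thm:metastabfversionmetricversion}: one verifies (non-effectively) that every modulus required by that theorem exists under the present qualitative hypotheses, reads off a rate of metastability for $(x_n)$, and then deduces convergence in the metric and membership of the limit in $F$.

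First I would collect the moduli. Compactness of $(X,d)$ supplies a modulus of total boundedness $\gamma$. Uniform consistency of $\phi$ and of $\psi$ produces moduli $\lambda,\Lambda$ and $\lambda',\Lambda'$, respectively; as noted right after the definition of consistency, this in particular yields a modulus $\theta$ of uniform weak triangularity for $\psi$ and, indeed, a modulus of uniform triangularity for $\psi$. The comparison $\psi\preceq^A\phi_n$ and a rate $\pi$ for $\phi_n\nearrow\phi$ are given (the latter from the assumed uniform convergence). Moduli $\alpha_G,\beta_H$ for $G,H$ exist from the standing continuity assumptions; a Cauchy rate $\xi$ for $\sum\varepsilon_n<\infty$ and a rate of divergence $\kappa$ for $f$ exist from the underlying qualitative facts; and a $\liminf$-bound $\Phi$ for $(x_{2f(n)})$ is obtained by taking a least witness $N\ge n$ of $x_{2f(N)}\in AF_k$ for each pair $(k,n)$. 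The only nontrivial point is the existence of the moduli $\chi$ and $\zeta$ of uniform $(\phi_n)$-$(G,H)$-quasi-Fej\'er, respectively quasi-$f$-monotonicity. These are supplied by the compactness argument already mentioned in the paper after those definitions: since $X$ is compact, $F$ is explicitly closed, and every $\phi_n$ is continuous in its left argument, the plain partial (quasi-)Fej\'er and $f$-monotonicity assumptions on $(x_n)$ automatically upgrade to their uniform variants.

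With the hypotheses of Theorem \ref{thm:metastabfversionmetricversion} now in place, and with $\psi$ uniformly triangular by the above, that theorem yields a rate of metastability for $(x_n)$ in $d$, and hence $(x_n)$ is $d$-Cauchy. Compactness of $(X,d)$ then furnishes an $x^*\in X$ with $x_n\to x^*$. To see $x^*\in F$, I would combine the $\liminf$-property of $(x_{2f(n)})$ with the fact that $x_{2f(n)}\to x^*$ (which holds since $f(n)\to\infty$ and $x_n\to x^*$): for any $k,N\in\mathbb{N}$, some index $n$ simultaneously satisfies $x_{2f(n)}\in AF_k$ and $d(x_{2f(n)},x^*)\leq 1/(N+1)$, so $AF_k\cap\overline{B}(x^*,1/(N+1))\neq\emptyset$ for all $k,N$. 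The explicit closedness of $F$ then forces $x^*\in F$.

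The two remaining clauses reduce to the main argument. If $\psi,\phi$ are merely assumed continuous in both arguments together with $\psi(x,y)=0\leftrightarrow d(x,y)=0\leftrightarrow \phi(x,y)=0$, then the standard compactness argument invoked at several points in the paper upgrades these pointwise statements to moduli of uniform consistency, and the main case applies. If $\phi_n=\phi$ and $\varepsilon_n=0$ for all $n$, one appeals instead to the simplified last clause of Theorem \ref{thm:metaSingleDist}, which dispenses with $\xi$ and $\kappa$ and allows the $\liminf$-bound to be replaced by an approximate $F$-point bound; these are again supplied by the qualitative hypotheses. The main obstacle throughout is the compactness-plus-continuity upgrade from plain to uniform (quasi-)Fej\'er and $f$-monotonicity; once this is secured, the remaining moduli are read off the hypotheses routinely and the rest of the argument is essentially bookkeeping.
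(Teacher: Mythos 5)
Your proposal is correct and follows essentially the same path the paper takes: the paper's own proof is the one-line remark that the result follows from Theorem \ref{thm:metastabfversionmetricversion} once all uniform moduli are produced by compactness and metastability is observed to imply convergence, and you have fleshed out exactly this argument. The only place where you elaborate beyond the paper's text is the verification that the limit $x^*$ lies in $F$: rather than invoking the strengthened ``$x_i\in AF_k$'' clause of the metastability theorem (which requires producing the moduli $\delta,\omega$ of uniform closedness), you argue directly from the $\liminf$-property of $(x_{2f(n)})$, from $x_{2f(n)}\to x^*$ (using $f(n)\to\infty$), and from the explicit closedness of $F$ that $AF_k\cap\overline{B}(x^*,1/(N+1))\neq\emptyset$ for all $k,N$; this is a valid and slightly more direct route to the same conclusion. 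One small point worth flagging, though it is an issue inherited from the paper's own formulation rather than a flaw particular to your proof: the compactness argument you invoke to produce $\chi$ and $\zeta$ needs each $\phi_n$ to be continuous in its left argument, whereas the stated hypotheses only assert this for $\phi$ and $\psi$; you (like the paper's remarks after the definitions of those moduli) tacitly assume it for the $\phi_n$ as well.
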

\begin{proof}
The result follows from Theorem \ref{thm:metastabfversionmetricversion}, noting that all the relevant uniform premises follow from the nonuniform variants by simple compactness arguments as mentioned before and by noting that metastability implies convergence.
\end{proof}

This result not only generalizes the previous Proposition \ref{pro:convRes} but also encompasses (special cases of) other results on convergence for ``Fej\'er-like monotone'' sequences. As a representative example, besides those discussed in subsequent papers, we in particular want to mention the work of \cite{BBC2003} where a convergence result (see Theorem 4.11 therein) was obtained for sequences which are Fej\'er monotone w.r.t.\ Bregman-distances 
\[
D_f:X\times X\to [0,+\infty],\quad (x,y)\mapsto\begin{cases}f(x)-f(y)-\langle x-y,\nabla f(y)\rangle&\text{if }y\in\mathrm{int}\mathrm{dom}f,\\+\infty&\text{otherwise},\end{cases}
\]
for $f:X\to (0,+\infty]$ a lower semicontinuous convex function which is G\^{a}teaux-differentiable on $\mathrm{int}\mathrm{dom}f\neq\emptyset$ over a reflexive Banach space $X$. In particular, in the context of the strong convergence result presented there, the additional requirement (called Condition 4.4 in \cite{BBC2003}) that
\[
D_f(x_n,y_n)\to 0\text{ implies } x_n-y_n\to 0
\]
for all bounded sequences $(x_n)$ and $(y_n)$ in $\mathrm{int}\mathrm{dom}f$ is discussed. If $f$ is real-valued (i.e.\ $f(x)\neq+\infty$ for all $x$) then the above clearly yields that $D_f(x,y)=0$ implies $\norm{x-y}=0$. If $f$ is strictly convex, the Bregman distance $D_f$ also naturally satisfies that $\norm{x-y}=0$ implies $D_f(x,y)=0$ and further, if $f$ is even Fr\'echet-differentiable, then $D_f$ is continuous in both arguments. In that vein, our above convergence results covers these partial cases of the result obtained in \cite{BBC2003} if the sequence further belongs to a compact subset (as is e.g.\ in particular the case if the space is finite dimensional as the sequences are naturally bounded).

\subsection{A more involved example of a uniformly consistent distance}

We are now concerned with a more involved particular example of a concrete Bregman distances where moduli of consistency can be naturally constructed. Now, for general Bregman distances $D_f$, the situation is in general quite complicated as not all distances are immediately consistent. We refer to \cite{KP2023b} for further discussions on (the use of) moduli of consistency in this general case and in this subsection only survey the special case $D_f$ of the Bregman distance for $f=\norm{\cdot}^2/2$ in the context of uniformly convex and uniformly smooth Banach spaces where such moduli can be readily obtained (although this is non-trivial). Instead of working with $D_f$ directly, we follow the setup of e.g.\ the works \cite{Alb1996,KKT2004,KT2004} and rather work with the function $\phi$ defined by\footnote{It can be easily seen via the use of the Fenchel conjugate of $f$ for the above $f$ that $\phi$ is nothing else than $2D_f$.} 
\[
\phi(x,y)=\norm{x}^2-2\langle x,Jy\rangle+\norm{y}^2
\]
where $J$ is the (single-valued) duality map
\[
J(x)=\{x^*\in X^*\mid \norm{x}^2=\norm{x^*}^2=\langle x,x^*\rangle\}.
\]
Basic considerations on $\phi$ immediately yield that
\[
0\leq\left(\norm{x}-\norm{y}\right)^2\leq\phi(x,y).
\]
Central for obtaining moduli of consistency for this distance $\phi$ is the following result due to Alber:
\begin{proposition}[Alber \cite{Alb1996}]
Let $X$ be a uniformly convex and uniformly smooth Banach space with a modulus of convexity $\delta_X$, i.e.
\[
\delta_X(\varepsilon)=\inf\{1-\norm{x+y}/2\mid x,y\in X, \norm{x},\norm{y}\leq 1,\norm{x-y}\geq \varepsilon\},
\]
and a modulus of uniform smoothness $\rho_X$, i.e. 
\[
\rho_X(\tau)=\sup\{1/2(\norm{x+y}+\norm{x-y}-2)\mid x,y\in X,\norm{x}=1,\norm{y}\leq\tau\}.
\]
Then for all $x,y\in X$:
\[
8C^2\delta_X\left(\norm{x-y}/4C\right)\leq\phi(x,y)\leq 4C^2\rho_X\left(4\norm{x-y}/C\right)
\]
where
\[
C=\sqrt{(\norm{x}^2+\norm{y}^2)/2}.
\]
In particular, utilizing the Figiel inequalities \cite{Fig1976}, for $\norm{x},\norm{y}\leq b$:
\[
2L^{-1}b^2\delta_X\left(\norm{x-y}/4b\right)\leq\phi(x,y)\leq 4L^{-1}b^2\rho_X\left(4\norm{x-y}/b\right)
\]
where $1<L<3.18$ is the constant of the Figiel inequalities \cite{Fig1976}.
\end{proposition}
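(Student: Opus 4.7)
The plan is to follow Alber's original line of argument from \cite{Alb1996}. The starting point is the identity
\[
\phi(x,y) = \norm{x}^2 - \norm{y}^2 - 2\langle x-y, Jy\rangle,
\]
immediate from the definition of $\phi$ together with $\langle y, Jy\rangle = \norm{y}^2$. This expresses $\phi(x,y)$ as twice the Bregman-type residual for the function $f = \tfrac{1}{2}\norm{\cdot}^2$, whose G\^ateaux derivative in a smooth Banach space is precisely $J$; equivalently $\phi(x,y) = 2D_f(x,y)$. All further estimates of $\phi$ are reduced to controlling the pairing $\langle x-y, Jy\rangle$ via the geometric moduli $\delta_X$ and $\rho_X$.

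The first step is homogenization. Using $J(\lambda z) = \lambda Jz$ for $\lambda > 0$ one obtains $\phi(\lambda x, \lambda y) = \lambda^2\phi(x,y)$, so substituting $\xi = x/C$ and $\eta = y/C$ with $C = \sqrt{(\norm{x}^2+\norm{y}^2)/2}$ reduces the problem to the normalized case $\norm{\xi}^2 + \norm{\eta}^2 = 2$ (in particular $\norm{\xi}, \norm{\eta} \leq \sqrt{2}$), where it suffices to show
\[
8\,\delta_X(\norm{\xi-\eta}/4) \;\leq\; \phi(\xi,\eta) \;\leq\; 4\,\rho_X(4\norm{\xi-\eta}).
\]

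For the upper bound, I would unfold the definition of $\rho_X$ on the unit vector $z = (\xi+\eta)/\norm{\xi+\eta}$ together with the perturbation $h = (\xi-\eta)/\norm{\xi+\eta}$ (treating $\xi+\eta = 0$ separately), producing an inequality of the form
\[
\norm{\xi} + \norm{\eta} \;\leq\; \norm{\xi+\eta}\bigl(1 + \rho_X(\norm{\xi-\eta}/\norm{\xi+\eta})\bigr),
\]
and combine this with the identity for $\phi$ together with a careful estimate of the pairing $\langle \xi, J\eta\rangle$ in terms of $\norm{\xi+\eta}$ and $\norm{\eta}$ to reach $\phi(\xi,\eta) \leq 4\rho_X(4\norm{\xi-\eta})$. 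The lower bound is dual: applying the defining inequality for $\delta_X$ to the rescaled pair $\xi/\sqrt{2},\, \eta/\sqrt{2}$ lying in the unit ball gives
\[
\norm{\xi + \eta} \;\leq\; 2\sqrt{2}\bigl(1 - \delta_X(\norm{\xi-\eta}/\sqrt{2})\bigr),
\]
which pairs with the identity for $\phi$ and the standard estimate $2\langle \xi, J\eta\rangle \leq \norm{\xi+\eta}\cdot\norm{\eta}$ to extract the $8\delta_X$ lower bound.

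The ``In particular'' assertion follows by noting $C \leq b$ whenever $\norm{x}, \norm{y} \leq b$ and invoking the Figiel inequalities \cite{Fig1976}, which provide the rescaling estimates $\delta_X(\lambda\varepsilon) \geq (\lambda/L)\delta_X(\varepsilon)$ and $\rho_X(\lambda\tau) \leq \lambda L \rho_X(\tau)$ for $\lambda \in (0,1]$; these allow replacing $C$ by $b$ inside $\delta_X$ and $\rho_X$ at the cost of the multiplicative factor $L^{-1}$. The main obstacle is the precise bookkeeping linking $\langle \xi, J\eta\rangle$ to expressions in $\norm{\xi \pm \eta}$, together with the tracking of the absolute constants $8$, $4$, and the $1/4$, $4$ inside the arguments of $\delta_X$ and $\rho_X$ through the scaling $\xi = x/C$ and the specific normalizations chosen when invoking the two moduli; these constants must be verified by direct computation along the lines of Alber's original argument.
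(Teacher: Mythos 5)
The paper does not prove this proposition; it attributes the two-sided estimate directly to Alber~\cite{Alb1996} and the refinement with the constant $L$ to Figiel~\cite{Fig1976}, so there is no ``paper proof'' to compare your sketch against. Your reconstruction begins correctly — the identity $\phi(x,y)=\norm{x}^2-\norm{y}^2-2\langle x-y,Jy\rangle$ is right, the $2$-homogeneity $\phi(\lambda x,\lambda y)=\lambda^2\phi(x,y)$ from $J(\langle z)=\lambda Jz$ is right, and the reduction to $\norm{\xi}^2+\norm{\eta}^2=2$ and the two displayed consequences of the definitions of $\rho_X$ and $\delta_X$ are all sound. But two of the concrete ingredients you invoke are wrong.

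First, the ``standard estimate'' $2\langle\xi,J\eta\rangle\leq\norm{\xi+\eta}\norm{\eta}$ is false. Already in $\mathbb{R}$ (where $J=\mathrm{id}$) take $\xi=1.4$, $\eta=0.2$, so $\norm{\xi}^2+\norm{\eta}^2=2$; then $2\langle\xi,J\eta\rangle=0.56$ while $\norm{\xi+\eta}\norm{\eta}=1.6\cdot 0.2=0.32$. The correct way to bound the pairing through $\norm{\xi+\eta}$ is via $\langle\xi,J\eta\rangle=\langle\xi+\eta,J\eta\rangle-\norm{\eta}^2\leq\norm{\xi+\eta}\norm{\eta}-\norm{\eta}^2$, and the subsequent bookkeeping that turns this into $\phi(\xi,\eta)=2-2\langle\xi,J\eta\rangle\geq 8\delta_X(\norm{\xi-\eta}/4)$ is precisely the non-trivial part of Alber's argument; your sketch leaves it entirely implicit under ``careful estimate'' and ``verify by direct computation.'' Second, the Figiel inequalities you quote are stated with the wrong scaling: the relevant content is that $\delta_X(\varepsilon)/\varepsilon^2$ is nondecreasing and $\rho_X(\tau)/\tau^2$ is nonincreasing up to the universal constant $L$, i.e.\ the scaling is quadratic, not linear. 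Concretely one needs, for $0<\varepsilon\leq\eta\leq 2$, something of the form $\delta_X(\eta)/\eta^2\geq L^{-1}\delta_X(\varepsilon)/\varepsilon^2$; your linear statement $\delta_X(\lambda\varepsilon)\geq(\lambda/L)\delta_X(\varepsilon)$ neither matches Figiel's theorem nor produces the factors $C^2/b^2$ needed to pass from $8C^2\delta_X(\norm{x-y}/4C)$ to $2L^{-1}b^2\delta_X(\norm{x-y}/4b)$ (and analogously for $\rho_X$). You would need to repair both of these, and then actually carry out the pairing estimates, for the argument to stand.
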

From this, suitable moduli $\lambda,\Lambda$ can be immediately derived.
\begin{lemma}\label{lem:phimoduli}
Let $X$ be a uniformly convex and uniformly smooth Banach space and let $\eta:(0,2]\to (0,1]$ be a nondecreasing modulus of uniform convexity for $X$, i.e.
\[
\forall\varepsilon\in (0,2]\forall x,y\in X\left( \norm{x},\norm{y}\leq 1\land \norm{x-y}\geq\varepsilon\rightarrow\norm{\frac{x+y}{2}}\leq 1-\eta(\varepsilon)\right).
\]
Then, the functions
\begin{gather*}
\tilde{\Lambda}(\varepsilon,b)=\frac{\varepsilon}{16L^{-1}b},\\
\tilde{\lambda}(\varepsilon,b)=\frac{b^2}{L}\eta\left(\frac{\varepsilon}{4b}\right),
\end{gather*}
satisfy
\begin{gather*}
\norm{x-y}\leq\tilde{\Lambda}(\varepsilon,b)\to \phi(x,y)\leq\varepsilon,\\
\phi(x,y)\leq\tilde{\lambda}(\varepsilon,b)\to\norm{x-y}\leq\varepsilon,
\end{gather*}
for all $x,y\in X$ with $\norm{x},\norm{y}\leq b$. In particular, the functions
\begin{gather*}
\Lambda(k,b)=\ceil*{\frac{k+1}{16L^{-1}b}}\dotdiv 1,\\
\lambda(k,b)=\ceil*{\frac{L}{b^2}\left(\eta\left(\frac{1}{4b(k+1)}\right)\right)^{-1}}\dotdiv 1,
\end{gather*}
are moduli of consistency for $\phi$.
\end{lemma}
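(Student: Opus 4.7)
The plan is to prove the lemma in two parts: first the continuous versions ($\tilde{\Lambda}$ and $\tilde{\lambda}$) by direct application of Alber's proposition, then the integer moduli ($\Lambda$ and $\lambda$) by routine ceiling arithmetic.

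For $\tilde{\Lambda}$, I would start from Alber's upper bound $\phi(x,y)\leq 4L^{-1}b^2\rho_X(4\norm{x-y}/b)$, valid for $\norm{x},\norm{y}\leq b$, and combine it with the classical linear estimate $\rho_X(\tau)\leq\tau$ (which holds for every uniformly smooth Banach space). This immediately yields
\[
\phi(x,y)\leq 4L^{-1}b^2\cdot\frac{4\norm{x-y}}{b}=16L^{-1}b\norm{x-y},
\]
so that $\norm{x-y}\leq\varepsilon/(16L^{-1}b)$ directly produces $\phi(x,y)\leq\varepsilon$, which is precisely the first assertion with $\tilde{\Lambda}(\varepsilon,b)=\varepsilon/(16L^{-1}b)$.

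For $\tilde{\lambda}$, I would argue by contraposition. Suppose $\norm{x-y}>\varepsilon$ with $\norm{x},\norm{y}\leq b$. Alber's lower bound gives $\phi(x,y)\geq 2L^{-1}b^2\delta_X(\norm{x-y}/4b)$. The assumption that $\eta$ is a modulus of uniform convexity means precisely $\delta_X(\varepsilon')\geq\eta(\varepsilon')$ for $\varepsilon'\in (0,2]$, and by the monotonicity of $\eta$ we then obtain
\[
\phi(x,y)\geq 2L^{-1}b^2\eta(\norm{x-y}/4b)\geq 2L^{-1}b^2\eta(\varepsilon/4b)>L^{-1}b^2\eta(\varepsilon/4b)=\tilde{\lambda}(\varepsilon,b),
\]
contradicting $\phi(x,y)\leq\tilde{\lambda}(\varepsilon,b)$. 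The only mild subtlety here is the edge case $\varepsilon/(4b)>2$, but there $\norm{x-y}\leq 2b<\varepsilon$ already holds automatically, so the implication is vacuous.

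For the integer versions, setting $\varepsilon=1/(k+1)$ in the continuous statements shows that it suffices to pick $\Lambda(k,b)$ and $\lambda(k,b)$ large enough so that $1/(\Lambda(k,b)+1)\leq\tilde{\Lambda}(1/(k+1),b)$ and $1/(\lambda(k,b)+1)\leq\tilde{\lambda}(1/(k+1),b)$ respectively, which is exactly what the stated ceiling formulas accomplish (after correctly interpreting the truncated subtraction $\dotdiv 1$). I don't anticipate any real obstacle: Alber's proposition does all of the analytic heavy lifting, and the main care needed is in matching the constants in the conversion to integer moduli and in verifying that $\eta$ is evaluated only on admissible inputs.
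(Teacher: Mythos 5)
Your argument is correct and matches the paper's proof almost line by line: both deduce the $\tilde{\Lambda}$-implication from Alber's upper bound together with $\rho_X(\tau)\leq\tau$, and both deduce the $\tilde{\lambda}$-implication from Alber's lower bound together with $\delta_X\geq\eta$ and the monotonicity and strict positivity of $\eta$ (you phrase the latter as a contradiction, the paper argues directly, but the manipulations are identical), with the integer conversion treated as routine in both. One caveat on the ``matching the constants'' you rightly flag as the place requiring care: carrying out the conversion shows one needs $\Lambda(k,b)+1\geq 16L^{-1}b(k+1)$, whereas the displayed $\ceil*{(k+1)/(16L^{-1}b)}\dotdiv 1$ gives roughly $(k+1)L/(16b)$, i.e.\ the fraction in the paper's $\Lambda$-formula appears to be inverted (the $\lambda$-formula checks out), so the ``routine ceiling arithmetic'' is worth doing once rather than taking on trust.
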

\begin{proof}
Note that $\eta(\varepsilon)\leq\delta_X(\varepsilon)$ and that $\rho_X(\varepsilon)\leq\varepsilon$.  Therefore, for one, if
\[
\norm{x-y}\leq\tilde{\Lambda}(\varepsilon,b)=\frac{\varepsilon}{16L^{-1}b},
\]
then we get
\begin{align*}
\phi(x,y)&\leq 4L^{-1}b^2\rho_X(4\norm{x-y}/b)\\
&\leq 4L^{-1}b4\norm{x-y}\\
&=16L^{-1}b\norm{x-y}\\
&\leq\varepsilon.
\end{align*}
For another, if
\[
\phi(x,y)\leq\tilde{\lambda}(\varepsilon,b)=\frac{b^2}{L}\eta\left(\frac{\varepsilon}{4b}\right),
\]
then we get
\begin{align*}
2L^{-1}b^2\eta(\norm{x-y}/4b)&\leq 2L^{-1}b^2\delta_X(\norm{x-y}/4b)\\
&\leq\phi(x,y)\\
&<\frac{2b^2}{L}\eta\left(\frac{\varepsilon}{4b}\right).
\end{align*}
Thus, in particular, we have $\eta(\norm{x-y}/4b)<\eta(\varepsilon/4b)$ and thus $\norm{x-y}/4b<\varepsilon/4b$, i.e.\ $\norm{x-y}\leq\varepsilon$, as $\eta$ is nondecreasing.
\end{proof}

As discussed before, from $\lambda$ and $\Lambda$ we can immediately derive 
\[
\theta(k,b)=\ceil*{\frac{L}{b^2}\left(\eta\left(\frac{1}{8b(k+1)}\right)\right)^{-1}}\dotdiv 1
\]
as a modulus for the triangularity and
\[
\widetilde{\theta}(k,b)=\ceil*{\frac{L}{b^2}\left(\eta\left(\frac{1}{8b(\ceil*{\frac{k+1}{16L^{-1}b}}+1)}\right)\right)^{-1}}\dotdiv 1
\]
as a modulus for the weak triangularity of $\phi$.

\section{Moduli of regularity and rates of convergence}

To establish the main results for rates of convergence under a metric regularity assumption, we now follow the setup established in \cite{KLAN2019}. Between this and the previous, there are some notational conflicts but the context will make it clear to which objects we refer. 

\medskip

The first main difference is that while the previous setup operated on approximations $AF_k$ of a solution set $F$ governed by natural numbers $k$, the setup from \cite{KLAN2019} works with a general function $F:X\to\overline{\mathbb{R}}$, mapping into the extended reals, for measuring the ``approximateness'' of any point $x$ to the solution set now represented by $\mathrm{zer}F$, i.e.\ instead of finding points in a set $F$ we are now interested in finding zeros of the function $F$ and $\vert F(x)\vert <\varepsilon$ is to be understood as $x$ being an $\varepsilon$-approximate solution.

\medskip

The whole setup of Fej\'er monotonicity is naturally formulated in this framework with Fej\'er monotonicity now understood over $\mathrm{zer}F$ and with approximate solutions understood in the previously explained sense instead of the sets $AF_k$.

\medskip

The main notion on which we rely in the following is that of a modulus of regularity from \cite{KLAN2019}:

\begin{definition}[essentially \cite{KLAN2019}]
Let $F:X\to\overline{\mathbb{R}}$ be a given function. For a set $S\subseteq X$, a function $\rho:\mathbb{R}^*_+\to\mathbb{R}^*_+$ is called a modulus of regularity for $F$ w.r.t.\ $\mathrm{zer}F$ and $S$ if for all $\varepsilon>0$ and all $x\in S$:
\[
\vert F(x)\vert<\rho(\varepsilon)\rightarrow\mathrm{dist}(x,\mathrm{zer}F)<\varepsilon.
\]
\end{definition}

In that way, a modulus of regularity, given an error $\varepsilon>0$, allows for the construction of an error $\rho(\varepsilon)$ such that being a $\rho(\varepsilon)$-approximate solution implies that one is $\varepsilon$-close to an actual solution. We refer to \cite{KLAN2019} for a discussion on how various notions of regularity known from optimization and nonlinear analysis fit into this framework and do not discuss this here any further.

\begin{remark}
Note that in \cite{KLN2018}, the above notion of a modulus of regularity is only introduced relative to the specific choice of $S=\overline{B}(z,r)$ for a given $z\in\mathrm{zer}F$ and $r>0$ but we here allow general sets $S$ (mainly out of simplicity so that the comparisons to moduli of regularity relative to a distance function $\phi$ introduced later become simpler and do not require tedious translations between metric balls and balls relative to $\phi$). In particular, we want to note that in the context of metrically bounded sets $S$, the uniformity of the modulus $\rho$ regarding the elements $x\in S$ is in particular suggested by the logical metatheorems of proof mining. Although this is not the case for general sets $S$, we here nevertheless included this uniformity in the above definition of the moduli of regularity. Note that in all case studies presented later, the sets are naturally bounded anyhow.
\end{remark}

In the context of the mapping $\phi$, we here consider the following relativized notion:

\begin{definition}
Let $\phi:X\times X\to\mathbb{R}_+$ be any mapping. For a set $S\subseteq X$, a function $\rho:\mathbb{R}^*_+\to\mathbb{R}^*_+$ is called a modulus of $\phi$-regularity for $F$ w.r.t.\ $\mathrm{zer}F$ and $S$ if for all $\varepsilon>0$ and all $x\in S$:
\[
\vert F(x)\vert<\rho(\varepsilon)\rightarrow\mathrm{dist}_\phi(x,\mathrm{zer}F)<\varepsilon.
\]
Here, $\mathrm{dist}_\phi$ is the $\phi$-distance function defined by 
\[
\mathrm{dist}_\phi(x,\mathrm{zer}F)=\inf_{y\in\mathrm{zer}F}\phi(y,x).
\]
\end{definition}

We can then obtain the following generalization of the main result of \cite{KLAN2019} on rates of convergence derived from moduli of regularity. We state the result already in its full generality. To that end, the sequence is only assumed to be partially $(\phi_n)$-$(G,H)$-quasi-Fej\'er monotone w.r.t.\ $F$, $f$ and $(\varepsilon_n)$ in the sense of before. In that way, it also generalizes the result given in \cite{Pis2022} which extended Theorem 4.1 of \cite{KLAN2019} by error terms to treat quasi-Fej\'er monotone sequences.

\medskip

In this setup of working with real $\varepsilon$'s as representations for errors, the previous moduli $\alpha_G$, $\beta_H$ as well as $\pi$ and $\theta$ then need to be real-valued in the sense that now:
\begin{enumerate}
\item A modulus of uniform weak triangularity or of uniform triangularity is now a function $\theta:\mathbb{R}^*_+\to\mathbb{R}^*_+$ that satisfies 
\[
\phi(y,x),\phi(y,z)\leq\theta(\varepsilon)\to \phi(x,z)\leq\varepsilon\text{ or }\phi(y,x),\phi(y,z)\leq\theta(\varepsilon)\to d(x,z)\leq\varepsilon
\]
for any $\varepsilon\in\mathbb{R}^*_+$ and any $x,y,z\in X$, respectively.
\item $G$- and $H$-moduli are now functions $\alpha_G,\beta_H:\mathbb{R}^*_+\to\mathbb{R}^*_+$ that satisfy
\[
a\leq\alpha_G(\varepsilon)\rightarrow G(a)\leq\varepsilon
\]
as well as
\[
H(a)\leq\beta_H(\varepsilon)\rightarrow a\leq\varepsilon
\]
for any $\varepsilon\in\mathbb{R}^*_+$ and any $a\in\mathbb{R}_+$, respectively.
\item A rate for $\phi_n\nearrow\phi$ uniformly is now a function $\pi:\mathbb{R}_+^*\to\mathbb{N}$ that satisfies
\[
\forall n\geq \pi(\varepsilon)\forall x,y \in X\left( \phi_n(x,y)\leq \phi(x,y)+\varepsilon\right)
\]
for all $\varepsilon\in\mathbb{R}^*_+$.
\end{enumerate}

Also, we now understand an $A$ witnessing $\psi\preceq^A\phi$ to be a real-valued function $A:\mathbb{R}^*_+\to \mathbb{R}^*_+$ with
\[
\psi\preceq^A\phi\equiv \forall\varepsilon>0\forall x,y\in X\left( \phi(x,y)\leq A(\varepsilon)\to \psi(x,y)\leq\varepsilon\right).
\]

\begin{theorem}\label{thm:modregthm}
Let $(X,d)$ be a metric space and $(\phi_n)$ be a sequence of functions $\phi_n:X\times X\to\mathbb{R}_+$ such that $\psi\preceq^A\phi_n$ for $A:\mathbb{R}^*_+\to \mathbb{R}^*_+$ and where $\psi:X\times X\to\mathbb{R}_+$ is uniformly weakly triangular with modulus $\theta$ and such that $\phi_n\nearrow\phi$ uniformly with a rate $\pi$ for a distance $\phi:X\times X\to\mathbb{R}_+$. Let $F:X\to\overline{\mathbb{R}}$ be given with $\mathrm{zer}F\neq\emptyset$. Let $f$ be a nondecreasing function with $f(n)\leq n$ and let $\kappa$ be a rate of divergence for $f$. Assume that $(x_n)$ is partially $(\phi_n)$-$(G,H)$-quasi-Fej\'er monotone w.r.t.\ $F$, $f$ and $(\varepsilon_n)$. Let $\alpha_G$ be a G-modulus for $G$ and $\beta_H$ be a H-modulus for $H$. Suppose there is a $\tau:\mathbb{R}^*_+\times \mathbb{N}\to\mathbb{N}$ such that
\[
\forall\delta >0\forall n\in\mathbb{N}\exists N\in [n;\tau(\delta,n)](\vert F(x_{2f(N)})\vert <\delta).
\]
Let $\rho$ be a modulus of $\phi$-regularity for $F$ w.r.t.\ $\mathrm{zer}F$ and $S$ such that $(x_n)\subseteq S$ and let $\xi$ be a Cauchy rate for $\sum\varepsilon_{n}<\infty$.

Then $(x_n)$ is $\psi$-Cauchy and
\[
\forall \delta>0\forall n,m\geq 2\mu(\delta)\;(\psi(x_n,x_m)<\delta)
\]
with 
\[
\mu(\delta)=\tau\left(\rho\left(\alpha_G\left(\frac{\beta_H(A(\theta(\delta))
)}{2}\right)/2\right),\max\left\{\kappa\left(\xi\left(\frac{\beta_H(A(\theta(\delta)))}{2}\right)\right),\kappa\left(\pi\left(\alpha_G\left(\frac{\beta_H(A(\theta(\delta)))}{2}\right)/2\right)\right)\right\}\right).
\]
If $\psi$ is uniformly triangular with a modulus $\theta$, then $(x_n)$ is Cauchy with the same rate.
\end{theorem}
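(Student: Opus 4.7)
The plan is to produce, for each $\delta>0$, a single anchor point $p\in\mathrm{zer}F$ such that $\psi(p,x_n)\leq\theta(\delta)$ holds simultaneously for all $n\geq\mu(\delta)$; once this uniform closeness to $p$ is established, the uniform weak triangularity of $\psi$ with modulus $\theta$ immediately yields $\psi(x_n,x_m)\leq\delta$ for all $n,m\geq\mu(\delta)$, and under the triangular strengthening the same chain (replacing $\psi$ by $d$ in the conclusion of triangularity) delivers $d(x_n,x_m)\leq\delta$. So the real work is (i) constructing $p$ and (ii) bounding $\phi_n(p,x_n)$ uniformly for $n\geq\mu(\delta)$, split into even and odd indices.

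To produce $p$, abbreviate $\varepsilon_0:=\beta_H(A(\theta(\delta)))/2$ and $\varepsilon_1:=\alpha_G(\varepsilon_0)/2$, and set $n_1:=\max\{\kappa(\xi(\varepsilon_0)),\,\kappa(\pi(\varepsilon_1))\}$. The hypothesis on $\tau$ at level $\rho(\varepsilon_1)$ and starting index $n_1$ supplies $N_0\in[n_1,\tau(\rho(\varepsilon_1),n_1)]$ with $|F(x_{2f(N_0)})|<\rho(\varepsilon_1)$, and the assumed modulus of $\phi$-regularity then produces $p\in\mathrm{zer}F$ with $\phi(p,x_{2f(N_0)})<\varepsilon_1$. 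The role of $n_1$ is to push, via the rate of divergence $\kappa$, the index $f(N_0)$ past three thresholds simultaneously: first $f(N_0)\geq\pi(\varepsilon_1)$, so that the uniform convergence $\phi_n\nearrow\phi$ gives $\phi_{2f(N_0)}(p,x_{2f(N_0)})\leq\phi(p,x_{2f(N_0)})+\varepsilon_1<2\varepsilon_1=\alpha_G(\varepsilon_0)$, and hence by the $G$-modulus $G(\phi_{2f(N_0)}(p,x_{2f(N_0)}))\leq\varepsilon_0$; and second $f(N_0)\geq\xi(\varepsilon_0)$, making all the relevant tail sums $\sum_{i\geq f(N_0)}\varepsilon_i\leq\varepsilon_0$.

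The propagation step uses both halves of the partial Fej\'er hypothesis anchored at $2f(N_0)$. For even $n=2k\geq\mu(\delta)=2f(\tau(\cdot))\geq 2f(N_0)$, one has $k\geq f(N_0)$ and the $(\phi_{2n})$-$(G,H)$-quasi-Fej\'er monotonicity of $(x_{2n})$ together with the tail bound forces $H(\phi_{2k}(p,x_{2k}))\leq G(\phi_{2f(N_0)}(p,x_{2f(N_0)}))+\sum_{i=f(N_0)}^{k-1}\varepsilon_{2i}\leq 2\varepsilon_0=\beta_H(A(\theta(\delta)))$. For odd $n=2k+1\geq\mu(\delta)$, the $(\phi_n)$-$(G,H)$-quasi-$f$-monotonicity, instantiated at parameter $N_0$ and step $k-N_0$, gives the identical starting bound $G(\phi_{2f(N_0)}(p,x_{2f(N_0)}))\leq\varepsilon_0$ and tail bound $\sum_{i=f(N_0)}^{k}\varepsilon_i\leq\varepsilon_0$, so again $H(\phi_{2k+1}(p,x_{2k+1}))\leq\beta_H(A(\theta(\delta)))$. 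In both cases the $H$-modulus turns this into $\phi_n(p,x_n)\leq A(\theta(\delta))$, and the comparison $\psi\preceq^A\phi_n$ produces $\psi(p,x_n)\leq\theta(\delta)$, as required.

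The main obstacle is the simultaneous budgeting of precision: the three quantitative witnesses contributing error at the anchor index $2f(N_0)$ (the regularity modulus $\rho$, the rate of uniform convergence $\pi$ for $\phi_n\nearrow\phi$, and the Cauchy rate $\xi$ for $\sum_n\varepsilon_n$) must each be allotted at most a quarter of $\beta_H(A(\theta(\delta)))$ once pushed through $\alpha_G$ and summed, which is exactly the bookkeeping encoded by the two successive halvings in the definition of $\mu(\delta)$ and by the single rate of divergence $\kappa$ that routes all three through the one base index. A secondary bookkeeping point is that the step $m=k-N_0$ employed in the odd case must be non-negative for the $f$-monotonicity to fire, which, together with the assumption $f\leq\mathrm{id}$, pins down the precise form $\mu(\delta)=2f(\tau(\cdot))$ and the use of a $\max$ inside its definition.
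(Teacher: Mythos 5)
Your strategy --- construct a single anchor $p\in\mathrm{zer}F$ from the hypotheses on $\tau$ and $\rho$, push $\phi_{2f(N_0)}(p,x_{2f(N_0)})$ below $\alpha_G(\beta_H(A(\theta(\delta)))/2)$ using the rate $\pi$ for $\phi_n\nearrow\phi$, have the error tail $\sum_{i\geq f(N_0)}\varepsilon_i$ absorb the other half of $\beta_H(A(\theta(\delta)))$, and then propagate to all large $n$ via the even quasi-Fej\'er inequality and the odd $f$-monotonicity inequality, finally closing with the $H$-modulus, $\preceq^A$, and (weak) triangularity --- is exactly the paper's argument, and your precision budgeting in the second paragraph matches the definition of $\mu(\delta)$ essentially term by term.

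Your final ``secondary bookkeeping'' remark, however, is not correct, and in fact it puts a finger on a real lacuna that the paper's own proof glosses over. You assert that non-negativity of the step $m=k-N_0$ for odd $n=2k+1\geq\mu(\delta)$ is guaranteed by the form $\mu(\delta)=2f(\tau(\cdot))$ together with $f\leq\mathrm{id}$. It is not: $n\geq 2f(\tau(\cdot))$ only yields $k\geq f(\tau(\cdot))$, whereas $N_0$ is merely known to lie in $[\max\{\kappa(\cdot),\kappa(\cdot)\};\tau(\cdot)]$, and $f\leq\mathrm{id}$ makes $f(\tau(\cdot))$ \emph{small} relative to $\tau(\cdot)\geq N_0$, not large. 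Concretely, with $f(n)=\lfloor n/10\rfloor$ one can easily have $2f(N_0)\leq n<2N_0+1$ and then there is no admissible instantiation $n'+m'=k$, $m'\geq0$, $f(n')=f(N_0)$ of the $f$-monotonicity that hits the controlled index $2f(N_0)$. The paper's proof simply states the odd bound ``for $n\geq 2f(N)$'' without exhibiting parameters, so it shares this gap (contrast with Theorem~\ref{thm:quantThmFirst}, where the conclusion is carefully anchored at $2n_I$, not $2f(n_I)$). The clean repair is to take $\mu(\delta)=2\tau(\cdot)$ (equivalently $2\tau(\cdot)+1$), which gives $k\geq\tau(\cdot)\geq N_0$ for odd $n\geq\mu(\delta)$; with that change the rest of your argument goes through verbatim, and the even case still works since $\tau(\cdot)\geq N_0\geq f(N_0)$.
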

\begin{proof}
Let $\delta>0$ be given. By assumption we have
\begin{gather*}
\exists N\in \left[\max\left\{\kappa\left(\xi\left(\frac{\beta_H(A(\theta(\delta)))}{2}\right)\right),\kappa\left(\pi\left(\alpha_G\left(\frac{\beta_H(A(\theta(\delta)))}{2}\right)/2\right)\right)\right\};\mu(\delta)\right]\\
\left(\vert F(x_{2f(N)})\vert <\rho\left(\alpha_G\left(\frac{\beta_H(A(\theta(\delta)))}{2}\right)/2\right)\right).
\end{gather*}
As $\rho$ is a corresponding modulus of regularity and since $(x_n)\subseteq S$, we get
\[
\mathrm{dist}_\phi(x_{2f(N)},\mathrm{zer} F)<\alpha_G\left(\frac{\beta_H(A(\theta(\delta)))}{2}\right)/2
\]
and therefore, there exists a $y\in\mathrm{zer} F$ with $\phi(y,x_{2f(N)})<\alpha_G\left(\frac{\beta_H(A(\theta(\delta)))}{2}\right)/2$. As
\[
N\geq \kappa\left(\pi\left(\alpha_G\left(\frac{\beta_H(A(\theta(\delta)))}{2}\right)/2\right)\right)
\] 
we obtain 
\[
f(N)\geq \pi\left(\alpha_G\left(\frac{\beta_H(A(\theta(\delta)))}{2}\right)/2\right)
\]
and thus we get $\phi_{2f(N)}(y,x_{2f(N)})\leq \alpha_G\left(\frac{\beta_H(A(\theta(\delta)))}{2}\right)$ and thus $G(\phi_{2f(N)}(y,x_{2f(N)}))\leq\frac{\beta_H(A(\theta(\delta)))}{2}$. As further
\[
N\geq\kappa\left(\xi\left(\frac{\beta_H(A(\theta(\delta)))}{2}\right)\right),
\]
we obtain
\[
N\geq f(N)\geq\xi\left(\frac{\beta_H(A(\theta(\delta)))}{2}\right)
\]
and thus for $n\geq 2N$, by $(\phi_{2n})$-$(G,H)$-quasi-Fej\'er monotonicity, we obtain
\begin{align*}
H(\phi_n(y,x_n))&\leq G(\phi_{2f(N)}(y,x_{2f(N)}))+\sum_{i=f(N)}^\infty\varepsilon_{i}\\
&\leq \beta_H(A(\theta(\delta)))/2 + \beta_H(A(\theta(\delta)))/2\\
&\leq\beta_H(A(\theta(\delta)))
\end{align*}
for $n\geq 2N$ even and by $(\phi_n)$-$(G,H)$-quasi-$f$-monotonicity, we obtain similarly
\[
H(\phi_n(y,x_n))\leq G(\phi_{2f(N)}(y,x_{2f(N)}))+\sum_{i=f(N)}^\infty\varepsilon_{i}\leq\beta_H(A(\theta(\delta)))
\]
for $n\geq 2N$ odd. Thus $\phi_n(y,x_n)\leq A(\theta(\delta))$ for any $n\geq 2N$. This yields $\psi(y,x_n)\leq\theta(\delta)$ for any $n\geq 2N$ (and so for any $n\geq 2\mu(\delta)$ since $N\leq \mu(\delta)$). If $\psi$ is uniformly weakly triangular with modulus $\theta$, we in particular derive from this that $\psi(x_n,x_m)\leq\delta$ for any $n,m\geq 2\mu(\delta)$. If $\psi$ is even uniformly triangular with modulus $\theta$, then we derive from this that $d(x_n,x_m)\leq\delta$ for any $n,m\geq 2\mu(\delta)$.
\end{proof}

\begin{remark}\label{rem:noComp}
Note that in the context of the above Theorem \ref{thm:modregthm}, we in particular get that the sequence in question strongly converges without any additional compactness assumption.
\end{remark}

We again explicitly list the case where all $\phi_n$ coincide with a single function $\phi$:

\begin{theorem}\label{thm:modregthm}
Let $(X,d)$ be a metric space and $\phi:X\times X\to\mathbb{R}_+$ be a function such that $\psi\preceq^A\phi$ for $A:\mathbb{R}^*_+\to \mathbb{R}^*_+$ and for $\psi:X\times X\to\mathbb{R}_+$ being uniformly weakly triangular with modulus $\theta$. Let $F:X\to\overline{\mathbb{R}}$ be given with $\mathrm{zer}F\neq\emptyset$. Let $f$ be a nondecreasing function with $f(n)\leq n$ and let $\kappa$ be a rate of divergence for $f$. Assume that $(x_n)$ is partially $(\phi_n)$-$(G,H)$-quasi-Fej\'er monotone w.r.t.\ $F$, $f$ and $(\varepsilon_n)$. Let $\alpha_G$ be a G-modulus for $G$, $\beta_H$ be an $H$-modulus for $H$. Suppose there is a $\tau:\mathbb{R}^*_+\times\mathbb{N}\to\mathbb{N}$ such that
\[
\forall\delta >0\forall n\in\mathbb{N}\exists N\in [n;\tau(\delta,n)](\vert F(x_{2f(N)})\vert <\delta).
\]
Let $\rho$ be a modulus of $\phi$-regularity for $F$ w.r.t.\ $\mathrm{zer}F$ and $S$ such that $(x_n)\subseteq S$ and let $\xi$ be a Cauchy rate for $\sum\varepsilon_{n}<\infty$.
Then $(x_n)$ is $\psi$-Cauchy and
\[
\forall \delta>0\forall n,m\geq 2\mu(\delta)\;(\psi(x_n,x_m)<\delta)
\]
with 
\[
\mu(\delta)=\tau\left(\rho\left(\alpha_G\left(\frac{\beta_H(A(\theta(\delta)))}{2}\right)/2\right),\kappa\left(\xi\left(\frac{\beta_H(A(\theta(\delta)))}{2}\right)\right)\right).
\]
If $\psi$ is uniformly triangular with a modulus $\theta$, then $(x_n)$ is Cauchy with the same rate.

Lastly, if $\varepsilon_n=0$ for any $n\in\mathbb{N}$, then the assumption that $f(n)\to\infty$ for $n\to\infty$ can be omitted and it suffices to assume that $\tau:\mathbb{R}^*_+\to\mathbb{N}$ is such that
\[
\forall\delta >0\exists N\leq \tau(\delta)(\vert F(x_{2f(N)})\vert <\delta),
\]
to derive that $(x_n)$ is $\psi$-Cauchy or Cauchy, depending on the assumption on $\psi$, with a rate $2\mu'(\delta)$ where
\[
\mu'(\delta)=\tau\left(\rho\left(\alpha_G\left(\frac{\beta_H(A(\theta(\delta)))}{2}\right)/2\right)\right)
\]
and where the other constants are as before.
\end{theorem}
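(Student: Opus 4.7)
The plan is to derive this theorem as a direct specialization of the previous Theorem \ref{thm:modregthm} (the variable-$(\phi_n)$ version) to the case where all $\phi_n$ coincide with the single function $\phi$. In that setting, $\phi_n \nearrow \phi$ uniformly with the trivial rate $\pi \equiv 0$, since $\phi_n(x,y) = \phi(x,y) \leq \phi(x,y) + \varepsilon$ holds automatically. Inspecting the previous proof, the only step where the rate $\pi$ is invoked is the conversion $\phi(y,x_{2f(N)}) \rightarrow \phi_{2f(N)}(y,x_{2f(N)})$, which is now just the identity. Consequently, the requirement that $N \geq \kappa(\pi(\alpha_G(\beta_H(A(\theta(\delta)))/2)/2))$ is vacuous, so the inner $\max$ collapses to the single term $\kappa(\xi(\beta_H(A(\theta(\delta)))/2))$, producing exactly the simplified $\mu(\delta)$ stated in this theorem.

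More explicitly, I would carry out the proof by first fixing $\delta > 0$ and, using the $\tau$-bound, choosing
\[
N \in \bigl[\kappa(\xi(\beta_H(A(\theta(\delta)))/2));\; \mu(\delta)/(2f)\bigr]
\]
with $|F(x_{2f(N)})| < \rho(\alpha_G(\beta_H(A(\theta(\delta)))/2)/2)$. The modulus of $\phi$-regularity then yields $y \in \mathrm{zer}F$ with $\phi(y,x_{2f(N)}) < \alpha_G(\beta_H(A(\theta(\delta)))/2)$; applying $G$ gives $G(\phi(y,x_{2f(N)})) \leq \beta_H(A(\theta(\delta)))/2$. The partial quasi-Fej\'er monotonicity (and quasi-$f$-monotonicity for the odd indices), combined with the Cauchy rate $\xi$ controlling $\sum_{i \geq f(N)} \varepsilon_i$, then yields $H(\phi(y,x_n)) \leq \beta_H(A(\theta(\delta)))$ for all $n \geq 2f(N)$. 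Unwrapping by the $H$-modulus gives $\phi(y,x_n) \leq A(\theta(\delta))$; the assumption $\psi \preceq^A \phi$ converts this to $\psi(y,x_n) \leq \theta(\delta)$, and finally the (weak) triangularity of $\psi$ with modulus $\theta$ yields $\psi(x_n,x_m) \leq \delta$ (respectively $d(x_n,x_m) \leq \delta$ if $\psi$ is uniformly triangular) for all $n,m \geq \mu(\delta)$.

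For the last clause where $\varepsilon_n = 0$ for all $n$, the Fej\'er-type inequalities hold without error terms, so no Cauchy rate $\xi$ is needed (we may formally take $\xi \equiv 0$). In particular, the requirement $N \geq \kappa(\xi(\cdot))$ also disappears, so neither the divergence assumption on $f$ nor the rate $\kappa$ are needed to make the summability argument go through; correspondingly, the hypothesis on $\tau$ can be weakened to merely providing an upper bound on a single witness $N$ with $|F(x_{2f(N)})| < \delta$. The resulting rate $\mu(\delta) = 2f(\tau(\rho(\alpha_G(\beta_H(A(\theta(\delta)))/2)/2)))$ then follows by the same argument as above, with $y$ produced from this single witness and the error-free Fej\'er inequality propagating the estimate $H(\phi(y,x_n)) \leq \beta_H(A(\theta(\delta)))$ to all $n \geq 2f(N)$.

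I do not expect any serious obstacle: the entire content is a bookkeeping specialization of the previous theorem, and the only mildly delicate point is confirming that in the $\varepsilon_n = 0$ variant the divergence of $f$ really is no longer required, which is clear from the fact that the only use of $f(N) \to \infty$ in the earlier proof was to push the tail $\sum_{i \geq f(N)} \varepsilon_i$ below a prescribed threshold via $\xi$.
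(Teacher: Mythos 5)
Your proposal is correct and takes the same route the paper intends: the paper itself states ``We omit the proof as it is rather immediate,'' meaning exactly this specialization of the preceding variable-$(\phi_n)$ theorem. Your observations---that $\pi\equiv 0$ is a valid rate when $\phi_n=\phi$, so the $\kappa(\pi(\cdot))$ branch of the $\max$ in $\hat k$ is superfluous (since the step from $\phi$ to $\phi_{2f(N)}$ is now an identity, the constraint $f(N)\geq\pi(\cdot)$ is never used); that this collapses the rate to the stated $\mu(\delta)$; and that when $\varepsilon_n\equiv 0$ the tail control via $\xi$ and hence the divergence of $f$ via $\kappa$ become unnecessary, allowing the weaker one-shot form of $\tau$---are all accurate and complete. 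The explicit chain $\rho\to\alpha_G\to G\to$ Fej\'er/$f$-monotonicity $\to H\to\beta_H\to A\to\psi\to\theta$ reproduces the earlier proof verbatim with the $\phi_n\nearrow\phi$ step deleted, which is exactly what is needed.
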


We omit the proof as it is rather immediate.

\medskip

Again, if we have moduli relating the distance function $\phi$ to the usual metric distance in the sense that we have moduli $\lambda,\Lambda:\mathbb{R}^*_+\times\mathbb{N}\to\mathbb{R}^*_+$ such that
\begin{gather*}
\phi(x,y)\leq\lambda(\varepsilon,b)\to d(x,y)\leq\varepsilon,\\
d(x,y)\leq\Lambda(\varepsilon,b)\to \phi(x,y)\leq\varepsilon,
\end{gather*}
for any $\varepsilon>0$ and any $x,y\in X$ such that $d(a,x),d(a,y)\leq b$, then we can modify the above result to only require the usual standard metric versions of the assumptions. Similarly, we also in this case call $\phi$ consistent with moduli $\lambda,\Lambda:\mathbb{R}^*_+\times\mathbb{N}\to\mathbb{R}^*_+$ if these satisfy the above inequalities and we call $\phi$ uniformly consistent with moduli $\lambda,\Lambda:\mathbb{R}^*_+\to\mathbb{N}$ if these satisfy the above inequalities without any reference to $b$.

\medskip

As before, we can again immediately derive a corresponding moduli $\theta$ and $\widetilde{\theta}$ from the moduli $\lambda,\Lambda$ for the triangularity and weak triangularity of $\phi$, respectively: Define
\[
\theta(\varepsilon,b)=\lambda\left(\frac{\varepsilon}{2},b\right).
\]
Then for $x,y,z\in X$ such that $d(a,x),d(a,y),d(a,z)\leq b$, assume that we have $\phi(y,x),\phi(y,z)\leq \theta(\varepsilon,b)$. Then in particular $d(y,x),d(y,z)\leq \varepsilon/2$ by assumption on $\lambda$ and thus $d(x,z)\leq \varepsilon$. Thus $\theta$ is a modulus for the triangularity of $\theta$. It is then clear that
\[
\widetilde{\theta}(\varepsilon,b)=\lambda\left(\frac{\Lambda(\varepsilon,b)}{2},b\right)
\]
is a corresponding modulus for the weak triangularity.

\medskip

Similarly, we can provide a translation for the main notion of a modulus of $\phi$-regularity to its metric counterpart.
\begin{lemma}
Let $\phi:X\times X\to\mathbb{R}_+$ be a uniformly consistent mapping with moduli $\lambda,\Lambda:\mathbb{R}^*_+\to\mathbb{R}^*_+$. Let further $z\in\mathrm{zer}F$, $r>0$ and $\rho:\mathbb{R}^*_+\to\mathbb{R}^*_+$ be a modulus of regularity for $F$ w.r.t.\ $\mathrm{zer}F$ and $S$. Then $\rho'(\varepsilon)=\rho(\Lambda(\varepsilon/2))$ is a modulus of $\phi$-regularity for $F$ w.r.t.\ $\mathrm{zer}F$ and $S$.
\end{lemma}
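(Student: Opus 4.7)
The plan is to chain the given modulus of regularity $\rho$ with the modulus $\Lambda$ of uniform consistency, converting a statement about the ordinary metric distance to the solution set into one about the $\phi$-distance. Since the defining inequality of a modulus of regularity is strict while the one of uniform consistency is non-strict, the whole point of the extra factor $1/2$ in $\rho'(\varepsilon)=\rho(\Lambda(\varepsilon/2))$ is to produce a slack that lets us turn the non-strict consequence of consistency back into a strict one.

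Concretely, I would proceed as follows. Fix $\varepsilon>0$ and $x\in S$ with $\vert F(x)\vert<\rho'(\varepsilon)=\rho(\Lambda(\varepsilon/2))$. By the definition of $\rho$ as a modulus of regularity for $F$ w.r.t.\ $\mathrm{zer}F$ and $S$, this yields $\mathrm{dist}(x,\mathrm{zer}F)<\Lambda(\varepsilon/2)$, so there exists $y\in\mathrm{zer}F$ with $d(y,x)<\Lambda(\varepsilon/2)$. Using that $d$ is symmetric, the uniform consistency of $\phi$ via the modulus $\Lambda$ then gives $\phi(y,x)\leq\varepsilon/2$. Hence
\[
\mathrm{dist}_\phi(x,\mathrm{zer}F)=\inf_{y'\in\mathrm{zer}F}\phi(y',x)\leq \phi(y,x)\leq \varepsilon/2<\varepsilon,
\]
which is precisely the property required to conclude that $\rho'$ is a modulus of $\phi$-regularity for $F$ w.r.t.\ $\mathrm{zer}F$ and $S$.

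There is essentially no obstacle in this argument; the only two small points of care are (i) remembering that $\mathrm{dist}_\phi$ takes the solution point in the \emph{left} slot of $\phi$, so that applying consistency to $d(y,x)$ (rather than to $d(x,y)$) requires silently using the symmetry of $d$, and (ii) the bookkeeping with the factor $1/2$ to retain the strict inequality in the conclusion. The parameters $z$ and $r$ mentioned in the statement do not enter the argument and can be ignored as leftover notation from the metric-ball formulation of regularity recalled earlier in the section.
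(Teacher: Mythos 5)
Your proof is correct and is essentially identical to the paper's own argument: apply the modulus of regularity $\rho$ to get a point $y\in\mathrm{zer}F$ with $d(y,x)<\Lambda(\varepsilon/2)$, then convert this to $\phi(y,x)\leq\varepsilon/2<\varepsilon$ via the consistency modulus $\Lambda$. Your side remarks (the role of the factor $1/2$ in recovering a strict inequality, the argument-order bookkeeping for $\mathrm{dist}_\phi$, and the observation that $z$ and $r$ are vestigial parameters in the statement) are all accurate.
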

\begin{proof}
Let $\varepsilon$ be given and let $x\in S$. Then suppose that $\vert F(x)\vert<\rho'(\varepsilon)=\rho(\Lambda(\varepsilon/2))$, then by assumption on $\rho$, we have $\mathrm{dist}(x,\mathrm{zer}F)<\Lambda(\varepsilon/2)$ and thus
\[
\exists y\in\mathrm{zer}F\left( d(y,x)<\Lambda(\varepsilon/2)\right).
\]
Thus for this $y$, we have $\phi(y,x)\leq\varepsilon/2<\varepsilon$ and thus it follows that $\mathrm{dist}_\phi(x,\mathrm{zer}F)<\varepsilon$.
\end{proof}

In the following theorem, we now just collect the resulting rates of convergence that can be constructed in the context of a consistent distance:

\begin{theorem}\label{thm:modregthmetricversion}
Let $(X,d)$ be a metric space and $(\phi_n)$ be a sequence of functions $\phi_n:X\times X\to\mathbb{R}_+$ such that $\psi\preceq^A\phi_n$ for $A:\mathbb{R}^*_+\to\mathbb{R}^*_+$ and where $\psi:X\times X\to\mathbb{R}_+$ is uniformly weakly triangular with a modulus $\theta$. Further assume that $\phi_n\nearrow\phi$ uniformly with a rate $\pi$ for a given function $\phi:X\times X\to\mathbb{R}_+$ which is uniformly consistent with moduli $\lambda,\Lambda:\mathbb{R}^*_+\to\mathbb{R}^*_+$. Let $F:X\to\overline{\mathbb{R}}$ be given with $\mathrm{zer}F\neq\emptyset$. Let $f$ be a nondecreasing function with $f(n)\leq n$ and let $\kappa$ be a rate of divergence for $f$. Assume that $(x_n)$ is partially $(\phi_n)$-$(G,H)$-quasi-Fej\'er monotone w.r.t.\ $F$, $f$ and $(\varepsilon_n)$. Let $\alpha_G$ be a G-modulus for $G$, $\beta_H$ be an $H$-modulus for $H$. Suppose there is a $\tau:\mathbb{R}^*_+\times\mathbb{N}\to\mathbb{N}$ such that
\[
\forall\delta >0\forall n\in\mathbb{N}\exists N\in [n;\tau(\delta,n)](\vert F(x_{2f(N)})\vert <\delta).
\]
Let $\rho$ be a modulus of regularity for $F$ w.r.t.\ $\mathrm{zer}F$ and $S$ such that $(x_n)\subseteq S$ and let $\xi$ be a Cauchy rate for $\sum\varepsilon_{n}<\infty$.

Then $(x_n)$ is $\psi$-Cauchy and
\[
\forall \delta>0\forall n,m\geq 2\mu'(\delta)\; (\psi(x_n,x_m)<\delta)
\]
with $\mu'$ defined as $\mu$ in Theorem \ref{thm:modregthm} but with $\rho'(\varepsilon)=\rho(\Lambda(\varepsilon/2))$ instead of $\rho$. If $\psi$ is even uniformly triangular with a modulus $\theta$, then the sequence is Cauchy with the same rate.

If $\phi_n=\phi$ and $\varepsilon_n=0$ for all $n$, then the assumption that $f(n)\to\infty$ for $n\to\infty$ can be omitted and we may assume that $\tau:\mathbb{R}^*_+\to\mathbb{N}$ is such that
\[
\forall\delta >0\exists N\leq \tau(\delta)(\vert F(x_{2f(N)})\vert <\delta),
\]
to derive that $(x_n)$ is $\psi$-Cauchy or Cauchy, depending on whether $\psi$ is uniformly weakly triangular or uniformly triangular with modulus $\theta$ as before, with a rate $2\mu'(\delta)$ where
\[
\mu'(\delta)=\tau\left(\rho'\left(\alpha_G\left(\frac{\beta_H(A(\theta(\delta)))}{2}\right)/2\right)\right)
\]
and where the other constants are as before.
\end{theorem}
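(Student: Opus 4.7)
The plan is to reduce the statement directly to Theorem \ref{thm:modregthm}, which is the analogous result already established but phrased in terms of a modulus of $\phi$-regularity rather than an ordinary (metric) modulus of regularity. Since the only hypothesis of Theorem \ref{thm:modregthm} that is not present here in a ready-to-use form is the modulus of $\phi$-regularity, the entire task is to manufacture one from the given metric modulus $\rho$ together with the uniform consistency data $\lambda,\Lambda$.

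First, I would invoke the lemma preceding the statement: from the uniform consistency of $\phi$ with moduli $\lambda,\Lambda$, the function $\rho'(\varepsilon)=\rho(\Lambda(\varepsilon/2))$ is a modulus of $\phi$-regularity for $F$ w.r.t.\ $\mathrm{zer}F$ and $S$. All other data already carry over: $\psi\preceq^A\phi_n$, the modulus of uniform weak (or full) triangularity $\theta$ of $\psi$, the rate $\pi$ for $\phi_n\nearrow\phi$, the moduli $\alpha_G,\beta_H$, the rate $\kappa$ for $f\to\infty$, the Cauchy rate $\xi$ for $\sum\varepsilon_n$, and the bound $\tau$ on the occurrence of $\delta$-approximate zeros along $(x_{2f(n)})$ are exactly the inputs required by Theorem \ref{thm:modregthm}.

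With $\rho'$ in hand, I would then apply Theorem \ref{thm:modregthm} verbatim, substituting $\rho'$ for the modulus of $\phi$-regularity there. This produces the rate
\[
\mu'(\delta)=2f\!\left(\tau\!\left(\rho'\!\left(\alpha_G\!\left(\tfrac{\beta_H(A(\theta(\delta)))}{2}\right)/2\right),\max\!\left\{\kappa\!\left(\xi\!\left(\tfrac{\beta_H(A(\theta(\delta)))}{2}\right)\right),\kappa\!\left(\pi\!\left(\alpha_G\!\left(\tfrac{\beta_H(A(\theta(\delta)))}{2}\right)/2\right)\right)\right\}\right)\right),
\]
which is precisely the rate stated in the conclusion (by the definition of $\mu$ in Theorem \ref{thm:modregthm} with $\rho'$ in place of $\rho$). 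The distinction between $\psi$ being uniformly weakly triangular and uniformly triangular is inherited directly from the two cases in Theorem \ref{thm:modregthm} and yields the $\psi$-Cauchy and Cauchy conclusions, respectively.

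For the final assertion covering the simplified case $\phi_n=\phi$ and $\varepsilon_n=0$, I would invoke the analogous simplified variant of Theorem \ref{thm:modregthm}, again with $\rho'$ in place of the $\phi$-regularity modulus; this removes the need for $\xi$, $\pi$, $\kappa$ and the assumption $f(n)\to\infty$, and yields the claimed simplified rate. No genuine obstacle arises since the entire content of the argument is packaged in the preceding lemma and Theorem \ref{thm:modregthm}; the only care needed is book-keeping to confirm that the substitution $\rho\mapsto\rho'=\rho\circ\Lambda(\cdot/2)$ matches the stated form of $\mu'$ and that the set $S$ containing $(x_n)$ is the same in both formulations, so that the modulus of regularity is applied to points where the converted modulus of $\phi$-regularity is valid.
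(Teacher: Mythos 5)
Your proposal is correct and matches the paper's (omitted) argument: the paper explicitly notes that the proof is immediate, and what makes it immediate is exactly your reduction, namely using the preceding lemma to convert the metric modulus of regularity $\rho$ into the modulus of $\phi$-regularity $\rho'(\varepsilon)=\rho(\Lambda(\varepsilon/2))$ and then invoking Theorem \ref{thm:modregthm} (in its general and, for the final clause, simplified form) with $\rho'$ in place of $\rho$. Your book-keeping on the set $S$ and on the two triangularity cases is the right sanity check and confirms the rates line up as stated.
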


Again, we omit the proof as it is rather immediate.

\section{Applications to a method with alternating inertia}

Let $X$ be a finite dimensional Hilbert space with inner product $\langle\cdot,\cdot\rangle$ and induced norm $\norm{\cdot}$ and dimension $d$. We consider the algorithm given in \cite{IH2019} to approximate fixed points of $\alpha$-averaged mappings $T$, i.e.\ $T:X\to X$ satisfies
\[
(1-\alpha)\norm{(Id-T)x-(Id-T)y}^2\leq\alpha\left(\norm{x-y}^2-\norm{T x-T y}^2\right).
\]
Concretely, given $x^0$, $\lambda>0$ and $0\leq\alpha_k\leq(1-\alpha)/\alpha$ for all $k$, we consider the iteration defined by
\[
x^{k+1}=T\bar{x}^k\text{ with }\bar{x}^k=\begin{cases}x^k&\text{if }k\text{ is even},\\
x^k+\alpha_k(x^k-x^{k-1})&\text{if }k\text{ is odd}.\end{cases}
\]
By modifying a result from \cite{MP2015} for the case of $\alpha=1/2$, the authors then obtain the following convergence result:
\begin{theorem}[\cite{IH2019}, Lemma 3.3]
Let $0\leq\alpha_k\leq(1-\alpha)/\alpha$ for all $k$. Then $(x^k)$ converges to a fixed point of $T$.
\end{theorem}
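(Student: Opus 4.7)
The plan is to verify the hypotheses of Theorem \ref{thm:mainRes} with the identifications $\phi_n=\phi=d^2$ (the squared norm distance, trivially uniformly consistent with $d$ and thus uniformly weakly triangular), $\psi=d$, $G=H=\mathrm{id}$, $\varepsilon_n\equiv 0$, $F=\mathrm{Fix}(T)$, $AF_k=\{y\in X:\|y-Ty\|\leq 1/(k+1)\}$, and $f=\mathrm{id}$. Under the implicit assumption that $\mathrm{Fix}(T)\neq\emptyset$, boundedness of $(x^k)$ (established as a by-product of the Fej\'er inequality below) combined with $\dim X=d<\infty$ allows us to restrict to a compact ball on which the hypotheses of Theorem \ref{thm:mainRes} make sense. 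Closedness of $\mathrm{Fix}(T)$ w.r.t.\ $(AF_k)$ is immediate from continuity of $T$.

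The core computation is the Fej\'er monotonicity of $(x^{2n})$ w.r.t.\ $\mathrm{Fix}(T)$. For $p\in\mathrm{Fix}(T)$, the standard expansion
\[
\|\bar{x}^{2n+1}-p\|^2=(1+\alpha_{2n+1})\|x^{2n+1}-p\|^2-\alpha_{2n+1}\|x^{2n}-p\|^2+\alpha_{2n+1}(1+\alpha_{2n+1})\|x^{2n+1}-x^{2n}\|^2
\]
combined with the averaged inequality $\|Ty-p\|^2\leq\|y-p\|^2-\tfrac{1-\alpha}{\alpha}\|y-Ty\|^2$ applied first to $y=x^{2n}$ (using $x^{2n+1}=Tx^{2n}$) and then to $y=\bar{x}^{2n+1}$ (using $x^{2n+2}=T\bar{x}^{2n+1}$) yields
\[
\|x^{2n+2}-p\|^2\leq\|x^{2n}-p\|^2-(1+\alpha_{2n+1})\Bigl(\tfrac{1-\alpha}{\alpha}-\alpha_{2n+1}\Bigr)\|x^{2n+1}-x^{2n}\|^2-\tfrac{1-\alpha}{\alpha}\|\bar{x}^{2n+1}-T\bar{x}^{2n+1}\|^2,
\]
and both correction terms are nonpositive by the assumption $0\leq\alpha_{2n+1}\leq(1-\alpha)/\alpha$. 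The $\mathrm{id}$-monotonicity condition for the full sequence is then immediate: since $x^{2(n+m)+1}=Tx^{2(n+m)}$ and $T$ is nonexpansive, $\|x^{2(n+m)+1}-p\|^2\leq\|x^{2(n+m)}-p\|^2\leq\|x^{2n}-p\|^2$ via the Fej\'er monotonicity just shown.

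It remains to verify that $(x^{2f(n)})=(x^{2n})$ has approximate $F$-points w.r.t.\ $(AF_k)$. Telescoping the Fej\'er inequality gives $\sum_{n=0}^{\infty}\|\bar{x}^{2n+1}-T\bar{x}^{2n+1}\|^2\leq\tfrac{\alpha}{1-\alpha}\|x^0-p\|^2<\infty$, so in particular $\|\bar{x}^{2n+1}-T\bar{x}^{2n+1}\|\to 0$. Since $x^{2n+2}=T\bar{x}^{2n+1}$ and $T$ is nonexpansive,
\[
\|x^{2n+2}-Tx^{2n+2}\|=\|T\bar{x}^{2n+1}-T(T\bar{x}^{2n+1})\|\leq\|\bar{x}^{2n+1}-T\bar{x}^{2n+1}\|\to 0,
\]
establishing that $\|x^{2n}-Tx^{2n}\|\to 0$ and hence the approximate $F$-point property. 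With all hypotheses of the simpler ($\phi_n=\phi$, $\varepsilon_n=0$) form of Theorem \ref{thm:mainRes} verified, we conclude that $(x^k)$ converges to some $x^*\in\mathrm{Fix}(T)$.

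The only mildly subtle point is the approximate fixed point step when $\alpha_{2n+1}$ saturates the upper bound $(1-\alpha)/\alpha$, which causes the first correction term in the Fej\'er inequality to vanish and thus prevents us from directly deducing $\|x^{2n+1}-x^{2n}\|\to 0$; however, as shown above, the residual control on $\|\bar{x}^{2n+1}-T\bar{x}^{2n+1}\|$ combined with nonexpansiveness of $T$ is already enough to recover $\|x^{2n}-Tx^{2n}\|\to 0$ for the shifted index, which is precisely what the abstract framework requires.
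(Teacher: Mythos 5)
Your proof is correct, and it is worth noting that the paper does not actually give its own proof of this theorem --- it is cited from \cite{IH2019} and the surrounding text only remarks that its proof ``proceeds by establishing partial Fej\'er monotonicity,'' after which the paper pivots to the quantitative analysis (Lemmas \ref{lem:MuPeng16Av}--\ref{lem:modulusUnifFejer}) whose non-quantitative shadow implicitly yields the result. Compared to that implicit route, your argument makes two genuinely different choices. First, you instantiate $\phi=d^2$ with $G=H=\mathrm{id}$, whereas the paper's quantitative lemmas ultimately phrase the monotonicity with $\phi=d$ (after taking square roots in Lemma \ref{lem:modulusUnifFejer}); both are equivalent for the qualitative conclusion, and your squared version is arguably the more natural direct output of the averaged-mapping computation. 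Second --- and this is the more substantive difference --- you take $f=\mathrm{id}$ and verify the $f$-monotonicity condition in a single line by combining nonexpansiveness (applied to the exact fixed point $p=Tp$) with the even-index Fej\'er inequality; the paper instead establishes a bound relating $x^{2k+3}$ to $x^{2k}$ (Lemma \ref{lem:MuPengTheorem1iAv}), uses $f(n)=n\dotdiv 1$, and then invokes the index-shift machinery of Propositions \ref{pro:affineLinearF} and \ref{pro:affineLinearFapprox}. The reason the paper cannot take your shortcut is that its moduli of \emph{uniform} Fej\'er and $f$-monotonicity must hold for $\varepsilon$-approximate fixed points $x^*\in AF_k$, where the one-step estimate $\|Tx^{2m}-x^*\|\leq\|x^{2m}-x^*\|$ incurs an error term; for the purely qualitative statement, however, one is free to test against exact $p\in\mathrm{Fix}(T)$, and your simpler choice of $f$ is then perfectly valid. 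In short, your argument buys a shorter and cleaner derivation of the qualitative convergence, at the cost of not being directly reusable for the quantitative bounds, which is precisely the paper's real goal in this section. All remaining verification steps you carry out (boundedness from the Fej\'er inequality, restriction to a compact ball in finite dimension, explicit closedness of $\mathrm{Fix}(T)$ w.r.t.\ $AF_k$, and the telescoping argument plus nonexpansiveness to get $\|x^{2n}-Tx^{2n}\|\to 0$) are sound and essentially match the corresponding paper lemmas.
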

The proof of this result proceeds by establishing partial Fej\'er monotonicity (w.r.t.\ the usual metric distance) and thus fits into the above general result which we will apply in the following to derive quantitative information.

\medskip

Note before that, that using the correspondence between averaged mappings and monotone operators (see, e.g., \cite{BMW2020} as well as \cite{BC2017}) this immediately covers the case when $T$ is given as the resolvent $J^A_\lambda=(Id+\gamma A)^{-1}$ for $\gamma>0$ for a maximal monotone or maximally $\rho$-comonotone operator $A:X\to 2^X$ as $J^A_\lambda$ is firmly nonexpansive, i.e.\ $1/2$-averaged, in the former case and $\alpha$-averaged for $\alpha=\frac{1}{2(\rho/\lambda+1)}$ in the latter case (see \cite{BMW2020}).

In that case, by $\mathrm{Fix}J^A_\lambda=A^{-1}(0)$, approximating fixed points of $T=J^A_\lambda$ means approximating zeros $\hat{x}\in A^{-1}(0)$.

In that formulation, as mentioned before, the algorithm considered above is exactly the algorithm considered in \cite{MP2015} for maximally monotone operators and the following quantitative analysis thus also applies in that case by setting $\alpha=1/2$.

\medskip
 
Now, for a quantitative approach in the spirit of the previous sections, take $\hat{x}$ with $\hat{x}=T\hat{x}$ fixed, suppose $M\geq\norm{x^k-\hat{x}},\norm{\bar{x}^k-\hat{x}}$ for all $k$. Define $X_0=\overline{B}(\hat{x},M)$ as well as
\[
F=\{x\in X_0\mid x=T x\}
\]
as well as
\[
AF_k=\left\{x\in X_0\mid \norm{x-T x}\leq\frac{1}{k+1}\right\}.
\]
\begin{lemma}
If $\sum_i a_i\leq A$, then
\[
\min_{0\leq i\leq k-1}a_i\leq\frac{A}{k}\text{ for all }k\geq 1.
\]
\end{lemma}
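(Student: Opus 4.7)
The plan is to prove this by a direct averaging (pigeonhole) argument. The hypothesis is presumably that $(a_i)$ is a non-negative sequence (this is the standard context for such summability conditions, e.g., error terms), so in particular every partial sum satisfies $\sum_{i=0}^{k-1} a_i \leq \sum_i a_i \leq A$.

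The key observation is that the minimum of a finite collection is at most the average. Concretely, I would argue by contradiction: suppose for some $k \geq 1$ that $\min_{0 \leq i \leq k-1} a_i > A/k$. Then $a_i > A/k$ for every $i \in \{0, 1, \dots, k-1\}$, and summing over these $k$ indices gives
\[
\sum_{i=0}^{k-1} a_i > k \cdot \frac{A}{k} = A,
\]
which contradicts $\sum_{i=0}^{k-1} a_i \leq \sum_i a_i \leq A$.

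There is no substantive obstacle here; the statement is a one-line averaging fact. The only thing worth being careful about is the implicit non-negativity assumption on the $a_i$, which is needed so that the finite partial sum is bounded above by the full (possibly infinite) sum $\sum_i a_i$. Under this (standard) reading, the contrapositive argument above completes the proof.
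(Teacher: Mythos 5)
Your proof is correct and takes essentially the same route as the paper: a contradiction argument from the assumption that all $a_i>A/k$, yielding a partial sum exceeding $A$. You are also right that non-negativity of $(a_i)$ is tacitly assumed so that $\sum_{i=0}^{k-1}a_i\leq\sum_i a_i$; the paper uses this step without comment.
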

\begin{proof}
Suppose not. Then there is a $k\geq 1$ such that for any $0\leq i\leq k-1$: $a_i>A/k$. Then
\[
\sum_i a_i\geq\sum_{i=0}^{k-1}a_i>A
\]
which is a contradiction.
\end{proof}
\begin{lemma}[\cite{IH2019}]
For any $k$:
\[
\norm{x^{k+2}-\hat{x}}^2\leq\norm{x^k-\hat{x}}^2-\left(\frac{1-\alpha}{\alpha}\right)\norm{T x^k+\alpha_{k+1}(T x^k-x^k)-x^{k+2}}^2.
\]
\end{lemma}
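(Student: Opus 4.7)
\medskip

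\noindent
\textbf{Proof proposal.} The natural reading (and the only case where the right-hand side makes immediate sense) is that $k$ is even, so that $\bar{x}^{k} = x^{k}$ and hence $x^{k+1} = T\bar{x}^k = Tx^k$. Under this convention, $\bar{x}^{k+1} = x^{k+1} + \alpha_{k+1}(x^{k+1}-x^k) = Tx^k + \alpha_{k+1}(Tx^k - x^k)$, and $x^{k+2} = T\bar{x}^{k+1}$. Thus the norm appearing on the right-hand side is precisely $\norm{\bar{x}^{k+1}-T\bar{x}^{k+1}}$, which is the term naturally produced by the $\alpha$-averagedness of $T$. The plan is therefore to apply $\alpha$-averagedness twice: once to reduce $x^{k+2}=T\bar{x}^{k+1}$ against $\hat{x}=T\hat{x}$, and once to reduce $x^{k+1}=Tx^k$ against $\hat{x}$, combined via the standard inertial identity for $(1+\alpha_{k+1})a - \alpha_{k+1}b$.

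First, since $T$ is $\alpha$-averaged and $\hat{x}$ is a fixed point, rewriting the defining inequality as
\[
\norm{Tx - \hat{x}}^2 \leq \norm{x-\hat{x}}^2 - \tfrac{1-\alpha}{\alpha}\norm{x - Tx}^2
\]
and applying it to $x = \bar{x}^{k+1}$ yields
\[
\norm{x^{k+2}-\hat{x}}^2 \leq \norm{\bar{x}^{k+1}-\hat{x}}^2 - \tfrac{1-\alpha}{\alpha}\norm{\bar{x}^{k+1} - x^{k+2}}^2,
\]
which already supplies the desired negative term on the right. It remains to show $\norm{\bar{x}^{k+1}-\hat{x}}^2 \leq \norm{x^k-\hat{x}}^2$, which is where the bound $\alpha_{k+1}\leq (1-\alpha)/\alpha$ enters.

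To do this, I will use the elementary identity
\[
\norm{(1+\alpha_{k+1})a - \alpha_{k+1} b}^2 = (1+\alpha_{k+1})\norm{a}^2 - \alpha_{k+1}\norm{b}^2 + \alpha_{k+1}(1+\alpha_{k+1})\norm{a-b}^2
\]
with $a = x^{k+1}-\hat{x}$ and $b = x^k - \hat{x}$, noting that $\bar{x}^{k+1}-\hat{x} = (1+\alpha_{k+1})a - \alpha_{k+1}b$. Then I will substitute the second averaging inequality $\norm{x^{k+1}-\hat{x}}^2 \leq \norm{x^k-\hat{x}}^2 - \tfrac{1-\alpha}{\alpha}\norm{x^{k+1}-x^k}^2$ for the $(1+\alpha_{k+1})\norm{a}^2$ term. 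Collecting terms gives
\[
\norm{\bar{x}^{k+1}-\hat{x}}^2 \leq \norm{x^k-\hat{x}}^2 + (1+\alpha_{k+1})\Bigl(\alpha_{k+1}-\tfrac{1-\alpha}{\alpha}\Bigr)\norm{x^{k+1}-x^k}^2,
\]
and the assumption $\alpha_{k+1}\leq(1-\alpha)/\alpha$ makes the bracketed factor $\leq 0$, so the residual is non-positive and the desired bound $\norm{\bar{x}^{k+1}-\hat{x}}^2\leq\norm{x^k-\hat{x}}^2$ follows. Combining with the first inequality yields the claim.

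The main obstacle is the second step: the cancellation hinges on lining up the two different appearances of $\norm{x^{k+1}-x^k}^2$ (one from the inertial identity, one from the $\alpha$-averagedness of $T$ applied to $x^k$), and it is precisely the choice of upper bound $(1-\alpha)/\alpha$ on $\alpha_{k+1}$ that makes the surviving coefficient nonpositive. Once that algebraic cancellation is carried out, no further estimates are needed; in particular the lemma does not require finite-dimensionality of $X$.
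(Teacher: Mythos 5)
Your proof is correct, and since the paper simply cites this lemma from \cite{IH2019} without reproducing an argument, there is no internal proof to compare against; the decomposition you give (apply $\alpha$-averagedness to $\bar{x}^{k+1}$ against the fixed point $\hat{x}$, then bound $\norm{\bar{x}^{k+1}-\hat{x}}^2$ by $\norm{x^k-\hat{x}}^2$ via the polarization identity for $(1+\alpha_{k+1})a-\alpha_{k+1}b$ and a second application of $\alpha$-averagedness to $x^{k+1}=Tx^k$, with the coefficient $\alpha_{k+1}\le(1-\alpha)/\alpha$ delivering the sign) is precisely the argument one finds in the cited source and is also consistent with how the paper itself uses Corollary 2.15 of \cite{BC2017} in the subsequent Lemma. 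Your observation that the statement should be read for even $k$ (so that $x^{k+1}=Tx^k$ and the norm on the right is exactly $\norm{\bar{x}^{k+1}-T\bar{x}^{k+1}}$) is accurate and matches how the lemma is actually invoked in the paper, where only indices of the form $k=2i$ appear.
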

\begin{lemma}\label{lem:MuPeng16Av}
\[
\left(\frac{1-\alpha}{\alpha}\right)\sum_{i}\norm{x^{2i+2}-\bar{x}^{2i+1}}^2=\left(\frac{1-\alpha}{\alpha}\right)\sum_{i}\norm{T x^{2i}-\alpha_{2i+1}(T x^{2i}-x^{2i})-x^{2i+2}}^2\leq\norm{x^0-\hat{x}}^2.
\]
Thus in particular
\[
\min_{0\leq i\leq k-1}\norm{x^{2i+2}-\bar{x}^{2i+1}}^2\leq\left(\frac{\alpha}{1-\alpha}\right)\frac{\norm{x^0-\hat{x}}^2}{k}
\]
for any $k\geq 1$.
\end{lemma}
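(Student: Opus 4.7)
The plan is to combine the previous two lemmas via a telescoping argument. The key preliminary observation is that for $k = 2i$ (even), since $\bar{x}^{2i} = x^{2i}$ we have $x^{2i+1} = T\bar{x}^{2i} = Tx^{2i}$, and hence by the definition of $\bar{x}^{2i+1}$ (odd index):
\[
\bar{x}^{2i+1} = x^{2i+1} + \alpha_{2i+1}(x^{2i+1} - x^{2i}) = Tx^{2i} + \alpha_{2i+1}(Tx^{2i} - x^{2i}).
\]
Thus the quantity $Tx^{2i} + \alpha_{2i+1}(Tx^{2i} - x^{2i}) - x^{2i+2}$ equals $\bar{x}^{2i+1} - x^{2i+2}$, and (since the norm is squared) this yields the asserted equality of the two sums.

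Next I would instantiate the preceding lemma (the one displaying the recursion $\norm{x^{k+2}-\hat{x}}^2 \leq \norm{x^k-\hat{x}}^2 - \bigl(\tfrac{1-\alpha}{\alpha}\bigr)\norm{Tx^k + \alpha_{k+1}(Tx^k - x^k) - x^{k+2}}^2$) at $k = 2i$. Using the identification above, this reads
\[
\norm{x^{2i+2}-\hat{x}}^2 \leq \norm{x^{2i}-\hat{x}}^2 - \Bigl(\tfrac{1-\alpha}{\alpha}\Bigr)\norm{x^{2i+2} - \bar{x}^{2i+1}}^2.
\]
Summing this telescopic inequality from $i=0$ to $i=N-1$ and dropping the nonnegative term $\norm{x^{2N}-\hat{x}}^2$ on the right gives
\[
\Bigl(\tfrac{1-\alpha}{\alpha}\Bigr)\sum_{i=0}^{N-1}\norm{x^{2i+2}-\bar{x}^{2i+1}}^2 \leq \norm{x^0-\hat{x}}^2.
\]
Letting $N\to\infty$ (which is justified by monotone convergence, each summand being nonnegative) yields the first displayed inequality of the lemma.

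For the second assertion, I would apply the first auxiliary lemma (stating $\min_{0\leq i\leq k-1} a_i \leq A/k$ whenever $\sum_i a_i \leq A$) with $a_i := \norm{x^{2i+2}-\bar{x}^{2i+1}}^2$ and $A := \bigl(\tfrac{\alpha}{1-\alpha}\bigr)\norm{x^0-\hat{x}}^2$, which is a valid bound on $\sum_i a_i$ by the first part after multiplying through by $\tfrac{\alpha}{1-\alpha}$. This immediately gives the claimed bound on the minimum.

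There is no real obstacle here; the proof is entirely bookkeeping, and the only subtlety is correctly identifying $Tx^{2i} + \alpha_{2i+1}(Tx^{2i}-x^{2i})$ with $\bar{x}^{2i+1}$ so that the previous lemma's perturbation term becomes exactly $\norm{x^{2i+2}-\bar{x}^{2i+1}}^2$, thereby enabling the telescoping.
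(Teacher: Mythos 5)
Your proof is correct and takes exactly the approach the paper has in mind when it omits the proof as ``immediate'': identify $\bar{x}^{2i+1}$ with $Tx^{2i}+\alpha_{2i+1}(Tx^{2i}-x^{2i})$ via the recursion (using that $\bar{x}^{2i}=x^{2i}$ so $x^{2i+1}=Tx^{2i}$), plug this into the preceding lemma at even indices, telescope, and finish the second assertion with the elementary $\min$-vs-$\sum$ lemma. One small point worth flagging: the lemma as printed writes $T x^{2i}-\alpha_{2i+1}(T x^{2i}-x^{2i})-x^{2i+2}$ with a minus sign, which is inconsistent with both the preceding lemma (which has $+\alpha_{k+1}$) and the definition of $\bar{x}^{2i+1}$; your proof silently uses the correct plus sign, and that is the reading that makes the displayed equality of the two sums true.
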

We omit the proof as it is immediate.
\begin{lemma}\label{lem:MuPengLemma2Av}
For any $k\geq 0$: $\norm{x^{2k+3}-x^{2k+2}}\leq\norm{x^{2k+2}-\bar{x}^{2k+1}}$.
\end{lemma}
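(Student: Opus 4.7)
The plan is to unfold the iteration at indices $2k+2$ and $2k+3$ and then appeal to the nonexpansiveness of $T$, which follows immediately from $\alpha$-averagedness.

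First, since $2k+2$ is even, the definition of $\bar{x}^k$ gives $\bar{x}^{2k+2} = x^{2k+2}$, and therefore the iteration yields
\[
x^{2k+3} = T\bar{x}^{2k+2} = T x^{2k+2}.
\]
On the other hand, directly from the iteration we also have $x^{2k+2} = T\bar{x}^{2k+1}$. Consequently,
\[
\norm{x^{2k+3} - x^{2k+2}} = \norm{T x^{2k+2} - T\bar{x}^{2k+1}}.
\]

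Second, from the defining inequality of $\alpha$-averagedness,
\[
(1-\alpha)\norm{(Id-T)x-(Id-T)y}^2\leq\alpha\bigl(\norm{x-y}^2-\norm{T x-T y}^2\bigr),
\]
together with $\alpha \in (0,1)$ and the nonnegativity of the left-hand side, one obtains $\norm{Tx-Ty} \leq \norm{x-y}$ for all $x,y \in X$, i.e.\ $T$ is nonexpansive. Applying this with $x = x^{2k+2}$ and $y = \bar{x}^{2k+1}$ to the displayed equation above yields the claim.

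There is no serious obstacle here; the content of the lemma is precisely nonexpansiveness of $T$ applied to two consecutive iterates, once the observation $\bar{x}^{2k+2} = x^{2k+2}$ at the even step has been made.
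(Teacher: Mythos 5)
Your proof is correct and takes essentially the same route as the paper: the paper applies the $\alpha$-averagedness inequality directly to $x = x^{2k+2}$, $y = \bar{x}^{2k+1}$ (for which $Tx = x^{2k+3}$, $Ty = x^{2k+2}$) and drops the nonnegative left-hand side, which is precisely the nonexpansiveness you extract before specializing to these two points.
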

\begin{proof}
As $T$ is $\alpha$-averaged:
\[
(1-\alpha)\norm{x^{2k+2}-x^{2k+3}-(\bar{x}^{2k+1}-x^{2k+2})}^2\leq\alpha\left(\norm{x^{2k+2}-\bar{x}^{2k+1}}^2-\norm{x^{2k+3}-x^{2k+2}}^2\right).
\]
Thus, in particular
\begin{align*}
\norm{x^{2k+3}-x^{2k+2}}^2&\leq \norm{x^{2k+2}-\bar{x}^{2k+1}}^2-\left(\frac{1-\alpha}{\alpha}\right)\norm{x^{2k+2}-x^{2k+3}-(\bar{x}^{2k+1}-x^{2k+2})}^2\\
&\leq\norm{x^{2k+2}-\bar{x}^{2k+1}}^2.
\end{align*}
\end{proof}
\begin{lemma}\label{lem:approxFPointBound}
Let $b\geq\norm{x^0-\hat{x}}$. Then $\Phi(k)=2\max\left\{1,\ceil*{\frac{\alpha}{1-\alpha}b^2(k+1)^2}\right\}$ is an approximate $F$-point bound for $(x^{2n})$.
\end{lemma}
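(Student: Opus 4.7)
The plan is to combine Lemma \ref{lem:MuPeng16Av}, Lemma \ref{lem:MuPengLemma2Av} and the simple algorithmic fact that at even indices $2j$ the update reduces to $x^{2j+1}=T x^{2j}$ (since $\bar{x}^{2j}=x^{2j}$ by the case distinction). The first lemma provides a summability-type estimate from which a pigeonhole argument extracts an index where $\|x^{2i+2}-\bar{x}^{2i+1}\|$ is small; the second transports this control into a bound on $\|x^{2i+3}-x^{2i+2}\|$; the third step rewrites $x^{2i+3}$ as $T x^{2i+2}$, placing $x^{2i+2}$ into $AF_k$.

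Concretely, I would first set $K:=\max\{1,\lceil\tfrac{\alpha}{1-\alpha}b^{2}(k+1)^{2}\rceil\}$ so that $\tfrac{\alpha}{1-\alpha}\cdot\tfrac{b^{2}}{K}\leq\tfrac{1}{(k+1)^{2}}$. Using $\|x^{0}-\hat{x}\|\leq b$, Lemma \ref{lem:MuPeng16Av} yields
\[
\min_{0\leq i\leq K-1}\|x^{2i+2}-\bar{x}^{2i+1}\|^{2}\ \leq\ \frac{\alpha}{1-\alpha}\cdot\frac{b^{2}}{K}\ \leq\ \frac{1}{(k+1)^{2}},
\]
so some $i_{0}\in\{0,\dots,K-1\}$ satisfies $\|x^{2i_{0}+2}-\bar{x}^{2i_{0}+1}\|\leq\tfrac{1}{k+1}$. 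Applying Lemma \ref{lem:MuPengLemma2Av} to this index then gives $\|x^{2i_{0}+3}-x^{2i_{0}+2}\|\leq\tfrac{1}{k+1}$. Since $2i_{0}+2$ is even, the algorithm yields $\bar{x}^{2i_{0}+2}=x^{2i_{0}+2}$ and hence $x^{2i_{0}+3}=T\bar{x}^{2i_{0}+2}=Tx^{2i_{0}+2}$; thus $\|x^{2i_{0}+2}-Tx^{2i_{0}+2}\|\leq\tfrac{1}{k+1}$.

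Setting $N:=2(i_{0}+1)$, the standing assumption $\|x^{k}-\hat{x}\|\leq M$ ensures $x^{N}\in X_{0}$, and the previous display gives $x^{N}\in AF_{k}$. Moreover $N=2(i_{0}+1)\leq 2K=\Phi(k)$, establishing the approximate $F$-point bound. I do not anticipate any genuine obstacle: all three ingredients have already been isolated, and the only point requiring attention is the indexing convention—namely, that $\Phi(k)$ is meant to bound the position in the ambient sequence $(x^{n})$ rather than the position within the subsequence $(x^{2n})$, which is precisely what accounts for the factor of $2$ in the definition of $\Phi$.
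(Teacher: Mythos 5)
Your proof is correct and follows essentially the same route as the paper: it defines the same threshold $K$ (the paper calls it $k'$), applies Lemma \ref{lem:MuPeng16Av} to locate an index $i$ with $\lVert x^{2i+2}-\bar{x}^{2i+1}\rVert\leq 1/(k+1)$, transports this to $\lVert x^{2i+3}-x^{2i+2}\rVert$ via Lemma \ref{lem:MuPengLemma2Av}, observes that $x^{2i+3}=Tx^{2i+2}$ because $\bar{x}^{2i+2}=x^{2i+2}$ at even indices, and bounds the position $2i+2\leq 2K=\Phi(k)$. (You have in fact silently corrected a small typo in the paper's own proof, which writes $\bar{x}^{2i+2}$ where Lemma \ref{lem:MuPeng16Av} actually gives $\bar{x}^{2i+1}$.)
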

\begin{proof}
Write $k'$ for $\max\left\{1,\ceil*{\frac{\alpha}{1-\alpha}b^2(k+1)^2}\right\}$. As $k'\geq 1$, by Lemma \ref{lem:MuPeng16Av}, there exists an $i\in [0;k'-1]$ such that
\[
\norm{x^{2i+2}-\bar{x}^{2i+2}}^2\leq\left(\frac{\alpha}{1-\alpha}\right)\frac{\norm{x^0-\hat{x}}^2}{k'}\leq\left(\frac{\alpha}{1-\alpha}\right)\frac{b^2}{k'}.
\]
Using Lemma \ref{lem:MuPengLemma2Av}, we get
\[
\norm{x^{2i+3}-x^{2i+2}}\leq\left(\frac{\sqrt{\alpha}}{\sqrt{1-\alpha}}\right)\frac{b}{\sqrt{k'}}\leq\frac{1}{k+1}.
\]
As $2i+2$ is even, we have $x^{2i+3}=T\bar{x}^{2i+2}=T x^{2i+2}$. Thus $x^{2i+2}\in AF_k$. Lastly, we have $2i+2\leq 2(k'-1)+2=2k'=\Phi(k)$.
\end{proof}
\begin{lemma}\label{lem:MuPeng1819Av}
Let $\norm{x^*-T x^*}\leq\varepsilon$ and $b\geq\norm{x^k-x^*},\norm{x^k-T x^*},\norm{T x^k-x^*},\norm{x^k-T x^k}$ for any $k$. Then for any $k$:
\[
\norm{x^{k+2}-x^*}^2\leq\norm{x^k-x^*}^2+\left(\frac{3-\alpha}{\alpha^2}+2\right)b\varepsilon.
\]
\end{lemma}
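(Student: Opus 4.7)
The plan is to mirror the proof of the exact-case inequality (the immediately preceding lemma), but to keep the $-\tfrac{1-\alpha}{\alpha}$ terms produced by the $\alpha$-averaged inequality rather than discarding them, so that the inertial overshoot can still be absorbed, and to track an $O(b\varepsilon)$ error each time the exact fixed-point relation $T\hat{x}=\hat{x}$ is weakened to $\norm{x^*-Tx^*}\le\varepsilon$. For even $k$ one has $\bar{x}^{k+1}=Tx^k+\alpha_{k+1}(Tx^k-x^k)$ and $x^{k+2}=T\bar{x}^{k+1}$, so I would apply the $\alpha$-averaged inequality
\[
\norm{Tu-Tv}^2\le\norm{u-v}^2-\tfrac{1-\alpha}{\alpha}\norm{(u-Tu)-(v-Tv)}^2
\]
twice: once to the pair $(\bar{x}^{k+1},x^*)$ to control $\norm{x^{k+2}-Tx^*}^2$, and once to the pair $(x^k,x^*)$ to control $\norm{Tx^k-Tx^*}^2$.

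Each output involves $\norm{\cdot-Tx^*}^2$, and I would convert it to a bound on $\norm{\cdot-x^*}^2$ via
\[
\norm{u-x^*}^2=\norm{u-Tx^*}^2+2\langle u-Tx^*,Tx^*-x^*\rangle+\norm{Tx^*-x^*}^2,
\]
using Cauchy--Schwarz, $\norm{Tx^*-x^*}\le\varepsilon$, and the hypothesized bounds $\norm{x^{k+2}-Tx^*},\norm{Tx^k-x^*}\le b$ to produce conversion errors of order $b\varepsilon$. An analogous expansion of $\norm{(x^k-Tx^k)-(x^*-Tx^*)}^2$, together with $\norm{x^k-Tx^k}\le b$, handles the corresponding conversion of the negative term coming from the second averaging.

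Next I would feed the resulting bound on $\norm{Tx^k-x^*}^2$ into the inertial identity
\[
\norm{\bar{x}^{k+1}-x^*}^2=(1+\alpha_{k+1})\norm{Tx^k-x^*}^2-\alpha_{k+1}\norm{x^k-x^*}^2+\alpha_{k+1}(1+\alpha_{k+1})\norm{Tx^k-x^k}^2,
\]
which follows by expanding $\bar{x}^{k+1}=(1+\alpha_{k+1})Tx^k-\alpha_{k+1}x^k$. Exactly as in the exact case, the inertial overshoot $\alpha_{k+1}(1+\alpha_{k+1})\norm{Tx^k-x^k}^2$ is absorbed by the retained negative term $-(1+\alpha_{k+1})\tfrac{1-\alpha}{\alpha}\norm{x^k-Tx^k}^2$ from the second averaging, since $\alpha_{k+1}\le(1-\alpha)/\alpha$ renders the combined coefficient $(1+\alpha_{k+1})\bigl(\alpha_{k+1}-\tfrac{1-\alpha}{\alpha}\bigr)$ of $\norm{Tx^k-x^k}^2$ nonpositive. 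Substituting the resulting bound on $\norm{\bar{x}^{k+1}-x^*}^2$ into the $x^*$-version of the first averaged inequality and using $(1+\alpha_{k+1})\le1/\alpha$ and $\varepsilon^2\le b\varepsilon$ should yield $\norm{x^{k+2}-x^*}^2\le\norm{x^k-x^*}^2+C\cdot b\varepsilon$.

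The main obstacle is arranging the bookkeeping so that $C$ comes out to exactly $(3-\alpha)/\alpha^2+2$ rather than to a looser constant. Each averaging application contributes an additive $2b\varepsilon+\varepsilon^2$ conversion error, one of which is scaled by $(1+\alpha_{k+1})\le1/\alpha$ upon entering the inertial identity, and the negative term's conversion contributes a further $\tfrac{2(1-\alpha)}{\alpha}b\varepsilon$ that is similarly scaled; combining these with $\varepsilon\le b$ (which follows from the hypothesized bounds) should produce the stated constant. Since losing a single factor of $\alpha$ in the wrong place would overshoot it, the computation will have to be done cleanly, but no ingredient beyond the averaging property, the inertial identity, and Cauchy--Schwarz is required.
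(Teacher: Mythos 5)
Your high-level plan is the same as the paper's: apply the $\alpha$-averaged inequality to the pair $(x^k,x^*)$ keeping the $-\tfrac{1-\alpha}{\alpha}$ term, use the identity
\[
\norm{(1+\alpha_{k+1})(Tx^k-x^*)-\alpha_{k+1}(x^k-x^*)}^2
=(1+\alpha_{k+1})\norm{Tx^k-x^*}^2-\alpha_{k+1}\norm{x^k-x^*}^2+\alpha_{k+1}(1+\alpha_{k+1})\norm{Tx^k-x^k}^2,
\]
absorb the overshoot via $\alpha_{k+1}\le(1-\alpha)/\alpha$, bound $(1+\alpha_{k+1})\le 1/\alpha$, and close with nonexpansivity. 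That matches the paper step for step. The gap lies in how you carry out the ``conversion'' from $\norm{\cdot-Tx^*}^2$ to $\norm{\cdot-x^*}^2$. You propose the symmetric polarization
\[
\norm{u-x^*}^2=\norm{u-Tx^*}^2+2\langle u-Tx^*,Tx^*-x^*\rangle+\norm{Tx^*-x^*}^2,
\]
which leaves an extra $\norm{Tx^*-x^*}^2\le\varepsilon^2$ residue in each of the two places where the bound goes in the direction $\le$. Running the bookkeeping with $2b\varepsilon+\varepsilon^2$ per conversion, one $1/\alpha$ scaling, the retained-term contribution $\tfrac{2(1-\alpha)}{\alpha^2}b\varepsilon$, and $\varepsilon^2\le b\varepsilon$ yields the constant $\tfrac{2+\alpha}{\alpha^2}+3$, and
\[
\frac{2+\alpha}{\alpha^2}+3-\left(\frac{3-\alpha}{\alpha^2}+2\right)=\frac{\alpha^2+2\alpha-1}{\alpha^2},
\]
which is strictly positive for all $\alpha>\sqrt{2}-1$ — in particular for the central firmly nonexpansive case $\alpha=1/2$. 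So your recipe, as written, does not recover the stated constant; it proves a genuinely weaker inequality on most of the admissible $\alpha$-range.

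Moreover, the step $\varepsilon^2\le b\varepsilon$ rests on $\varepsilon\le b$, which the stated hypotheses do not deliver: they bound $\norm{x^k-x^*}$, $\norm{x^k-Tx^*}$, etc., from which the triangle inequality only gives $\norm{x^*-Tx^*}\le 2b$, not $\le b$. The paper sidesteps both issues by never producing an $\varepsilon^2$ term at all. Instead of polarizing, it writes the conversion asymmetrically, e.g.\
\[
\norm{u-x^*}^2=\langle u-Tx^*,u-x^*\rangle+\langle Tx^*-x^*,u-x^*\rangle
=\norm{u-Tx^*}^2+\langle u-Tx^*,Tx^*-x^*\rangle+\langle Tx^*-x^*,u-x^*\rangle,
\]
so that each cross term is an inner product of $Tx^*-x^*$ against a vector of norm $\le b$, giving exactly $2b\varepsilon$ with nothing left over. (Equivalently, $\norm{u-x^*}^2-\norm{u-Tx^*}^2=\langle Tx^*-x^*,\,2u-x^*-Tx^*\rangle$, bounded by $2b\varepsilon$ directly.) If you replace your polarization by this one-sided split, your accounting matches the paper's and the constant $(3-\alpha)/\alpha^2+2$ comes out cleanly; keeping the symmetric polarization, you would at best obtain the lemma with a worse constant and under the unwarranted extra assumption $\varepsilon\le b$.
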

\begin{proof}
We have
\begin{align*}
\norm{x^{k+2}-x^*}&=\langle x^{k+2}-T x^*,x^{k+2}-x^*\rangle+\langle T x^*-x^*,x^{k+2}-x^*\rangle\\
&=\langle x^{k+2}-T x^*,x^{k+2}-T x^*\rangle+\langle x^{k+2}-T x^*,T x^*-x^*\rangle+\langle T x^*-x^*,x^{k+2}-x^*\rangle\\
&\leq\norm{x^{k+2}-T x^*}^2+2b\varepsilon\\
&\leq\norm{T x^k+\alpha_{k+1}(T x^k-x^k)-x^*}^2+2b\varepsilon.
\end{align*}
Now
\begin{align*}
&\norm{T x^k+\alpha_{k+1}(T x^k-x^k)-x^*}^2\\
&\qquad\qquad=\norm{(1+\alpha_{k+1})(T x^k-x^*)-\alpha_{k+1}(x^{k}-x^*)}^2\\
&\qquad\qquad=(1+\alpha_{k+1})\norm{T x^k-x^*}^2-\alpha_{k+1}\norm{x^k-x^*}^2+\alpha_{k+1}(1+\alpha_{k+1})\norm{T x^k-x^k}^2
\end{align*}
using Corollary 2.15 from \cite{BC2017}. Using
\begin{align*}
\norm{T x^k-x^*}^2&=\langle T x^k-T x^*,T x^k-x^*\rangle+\langle T x^*-x^*,T x^k-x^*\rangle\\
&\leq\langle T x^k-T x^*,T x^k-T x^*\rangle +2b\varepsilon\\
&\leq\norm{x^k-x^*}^2-\frac{1-\alpha}{\alpha}\norm{x^k-T x^k-x^*+T x^*}^2+2b\varepsilon,
\end{align*}
we get
\begin{align*}
\norm{T x^k+\alpha_{k+1}(T x^k-x^k)-x^*}^2&\leq \norm{x^k-x^*}^2-(1+\alpha_{k+1})\frac{1-\alpha}{\alpha}\norm{x^k-T x^k-x^*+T x^*}^2\\
&\qquad\qquad\qquad+\alpha_{k+1}(1+\alpha_{k+1})\norm{T x^k-x^k}^2+(1+\alpha_{k+1})2b\varepsilon.
\end{align*}
Using
\begin{align*}
\norm{x^k-T x^k-x^*+T x^*}^2&=\norm{x^k-T x^k}^2+ \langle x^k -T x^k, T x^*-x^*\rangle+\langle T x^*-x^*, x^k-T x^k-x^*+T x^*\rangle\\
&\geq\norm{x^k-T x^k}^2 -b\varepsilon+\langle T x^*-x^*, x^k-x^*\rangle+\langle T x^*-x^*, T x^*-T x^k\rangle\\
&\geq \norm{x^k-T x^k}^2 -3b\varepsilon
\end{align*}
this implies
\begin{align*}
\norm{T x^k+\alpha_{k+1}(T x^k-x^k)-x^*}^2&\leq \norm{x^k-x^*}^2-(1+\alpha_{k+1})\frac{1-\alpha}{\alpha}\left(\norm{x^k-T x^k}^2 -3b\varepsilon\right)\\
&\qquad\qquad\qquad+\alpha_{k+1}(1+\alpha_{k+1})\norm{T x^k-x^k}^2+(1+\alpha_{k+1})2b\varepsilon\\
&\leq \norm{x^k-x^*}^2+(1+\alpha_{k+1})\frac{1-\alpha}{\alpha}3b\varepsilon+(1+\alpha_{k+1})2b\varepsilon\\
&\leq \norm{x^k-x^*}^2+\left(1+\frac{1-\alpha}{\alpha}\right)\frac{1-\alpha}{\alpha}3b\varepsilon+\left(1+\frac{1-\alpha}{\alpha}\right)2b\varepsilon\\
&=\norm{x^k-x^*}^2+\left(\frac{3-\alpha}{\alpha^2}\right)b\varepsilon
\end{align*}
which yields
\[
\norm{x^{k+2}-x^*}^2\leq\norm{x^k-x^*}^2\left(\frac{3-\alpha}{\alpha^2}+2\right)b\varepsilon.
\]
\end{proof}
\begin{lemma}\label{lem:MuPengTheorem1iAv}
Let $b\geq\norm{x^k-x^*},\norm{x^k-T x^*},\norm{T x^k-x^*},\norm{x^k-T x^k}$ for all $k$ and $\norm{x^*-T x^*}\leq\varepsilon$. Then for any $k$:
\begin{enumerate}
\item $\norm{x^{2k+2}-x^*}^2\leq\norm{x^{2k}-x^*}^2+\left(\frac{3-\alpha}{\alpha^2}+2\right)b\varepsilon$,
\item $\norm{x^{2k+3}-x^*}^2\leq\norm{x^{2k}-x^*}^2+\left(\frac{3-\alpha}{\alpha^2}+4\right)b\varepsilon$.
\end{enumerate}
\end{lemma}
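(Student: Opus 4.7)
The plan is to derive both inequalities as essentially direct consequences of the work already carried out in the proof of Lemma \ref{lem:MuPeng1819Av}, combined with the simple observation that $x^{2k+3} = T x^{2k+2}$.

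For part (1), the outer index $2k$ is even, which is precisely the case for which the proof of Lemma \ref{lem:MuPeng1819Av} is tailored: that proof uses the relation $x^{k+1} = T x^k$ (valid when $k$ is even) to expand $\bar{x}^{k+1} = T x^k + \alpha_{k+1}(T x^k - x^k)$. Instantiating Lemma \ref{lem:MuPeng1819Av} with $k \mapsto 2k$ therefore yields the claim verbatim, using the boundedness assumption on $b$ in the same way.

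For part (2), the starting point is that $2k+2$ is even, hence $\bar{x}^{2k+2} = x^{2k+2}$ and therefore $x^{2k+3} = T x^{2k+2}$. It then suffices to produce a perturbed nonexpansive bound of the form
\[
\norm{T y - x^*}^2 \leq \norm{y - x^*}^2 + 2 b \varepsilon
\]
whenever $\norm{y - x^*}, \norm{T y - x^*} \leq b$ and $\norm{T x^* - x^*} \leq \varepsilon$. Such an estimate is already implicit in the first chain of inequalities of the proof of Lemma \ref{lem:MuPeng1819Av}: expanding $\norm{T y - x^*}^2$ via the inner product, adding and subtracting $T x^*$, and using Cauchy-Schwarz on the two cross terms $\langle T y - T x^*, T x^* - x^*\rangle$ and $\langle T x^* - x^*, T y - x^*\rangle$ (each of modulus at most $b \varepsilon$) gives $\norm{T y - x^*}^2 \leq \norm{T y - T x^*}^2 + 2 b \varepsilon$; nonexpansiveness of $T$, which is immediate from $\alpha$-averagedness, then yields $\norm{T y - T x^*}^2 \leq \norm{y - x^*}^2$.

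Applying this to $y = x^{2k+2}$ and then invoking part (1) gives
\[
\norm{x^{2k+3} - x^*}^2 = \norm{T x^{2k+2} - x^*}^2 \leq \norm{x^{2k+2} - x^*}^2 + 2 b \varepsilon \leq \norm{x^{2k} - x^*}^2 + \left(\frac{3 - \alpha}{\alpha^2} + 4\right) b \varepsilon,
\]
which is the desired inequality. No step is a genuine obstacle; the only point requiring care is to respect the parity constraint in Lemma \ref{lem:MuPeng1819Av}, so that it is invoked with the even index $2k$ rather than $2k+1$, and to correctly route the extra $2 b \varepsilon$ coming from the one additional application of $T$ in passing from $x^{2k+2}$ to $x^{2k+3}$.
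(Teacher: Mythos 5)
Your proposal is correct and follows essentially the same route as the paper: part (1) is Lemma \ref{lem:MuPeng1819Av} applied at the even index $2k$, and part (2) is part (1) combined with the one-step estimate $\norm{x^{2k+3}-x^*}^2\leq\norm{x^{2k+2}-x^*}^2+2b\varepsilon$, which the paper simply asserts while you spell out its derivation (cross-term bound by Cauchy--Schwarz plus nonexpansiveness of $T$, using $x^{2k+3}=Tx^{2k+2}$ since $2k+2$ is even). The only minuscule point is that bounding the cross term $\langle Ty-Tx^*,Tx^*-x^*\rangle$ by $b\varepsilon$ needs $\norm{Ty-Tx^*}\leq b$, which is not among your stated hypotheses directly but follows from the nonexpansiveness you invoke anyway, so the argument is complete.
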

\begin{proof}
The first item follows immediately from Lemma \ref{lem:MuPeng1819Av} and for the latter, note that
\[
\norm{x^{2k+3}-x^*}^2\leq\norm{x^{2k+2}-x^*}+2b\varepsilon.
\]
\end{proof}
This immediately yields a modulus of uniform Fej\'er monotonicity and the respective uniform modulus for the odd elements:
\begin{lemma}\label{lem:modulusUnifFejer}
For all $r,n,m$:
\begin{enumerate}
\item $\forall x^*\in AF_{\chi(n,m,r)}\forall l\leq m\left( \norm{x^{2(n+l)}-x^*}\leq\norm{x^{2n}-x^*}+\frac{1}{r+1}\right)$,
\item $\forall x^*\in AF_{\zeta(n,m,r)}\forall l\leq m\left(\norm{x^{2(n+l)+3}-x^*}\leq\norm{x^{2n}-x^*}+\frac{1}{r+1}\right)$,
\end{enumerate}
where
\[
\chi(n,m,r)=2m^2\ceil*{\frac{3-\alpha}{\alpha^2}+2}M(r+1)^2\dotdiv 1\text{ and }\zeta(n,m,r)=2m^2\ceil*{\frac{3-\alpha}{\alpha^2}+4}M(r+1)^2\dotdiv 1.
\]
\end{lemma}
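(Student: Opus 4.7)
Fix $r, n, m \in \mathbb{N}$ and take any $x^* \in AF_{\chi(n,m,r)}$, so that $\varepsilon := \norm{x^* - Tx^*} \leq 1/(\chi(n,m,r)+1)$. Since $x^* \in X_0 = \overline{B}(\hat{x}, M)$ and $T$ is nonexpansive (as every $\alpha$-averaged mapping is), triangle inequalities routed through the fixed point $\hat{x}$ show that $b := 2M$ serves as a uniform upper bound for each of $\norm{x^k - x^*}$, $\norm{x^k - Tx^*}$, $\norm{Tx^k - x^*}$, and $\norm{x^k - Tx^k}$ across all $k$. Hence the hypotheses of Lemma \ref{lem:MuPengTheorem1iAv} are met uniformly along the iteration.

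For part (i), iterate Lemma \ref{lem:MuPengTheorem1iAv}(i) telescopically from index $2n$ up to $2(n+l)$ for $l \leq m$ to obtain
\[
\norm{x^{2(n+l)} - x^*}^2 \leq \norm{x^{2n} - x^*}^2 + l \cdot \left(\frac{3-\alpha}{\alpha^2} + 2\right) \cdot b \cdot \varepsilon.
\]
By the definition of $\chi(n,m,r)$, and using $l \leq m$ together with $m \leq m^2$ for $m \geq 1$ (the case $m = 0$ being vacuous), the additive term on the right is bounded by $1/(r+1)^2$. Applying the elementary inequality $\sqrt{a^2 + s} \leq a + \sqrt{s}$, valid for $a, s \geq 0$, then yields
\[
\norm{x^{2(n+l)} - x^*} \leq \norm{x^{2n} - x^*} + \frac{1}{r+1},
\]
as required.

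For part (ii), proceed analogously. For $x^* \in AF_{\zeta(n,m,r)}$, chain $l$ applications of Lemma \ref{lem:MuPengTheorem1iAv}(i) with a single application of part (ii) to bound $\norm{x^{2(n+l)+3} - x^*}^2$ by $\norm{x^{2n} - x^*}^2$ plus an accumulated error of the form $(lC_1 + C_2)\,b\,\varepsilon$, with $C_1 := (3-\alpha)/\alpha^2 + 2$ and $C_2 := (3-\alpha)/\alpha^2 + 4$; the larger constant $C_2$ arising from the odd-step inequality explains why $C_2$ appears in $\zeta$. The definition of $\zeta$ is arranged so that this accumulated error is at most $1/(r+1)^2$, and the same square-root step concludes. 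Since the heavy analytical work is already encapsulated in Lemma \ref{lem:MuPengTheorem1iAv}, no genuine obstacle remains: the proof amounts to careful arithmetic bookkeeping verifying that the constants in $\chi$ and $\zeta$ absorb the factors $b$ and $l \leq m$ in the accumulated error.
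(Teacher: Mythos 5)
Your argument mirrors the paper's proof exactly: both telescope Lemma~\ref{lem:MuPengTheorem1iAv} over the even-indexed subsequence with the bound $b=2M$ (obtained as you say from $x^k,x^*\in\overline{B}(\hat{x},M)$ and nonexpansiveness of $T$), append one application of its second item to reach the odd index $2(n+l)+3$, and conclude via $\sqrt{a^2+s}\leq a+\sqrt{s}$ together with $l\leq m$. The one caveat, present equally in the paper's own proof and not a flaw in your reasoning, is that in part (ii) the verification that $(lC_1+C_2)\,b\,\varepsilon\leq 1/(r+1)^2$ amounts to $lC_1+C_2\leq m^2C_2$, which for $l=m$ only holds once $m\geq 2$ (the cases $m\in\{0,1\}$ would require, e.g., $(m+1)^2$ in place of $m^2$ in $\zeta$), so the claim that ``$\zeta$ is arranged so that'' the error is absorbed deserves the same skeptical footnote one might attach to the paper itself.
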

\begin{proof}
W.l.o.g.\  $l\geq 1$ in both cases. Let $x^*\in AF_{\chi(n,m,r)}$. Then, by Lemma \ref{lem:MuPengTheorem1iAv}, we get
\begin{align*}
\norm{x^{2(n+l)}-x^*}&\leq\norm{x^{2n}-x^*}+m\sqrt{2\ceil*{\frac{3-\alpha}{\alpha^2}+2}M\frac{1}{\chi(n,m,r)+1}}\\
&\leq\norm{x^{2n}-x^*}+\frac{1}{r+1}.
\end{align*}
Similarly, we obtain for $x^*\in AF_{\zeta(n,m,r)}$:
\begin{align*}
\norm{x^{2(n+l)+3}-x^*}&\leq\norm{x^{2n}-x^*}+m\sqrt{2\ceil*{\frac{3-\alpha}{\alpha^2}+4}M\frac{1}{\zeta(n,m,r)+1}}\\
&\leq\norm{x^{2n}-x^*}+\frac{1}{r+1}.
\end{align*}
\end{proof}
Combined, we obtain a rate of metastability for the sequences by an application of Theorem \ref{thm:metastabfversionmetricversion}.
\begin{theorem}
The sequence $(x_n)$ is Cauchy and for all $k\in\mathbb{N},g\in\mathbb{N}^\mathbb{N}$:
\[
\exists N\leq\Psi(k,g)\forall i,j\in[N;N+g(N)]\left( \norm{x_i-x_j}\leq\frac{1}{k+1}\text{ and }x_i\in AF_k\right)
\]
where $\Psi(k,g)=2\Psi_0(P,2k+1,g')+3$ with $P=\ceil*{(64k+64)\sqrt{d}M}^d$, $g'(n)=g(n+3)+3$ and $\Psi_0$ defined by
\[
\begin{cases}
\Psi_0(0,k,g)=0,\\
\Psi_0(n+1,k,g)=\Phi^M(\eta^M_{g,k}(\Psi_0(n,k,g),4k+3),
\end{cases}
\]
where
\[
\Phi(k)=2\max\left\{1,\ceil*{\frac{\alpha}{1-\alpha}M^2(k+1)^2}\right\}+1
\]
and $\Phi^M(k)=\max\{\Phi(j)\mid j\leq k\}$ as well as
\[
\eta_{g,k}(n,r)=\max\left\{2k+1,2\floor*{\frac{g(2n)}{2}}^2\ceil*{\frac{3-\alpha}{\alpha^2}+4}M(2r+4)^2\dotdiv 1\right\}.
\]
\end{theorem}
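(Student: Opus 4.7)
The plan is to reduce this to the ``no-errors'' variant of Theorem \ref{thm:metaSingleDist} (assumptions (1)', (3)', (5)') after converting the odd-step Fej\'er relation of Lemma \ref{lem:MuPengTheorem1iAv}(2) into the standard $f$-monotonicity format via Proposition \ref{pro:affineLinearF}. Since $\phi_n = \psi = d$ throughout, I would take $G = H = \mathrm{id}$ with moduli $\alpha_G = \beta_H = A = \mathrm{id}$ and $\theta(k) = 2k+1$ as a modulus of uniform triangularity for $d$. Since the ball $X_0 = \overline{B}(\hat x, M)$ in the finite-dimensional Hilbert space $X$ of dimension $d$ is totally bounded, a standard cube-covering argument on $X_0$ yields a modulus of total boundedness $\gamma$ whose instantiation at the argument dictated by Theorem \ref{thm:metaSingleDist} produces the claimed $P = \ceil*{(16k+22)\sqrt{d}M}^d$.

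Next, I would identify the remaining moduli. Lemma \ref{lem:modulusUnifFejer}(1) directly supplies the modulus $\chi$ of uniform Fej\'er monotonicity for $(x^{2n})$. Lemma \ref{lem:MuPengTheorem1iAv}(2) and its uniform refinement Lemma \ref{lem:modulusUnifFejer}(2) control $\norm{x^{2(n+l)+3} - x^*}$ by $\norm{x^{2n} - x^*}$, which is exactly the hypothesis of Proposition \ref{pro:affineLinearF} with $s = 3$, $s' = 1$, hence $f(n) = n \dotdiv 1$ (nondecreasing, $f(n) \leq n$, with $\kappa(L) = L+1$ as a rate of divergence), and the formula from Lemma \ref{lem:modulusUnifFejer}(2) is the modulus $\zeta$ of uniform $f$-monotonicity. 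Composing Lemma \ref{lem:approxFPointBound} with Proposition \ref{pro:affineLinearFapprox} produces an approximate $F$-point bound for $(x^{2f(n)})$, which accounts for the ``$+1$'' appearing in the $\Phi$ of the final statement. For the strengthened conclusion $x_i \in AF_k$, I would verify that $F$ is uniformly closed with moduli $\delta(k) = 2k+1$, $\omega(k) = 4k+3$: since $T$ is $\alpha$-averaged and hence nonexpansive, $q \in AF_{\delta(k)}$ and $d(q,p) \leq 1/(\omega(k)+1)$ yield $\norm{p - Tp} \leq 2 d(p,q) + \norm{q - Tq} \leq 1/(k+1)$, so $p \in AF_k$; these propagate through $\tilde\theta(k) = \min\{\theta(k),\omega(k)\}$ and $\tilde\eta_{k,\delta}$ and explain the ``$2k+1$'' that is passed as the second argument of $\Psi_0$ in the statement.

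Plugging these choices into the no-errors variant of Theorem \ref{thm:metaSingleDist} and simplifying under $\alpha_G = \beta_H = A = \mathrm{id}$ produces the expression for $\eta_{g,k}$ (the $\max$ of the two $\chi_g$/$\zeta_g$-type terms together with the $\delta(k) = 2k+1$ contribution from uniform closedness) and the outer recursion defining $\hat{\Psi}_0$. The overall factor $2$ and the additive shift $3$ in $\Psi(k,g) = 2 \Psi_0(P, 2k+1, g') + 3$, together with the window translation $g'(n) = g(n+3)+3$, are exactly the conversions from the inner rate (which operates on the even-index subsequence $(x^{2n})$) back to a Cauchy rate for the full sequence $(x^n)$ after absorbing the $s' = 1$ shift from Proposition \ref{pro:affineLinearF}. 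The main obstacle, and really the only nontrivial work, is matching the precise numerical constants in $P$, $\eta_{g,k}$, and the second argument $2k+1$ of $\Psi_0$ to the generic formulas from Theorem \ref{thm:metaSingleDist}; once the identification of moduli above is in place, this verification is routine but tedious arithmetic, and the convergence step itself is immediate from the quoted theorem.
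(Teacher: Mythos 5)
Your proposal is correct and follows essentially the same path as the paper's proof: identifying the moduli via Lemmas \ref{lem:modulusUnifFejer} and \ref{lem:approxFPointBound}, converting the $s=3$, $s'=1$ odd-step relation through Propositions \ref{pro:affineLinearF} and \ref{pro:affineLinearFapprox} with $f(n)=n\dotdiv 1$, and then plugging into the abstract no-errors metastability theorem before undoing the index shift by $+3$. The paper invokes Theorem \ref{thm:metastabfversionmetricversion} rather than Theorem \ref{thm:metaSingleDist}, but for $\phi=\psi=d$ the two coincide (consistency moduli are trivial), and it delegates the total-boundedness modulus and the closedness moduli $\delta(k)=2k+1$, $\omega(k)=4k+3$ to citations of \cite{KLN2018} rather than re-deriving them as you do; these are cosmetic, not substantive, differences.
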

\begin{proof}
Note that $T$ is nonexpansive and thus by Lemma 7.1 of \cite{KLN2018}, we get hat $F$ is uniformly closed with moduli $\omega_F(k)=4k+3$ and $\delta_F(k)=2k+1$. Further, by Example 2.8 of \cite{KLN2018} we get that $\gamma(k)=\ceil*{2(k+1)\sqrt{d}M}^d$ is a modulus of total boundedness of $\overline{B}(\hat{x},M)$ Then, for the sequence $(\tilde{x}_n)$ defined by 
\[
\tilde{x}_n=\begin{cases}
x_n&\text{for }n\geq 3,\\
x_n&\text{for even }n< 3,\\
x_{n-1}&\text{for odd }n<3,
\end{cases}
\]
we get from Theorem \ref{thm:metastabfversionmetricversion} by the use of Proposition \ref{pro:affineLinearF}, Proposition \ref{pro:affineLinearFapprox}, Lemma \ref{lem:modulusUnifFejer}, Lemma \ref{lem:approxFPointBound} and $f(n):=n\dotdiv 1$ that 
\[
\exists N\leq2\Psi_0(P,2k+1,g')\forall i,j\in[N;N+g(N+3)+3]\left( \norm{\tilde{x}_i-\tilde{x}_j}\leq\frac{1}{k+1}\text{ and }\tilde{x}_i\in AF_k\right)
\]
after some obvious simplifications. Therefore we get, using $\tilde{x}_i=x_i$ for $i\geq 3$, that 
\[
\exists N\leq2\Psi_0(P,2k+1,g')\forall i,j\in[N+3;N+3+g(N+3)]\left( \norm{x_i-x_j}\leq\frac{1}{k+1}\text{ and }x_i\in AF_k\right)
\]
which yields the result.
\end{proof}

Under the assumption of a modulus of regularity, we can also derive a rate of convergence using Theorem \ref{thm:modregthmetricversion}. For this, the central instantiation of the notion of the moduli of regularity is the following:

\begin{definition}
Let $T:X\to X$ be a map. For a set $S\subseteq X$, a function $\rho:\mathbb{R}^*_+\to\mathbb{R}^*_+$ is called a modulus of regularity for $T$ w.r.t.\ $\mathrm{Fix}T$ and $S$ if for all $\varepsilon>0$ and all $x\in S$:
\[
\norm{x-Tx}<\rho(\varepsilon)\rightarrow\mathrm{dist}(x,\mathrm{Fix}T)<\varepsilon.
\]
\end{definition}

With such a modulus, we get the following result.

\begin{theorem}
Let $x^*\in\mathrm{zer}F$ and $b\geq\max\{\norm{x^0-x^*},\norm{x^1-x^*}\}$. Let $\rho$ be a modulus of regularity for $T$ w.r.t.\ $\mathrm{Fix}T$ and $\overline{B}(x^*,b)$. Then $(x_n)$ is Cauchy with
\[
\forall \delta>0\forall n,m\geq\max\left\{3,2\ceil*{\frac{\alpha}{1-\alpha}b^2\left(\ceil*{\frac{1}{\rho\left(\frac{\delta}{4}\right)}}+1\right)^2}\right\}(\norm{x_n-x_m}<\delta).
\]
\end{theorem}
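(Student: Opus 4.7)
My plan is to carry out, in the present setting, a direct variant of the argument underlying Theorem~\ref{thm:modregthm}, bypassing the general $G$-/$H$-moduli machinery. The specific shape $G(t)=H(t)=t^{2}$ that appears in Lemma~\ref{lem:MuPengTheorem1iAv} (and which would otherwise force a $\sqrt{\cdot}$ overhead through $\alpha_{G}$ and $\beta_{H}$) is what allows the bound in the statement to be phrased in terms of $\rho(\delta/4)$ directly rather than something like $\rho(\delta/(4\sqrt{2}))$.

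The first step is a uniform control of the whole orbit. Since $x^{*}\in\mathrm{Fix}\,T$ is a true fixed point, Lemma~\ref{lem:MuPengTheorem1iAv} applied with $\varepsilon=0$ yields $\norm{x^{2k+2}-x^{*}}\leq\norm{x^{2k}-x^{*}}$, and the nonexpansiveness of $T$ gives $\norm{x^{2k+1}-x^{*}}=\norm{T x^{2k}-T x^{*}}\leq\norm{x^{2k}-x^{*}}$. Together with the hypothesis $b\geq\max\{\norm{x^{0}-x^{*}},\norm{x^{1}-x^{*}}\}$, this forces $x^{n}\in\overline{B}(x^{*},b)$ for every $n$, so that the approximate $F$-point bound of Lemma~\ref{lem:approxFPointBound} is applicable with this common $b$.

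Given $\delta>0$, I would then set $k:=\ceil*{1/\rho(\delta/4)}$ so that $1/(k+1)<\rho(\delta/4)$, and invoke Lemma~\ref{lem:approxFPointBound} to produce an even index $N\leq 2\ceil*{\frac{\alpha}{1-\alpha}b^{2}(k+1)^{2}}$ with $\norm{x^{N}-T x^{N}}<\rho(\delta/4)$. Because $x^{N}\in\overline{B}(x^{*},b)$, the modulus of regularity then delivers a true fixed point $y\in\mathrm{Fix}\,T$ with $\norm{y-x^{N}}<\delta/4$.

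The step that needs the most care is propagating this single inequality to a bound valid at \emph{all} indices $n\geq N$. Re-applying Lemma~\ref{lem:MuPengTheorem1iAv}(1) with $\varepsilon=0$, now using $y$ in place of $x^{*}$, handles all even $n\geq N$; for an odd $n\geq N+1$, the fact that $N$ (and hence $n-1$) is even means $\bar{x}^{n-1}=x^{n-1}$, so $x^{n}=T x^{n-1}$ and nonexpansiveness of $T$ reduces the odd case to the preceding even index. One thus obtains $\norm{x^{n}-y}\leq\norm{x^{N}-y}<\delta/4$ for every $n\geq N$, and the triangle inequality then yields $\norm{x^{n}-x^{m}}<\delta/2<\delta$ whenever $n,m\geq N$. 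The outer $\max\{3,\cdot\}$ in the stated bound simply absorbs the degenerate case in which the other quantity falls below~$3$.
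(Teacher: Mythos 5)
Your proof is correct, and it takes a genuinely different and more direct route than the paper's own proof, which simply instantiates the abstract Theorem~\ref{thm:modregthmetricversion} with the previously established moduli $\chi$, $\zeta$, $\Phi$, the identity $G$ and $H$, the metric $\phi=\psi=d$, the shift $\tilde{x}_n$ (via Propositions~\ref{pro:affineLinearF} and \ref{pro:affineLinearFapprox}) and $f(n)=n\dotdiv 1$. Your argument replaces that machinery with a hand-traced version of the same idea: apply Lemma~\ref{lem:approxFPointBound} to find an even $N$ with $\norm{x^N-Tx^N}<\rho(\delta/4)$, invoke regularity to get a true fixed point $y$ within $\delta/4$, then propagate forward. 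Two things are particularly worth noting. First, your treatment of the odd indices is slicker than the framework's: rather than invoking the $f$-monotonicity lemma (Lemma~\ref{lem:MuPengTheorem1iAv}(2), which drags in the inertia parameters), you simply observe that for odd $n$ one has $\bar{x}^{n-1}=x^{n-1}$ because $n-1$ is even, so $x^n=Tx^{n-1}$ and nonexpansiveness closes the gap. This sidesteps the whole $f$-monotone apparatus for the odd subsequence. Second, your direct bookkeeping actually makes the constant $\rho(\delta/4)$ transparent: a literal unwinding of the general rate in Theorem~\ref{thm:modregthm} (with $\alpha_G=\beta_H=A=\mathrm{id}$, $\theta(\delta)=\delta/2$) would produce $\rho(\delta/8)$ or worse, since that rate carries a spare factor of $2$ as a safety margin for the $\phi_n\nearrow\phi$ convergence that is vacuous here. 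Your argument shows the stated $\rho(\delta/4)$ is what one gets by being careful, and in fact you observe there is even a further unexhausted factor ($\delta/2<\delta$ at the end), so one could tighten to $\rho(\delta/2)$. The orbit-boundedness step is correctly handled via $\varepsilon=0$ in Lemma~\ref{lem:MuPengTheorem1iAv}(1) together with nonexpansiveness of $T$, and the $\max\{3,\cdot\}$ outer bound absorbs the minor discrepancy between your $2\max\{1,\ceil{\cdot}\}$ and the stated form.
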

\begin{proof}
For $(\tilde{x}_n)$ as before we get from Theorem \ref{thm:modregthmetricversion} (and using Proposition \ref{pro:affineLinearF}, Proposition \ref{pro:affineLinearFapprox}, Lemma \ref{lem:modulusUnifFejer}, Lemma \ref{lem:approxFPointBound} and $f(n):=n\dotdiv 1$) that
\[
\forall \delta>0\forall n,m\geq 2\ceil*{\frac{\alpha}{1-\alpha}b^2\left(\ceil*{\frac{1}{\rho\left(\frac{\delta}{4}\right)}}+1\right)^2}(\norm{\tilde{x}_n-\tilde{x}_m}<\delta).
\]
The result then follows as for $n\geq 3$, we have $\tilde{x}_n=x_n$.
\end{proof}

As already discussed in Remark \ref{rem:noComp}, also here the strong convergence in particular holds without any additional compactness assumptions in the context of these moduli of regularity.

\medskip

We refer to the extensive discussions in \cite{KLAN2019} for examples of mappings $T$ where such a modulus can be readily constructed.

\section{Applications to the Mann-type proximal point algorithm in Banach spaces}

For a second application, we work over a Banach space $X$ with norm $\norm{\cdot}$ and dual space $X^*$. Since the seminal work by Rockafellar \cite{Roc1976} and Martinet \cite{Mar1970}, the proximal point algorithm has been one of the most influential algorithmic approaches to many problems in optimization.

We are here concerned with the previously mentioned work \cite{KKT2004} for a combination of Mann's iteration with the proximal point algorithm (see again \cite{Xu2002} for this kind of iteration in Hilbert space and e.g.\ \cite{MT2004} for related iterations) for a monotone operator of the Banach space $X$ in the sense of Browder \cite{Bro1968}. Concretely, a set-valued operator $T\subseteq X\times X^*$ is called monotone if
\[
\langle x-y,x^*-y^*\rangle\geq 0
\]
for all $(x,x^*),(y,y^*)\in T$. For these monotone operators in Banach spaces, there are natural analogous for the resolvent. Over a smooth Banach space with a (single-valued) duality map $J$ and given a maximally monotone operator $T$, one particular choice is the function $J_r:X\to X$ defined by
\[
J_rx=(J+rT)^{-1}Jx
\]
for $r>0$ and $x\in X$ (see also \cite{Bar1976} for this and other variants of the resolvent in this context). Then, by combining the aspects of Mann's iteration and the proximal point point algorithm, Kamimura, Kohsaka and Takahashi obtained the following result:

\begin{theorem}[\cite{KKT2004}]
Let $X$ be a uniformly smooth and uniformly convex Banach space whose duality map is weakly sequentially continuous. Let $T$ be a maximally monotone operator and $x_n$ defined by
\[
x_{n+1}=J^{-1}\left(\alpha_n Jx_n+(1-\alpha_n)JJ_{r_n}x_n\right)
\]
for $x_0\in X$ and where $(\alpha_n)\subseteq [0,1]$, $(r_n)\subseteq \mathbb{R}^*_+$ satisfy
\[
\limsup_{n\to\infty}\alpha_n <1\text{ and }\liminf_{n\to\infty}r_n>0.
\]
If $T^{-1}0\neq\emptyset$, then the sequence $(x_n)$ converges weakly to an element of $T^{-1}0$.
\end{theorem}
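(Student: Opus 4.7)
The plan is to prove the theorem via a classical $\phi$-Fej\'er monotonicity argument, where $\phi(x,y)=\norm{x}^2-2\langle x,Jy\rangle+\norm{y}^2$ is the Bregman-type distance discussed in Section \ref{sec:ConsDist}. Fix $u\in T^{-1}0$ and set $y_n:=J_{r_n}x_n$. The definition of the resolvent gives $(Jx_n-Jy_n)/r_n\in Ty_n$, and monotonicity of $T$ then yields $\langle y_n-u,Jx_n-Jy_n\rangle\geq 0$, which rearranges to the three-point identity $\phi(u,y_n)+\phi(y_n,x_n)\leq\phi(u,x_n)$. Combining this with the convexity inequality
\[
\phi(u,J^{-1}(\alpha Jx+(1-\alpha)Jy))\leq\alpha\phi(u,x)+(1-\alpha)\phi(u,y),
\]
which follows from $\norm{J^{-1}(\cdot)}=\norm{\cdot}$ and the convexity of $\norm{\cdot}^2$, gives
\[
\phi(u,x_{n+1})\leq\phi(u,x_n)-(1-\alpha_n)\phi(y_n,x_n).
\]
Hence $(\phi(u,x_n))_n$ is nonincreasing and convergent, and $(x_n)$ is bounded since $(\norm{x}-\norm{u})^2\leq\phi(u,x)$.

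Telescoping the previous inequality yields $\sum_n(1-\alpha_n)\phi(y_n,x_n)<\infty$; combined with $\limsup\alpha_n<1$, this forces $\phi(y_n,x_n)\to 0$. The consistency moduli for $\phi$ from Lemma \ref{lem:phimoduli} then imply $\norm{y_n-x_n}\to 0$, and uniform continuity of $J$ on bounded sets (ensured by uniform smoothness) gives $\norm{Jx_n-Jy_n}\to 0$. Setting $y_n^*:=(Jx_n-Jy_n)/r_n$, the condition $\liminf r_n>0$ yields $y_n^*\to 0$ in $X^*$, while $(y_n,y_n^*)\in T$ by construction. Now let $\hat x$ be any weak cluster point of $(x_n)$, say $x_{n_k}\rightharpoonup\hat x$; since $\norm{y_n-x_n}\to 0$, also $y_{n_k}\rightharpoonup\hat x$, and demi-closedness of the maximally monotone operator $T$ (in the weak-strong topology) forces $(\hat x,0)\in T$, i.e.\ $\hat x\in T^{-1}0$.

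For uniqueness of the weak cluster point, let $\hat x_1,\hat x_2\in T^{-1}0$ be two weak cluster points. Since $\phi(\hat x_i,x_n)$ converges for each $i$, so does
\[
\phi(\hat x_1,x_n)-\phi(\hat x_2,x_n)=\norm{\hat x_1}^2-\norm{\hat x_2}^2-2\langle\hat x_1-\hat x_2,Jx_n\rangle,
\]
i.e.\ $\langle\hat x_1-\hat x_2,Jx_n\rangle$ converges. Passing to subsequences of $(x_n)$ with weak limits $\hat x_1$ and $\hat x_2$ respectively, and invoking the weakly sequentially continuous duality map, we obtain $\langle\hat x_1-\hat x_2,J\hat x_1-J\hat x_2\rangle=0$, whence $\hat x_1=\hat x_2$ by strict monotonicity of $J$ (which holds because $X$ is strictly convex, being uniformly convex).

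The main obstacle is this last uniqueness step: in Hilbert space it is simply the standard Opial lemma, but in a general Banach space one must replace the inner product by the duality map $J$, which is precisely why the hypothesis of weak sequential continuity of $J$ appears in the theorem. The Fej\'er monotonicity in the first paragraph and the asymptotic regularity in the second are, by contrast, purely algebraic consequences of the three-point identity for $\phi$ together with the consistency machinery developed in Section \ref{sec:ConsDist}, both being routine manipulations.
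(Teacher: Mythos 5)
Your argument is correct and reproduces the standard proof from \cite{KKT2004}, which the paper itself only cites rather than re-proving; indeed the key ingredients you use---the three-point inequality $\phi(u,J_ry)+\phi(J_ry,y)\leq\phi(u,y)$, the $\phi$-convexity of the Mann step, the asymptotic regularity $\phi(y_n,x_n)\to 0$, demiclosedness of the graph of $T$ in the weak-strong topology, and the Opial-type uniqueness argument via weak sequential continuity of $J$---are precisely the pieces whose quantitative content the paper extracts and analyzes in Section~8 (see the lemma from \cite{KT2004} and the two unnumbered lemmas establishing Fej\'er monotonicity and the rate for $\phi(y_n,x_n)\to 0$). No gaps: your verification of the convexity inequality via $\norm{J^{-1}(\cdot)}=\norm{\cdot}$ and convexity of $\norm{\cdot}^2$ on $X^*$ is sound, as is the reduction from strict monotonicity of $J$ to strict convexity of $X$.
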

Note that in this context of a uniformly smooth space, the mapping $J$ is uniformly continuous on bounded subsets.

\medskip

Essential for the proof of weak convergence, and thus also for the convergence analysis, is establishing Fej\'er monotonicity of this algorithm w.r.t.\ the previously discussed distance
\[
\phi(x,y)=\norm{x}^2-\langle x,Jy\rangle+\norm{y}^2
\]
for $x,y\in X$. In that way, this fits into the general quantitative result established before and we apply this result in the following to extract quantitative information on the above convergence from the proof.  Recall for this in particular the previously derived moduli $\lambda,\Lambda$ and $\theta$ for the distance $\phi$ in a uniformly convex and smooth space with a non-decreasing modulus of uniform convexity $\eta$:
\begin{gather*}
\Lambda(k,b)=\ceil*{\frac{k+1}{16L^{-1}b}}\dotdiv 1,\\
\lambda(k,b)=\ceil*{\frac{L}{b^2}\left(\eta\left(\frac{1}{4b(k+1)}\right)\right)^{-1}}\dotdiv 1,\\
\theta(k,b)=\ceil*{\frac{L}{b^2}\left(\eta\left(\frac{1}{8b(k+1)}\right)\right)^{-1}}\dotdiv 1.
\end{gather*}

For the quantitative analysis, we again suppose $M\geq\norm{x_n}$ for all $n$ and define $X_0=\overline{B}(0,M)$ as well as
\[
F=\mathrm{zer}T\cap X_0=\mathrm{Fix}J_{r}\cap X_0\; \text{(for any $r>0$)}
\]
as well as
\[
AF_k=\left\{x\in X_0\mid \norm{x-J_{r_n} x}\leq\frac{1}{k+1}\text{ for all }n\leq k\right\}.
\]

The result given in \cite{KKT2004} relies on a particular result on the relation between the distance and the resolvents of the operator from \cite{KT2004}:
\begin{lemma}[\cite{KT2004}]
Let $X$ be smooth, strictly convex and reflexive and let $T$ be maximally monotone with $\mathrm{zer}T\neq\emptyset$. Then
\[
\phi(x,J_ry)+\phi(J_ry,y)\leq\phi(x,y)
\]
for any $x\in\mathrm{zer}T$ and $y\in X$.
\end{lemma}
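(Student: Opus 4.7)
The plan is to combine an elementary algebraic identity for $\phi$ (a three-point identity for the Bregman-type distance associated with $J$) with the monotonicity of $T$ applied via the defining inclusion of the resolvent $J_r$.

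First, I would record the identity
\[
\phi(x,z)+\phi(z,y)-\phi(x,y)=2\langle x-z,\, Jy-Jz\rangle
\]
for any $x,y,z\in X$, which is verified by directly expanding $\phi(a,b)=\norm{a}^2-2\langle a,Jb\rangle+\norm{b}^2$ on both sides. This reduces the target inequality to showing that, for $z:=J_ry$,
\[
\langle x-J_ry,\, Jy-J(J_ry)\rangle\leq 0.
\]

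To see this, I would use the definition $J_ry=(J+rT)^{-1}Jy$, which (in a smooth, strictly convex, reflexive $X$, where $J$ is single-valued and $J+rT$ is maximally monotone and surjective, so $J_r$ is well-defined as a single-valued map on all of $X$) is equivalent to
\[
\tfrac{1}{r}\bigl(Jy-J(J_ry)\bigr)\in T(J_ry).
\]
Since $x\in\mathrm{zer}\,T$, the pair $(x,0)$ belongs to $T$. Monotonicity of $T$ between this pair and $\bigl(J_ry,\tfrac{1}{r}(Jy-J(J_ry))\bigr)$ yields
\[
\bigl\langle x-J_ry,\; 0-\tfrac{1}{r}(Jy-J(J_ry))\bigr\rangle\geq 0,
\]
and multiplication by $-r<0$ gives precisely the inequality above.

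Substituting $z=J_ry$ in the three-point identity and inserting this sign information then gives
\[
\phi(x,J_ry)+\phi(J_ry,y)=\phi(x,y)+2\langle x-J_ry,\, Jy-J(J_ry)\rangle\leq\phi(x,y),
\]
as required. The only non-routine point is the well-definedness of $J_r$ as a single-valued everywhere defined map, but this is the standard combination of smoothness (single-valuedness of $J$), strict convexity plus maximal monotonicity of $T$ (uniqueness in $(J+rT)^{-1}$), and reflexivity (range of $J+rT$ is all of $X^*$), so I would simply invoke it.
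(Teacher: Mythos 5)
Your proof is correct and follows the standard argument (due to Kohsaka and Takahashi, which the paper cites without reproducing): the three-point identity $\phi(x,z)+\phi(z,y)-\phi(x,y)=2\langle x-z,Jy-Jz\rangle$ combined with the inclusion $r^{-1}(Jy-JJ_ry)\in T(J_ry)$ and monotonicity against $(x,0)\in T$. This is exactly the decomposition the paper itself uses in the proof of its quantitative generalization (Lemma \ref{lem:Lem25quant}), where the identity appears as $\phi(x,y)=\phi(x,J_ry)+\phi(J_ry,y)+2r\langle x-J_ry,-A_ry\rangle$ with $A_ry=r^{-1}(Jy-JJ_ry)$, so your approach matches the paper's.
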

Based on the approximations $AF_k$, we get a quantitative version of this result. For this, we still need the following preliminary bound on the norm of the resolvent:
\begin{lemma}\label{lem:resMaj}
Let $T$ be maximally monotone and let $d\in Tc$ with $C\geq\norm{c}$, $D\geq\norm{d}$. Then for any $r>0$ and any $x\in X$:
\[
\norm{J_rx}\leq \max\{(1+C)(b+RD)+C,1\}=:\mu(R,b)
\]
where $R\geq r$ and $b\geq\norm{x}$.
\end{lemma}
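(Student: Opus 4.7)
The plan is to unfold the defining identity for the resolvent and then apply the monotonicity inequality against the fixed pair $(c,d)\in T$. Concretely, $y = J_r x$ means $Jx \in Jy + r\,Ty$, so there is $w\in Ty$ with $Jx = Jy + rw$, i.e.\ $w = (Jx-Jy)/r$. Monotonicity of $T$ applied to $(y,w)$ and $(c,d)$ then gives
\[
\langle y-c,\, Jx - Jy\rangle \;\geq\; r\,\langle y-c,\, d\rangle.
\]

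Next, I would rearrange this using $\langle y,Jy\rangle = \norm{y}^2$ to move the quadratic term to the left-hand side, obtaining
\[
\norm{y}^2 \;\leq\; \langle y,Jx\rangle - \langle c,Jx\rangle + \langle c,Jy\rangle - r\langle y-c,d\rangle.
\]
Applying Cauchy--Schwarz (using $\norm{Jz} = \norm{z}$) and the triangle inequality $\norm{y-c}\leq \norm{y}+\norm{c}$, together with the hypotheses $\norm{c}\leq C$, $\norm{d}\leq D$, $\norm{x}\leq b$ and $r\leq R$, this collapses to a bound of the form
\[
\norm{y}^2 \;\leq\; A\,\norm{y} + B, \qquad A := b + C + RD,\quad B := C(b+RD).
\]

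The last step is a case split, which is where the $\max$ in the stated bound appears. If $\norm{y}\leq 1$, then the bound $\norm{y}\leq \mu(R,b)$ holds trivially since $\mu(R,b)\geq 1$ by definition. Otherwise $\norm{y}\geq 1$, so $\norm{y}\leq \norm{y}^2$, and dividing the quadratic inequality by $\norm{y}$ yields $\norm{y} \leq A + B/\norm{y} \leq A + B = (1+C)(b+RD) + C$. Combining both cases gives $\norm{y}\leq \mu(R,b)$.

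I do not expect any real obstacle here: the argument is a standard ``bound on resolvent iterates via monotonicity against a known point of the graph'' computation, adapted to the Banach-space duality-map resolvent $J_r = (J+rT)^{-1}J$. The only mildly delicate point is organising the case $\norm{y}<1$ so that the quadratic estimate $\norm{y}^2\leq A\norm{y}+B$ can be read as a linear bound on $\norm{y}$; the $\max\{\,\cdot\,,1\}$ in the definition of $\mu$ is precisely what absorbs this case.
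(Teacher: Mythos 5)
Your proof is correct and follows essentially the same route as the paper's: unfold the resolvent identity to get $(J_rx,\,r^{-1}(Jx-JJ_rx))\in T$, pair monotonicity against $(c,d)$, isolate $\norm{J_rx}^2=\langle J_rx,JJ_rx\rangle$ and bound by Cauchy--Schwarz to obtain a quadratic inequality $\norm{J_rx}^2\leq A\norm{J_rx}+B$ with $A+B=(1+C)(b+RD)+C$, then absorb the small-norm case via the threshold $1$ in $\mu$. The paper phrases the last step as a proof by contradiction instead of a case split, but the content is identical.
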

\begin{proof}
By definition of the resolvent, we have for any $r>0$ and any $x\in X$:
\[
\left(J_r x, r^{-1}(Jx-JJ_r x)\right)\in T.
\]
As $d\in Tc$, we have by monotonicity of $T$ that
\[
\langle J_r x-c,r^{-1}(Jx-JJ_r x)-d\rangle\geq 0.
\]
By linearity, we obtain
\begin{align*}
0&\leq \langle J_r x-c,r^{-1}(Jx-JJ_r x)-d\rangle\\
&=\langle J_r x,r^{-1}Jx-d\rangle -\langle J_r x,r^{-1}JJ_r x\rangle -\langle c,r^{-1}Jx-d\rangle+\langle c,r^{-1}JJ_r x\rangle
\end{align*}
and therefore we get
\begin{align*}
\norm{J_r x}^2=\langle J_r x,JJ_r x\rangle&\leq \langle J_r x,Jx-r d\rangle -\langle c,Jx-r d\rangle+\langle c,JJ_r x\rangle\\
&\leq \norm{J_r x}(\norm{x}+r D)+C(\norm{x}+r D)+C\norm{J_r x}.
\end{align*}
Suppose now that $\norm{J_r x}>\max\{\norm{x}+r D+C(\norm{x}+r D)+C,1\}$. Then by multiplying by $\norm{J_r x}^{-1}$, we get 
\begin{align*}
\norm{J_r x}&\leq \norm{x}+r D+\norm{J_r x}^{-1}C(\norm{x}+r D)+C\\
&\leq \norm{x}+r D+C(\norm{x}+r D)+C
\end{align*}
from the above which is a contradiction. This gives the claim.
\end{proof}

\begin{lemma}\label{lem:Lem25quant}
Let $X$ be uniformly smooth and uniformly convex and let $T$ be maximally monotone. Let $d\in Tc$ with $C\geq\norm{c}$, $D\geq\norm{d}$. Let $\omega(\varepsilon,b)\leq\varepsilon$ be a modulus of uniform continuity of $J$ on bounded subsets, i.e.
\[
\forall z,w\in X,b\in\mathbb{N},\varepsilon>0\left(\norm{z},\norm{w}\leq b\land 
\norm{z-w}\leq\omega(\varepsilon,b)\to \norm{Jz-Jw}\leq\varepsilon\right).
\]
Then for any $\varepsilon>0$, $x,y\in X$ and $r,s>0$, if $x$ and $s$ are such that
\[
\norm{x-J_sx}\leq\omega\left(\frac{\varepsilon}{2E},\max\{b,\mu(s,b)\}\right)
\] 
for $b\geq\norm{x},\norm{y}$, we have
\[
\phi(x,J_ry)+\phi(J_ry,y)\leq\phi(x,y)+\varepsilon
\]
where
\[
E\geq \max\{2(\mu(r,b)+b),2rs^{-1}(\mu(r,b)+\mu(s,b))\}
\]
with $\mu$ as in Lemma \ref{lem:resMaj}.
\end{lemma}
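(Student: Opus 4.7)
My plan is to mirror the proof of the unperturbed version (essentially Lemma 2.5 of \cite{KT2004}) but, since $x$ is now only approximately a zero of $T$ (as quantified by the bound on $\norm{x-J_sx}$), to track the resulting additive errors and absorb them using the bound $E$. The point $J_sx$ will play the role that an exact zero of $T$ plays in the original argument.

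First I will derive the algebraic identity
\[
\phi(x,J_ry)+\phi(J_ry,y)-\phi(x,y)=-2\langle J_ry-x,\,Jy-JJ_ry\rangle
\]
by direct expansion from the definition of $\phi$. Then I will combine the two resolvent inclusions
\[
\left(J_ry,\,r^{-1}(Jy-JJ_ry)\right)\in T\quad\text{and}\quad \left(J_sx,\,s^{-1}(Jx-JJ_sx)\right)\in T
\]
with monotonicity of $T$ to obtain
\[
\langle J_ry-J_sx,\,Jy-JJ_ry\rangle\geq rs^{-1}\langle J_ry-J_sx,\,Jx-JJ_sx\rangle.
\]

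Next I will bound $\langle J_ry-x,Jy-JJ_ry\rangle$ from below by splitting it as $\langle J_ry-J_sx,Jy-JJ_ry\rangle+\langle J_sx-x,Jy-JJ_ry\rangle$. For the first summand, the monotonicity inequality together with Cauchy--Schwarz yields a lower bound of $-rs^{-1}\norm{J_ry-J_sx}\cdot\norm{Jx-JJ_sx}$. Here $\norm{J_ry-J_sx}\leq\mu(r,b)+\mu(s,b)$ by Lemma \ref{lem:resMaj}, and $\norm{Jx-JJ_sx}\leq\varepsilon/(2E)$ by applying uniform continuity of $J$ (with modulus $\omega$) to the hypothesis $\norm{x-J_sx}\leq\omega(\varepsilon/(2E),\max\{b,\mu(s,b)\})$, using that $\norm{x}\leq b$ and $\norm{J_sx}\leq\mu(s,b)$. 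The assumption $E\geq 2rs^{-1}(\mu(r,b)+\mu(s,b))$ then compresses this to $-\varepsilon/4$. For the second summand, I will apply Cauchy--Schwarz directly, bounding $\norm{J_sx-x}\leq\omega(\varepsilon/(2E),\cdot)\leq\varepsilon/(2E)$ (since $\omega(\varepsilon,b)\leq\varepsilon$ by hypothesis) and $\norm{Jy-JJ_ry}\leq\norm{y}+\norm{J_ry}\leq b+\mu(r,b)$; the assumption $E\geq 2(\mu(r,b)+b)$ then bounds this piece in absolute value by $\varepsilon/4$.

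Putting both estimates together gives $\langle J_ry-x,Jy-JJ_ry\rangle\geq-\varepsilon/2$, and substituting into the identity yields the desired inequality. The main obstacle is the bookkeeping that justifies exactly the two quantities in the maximum defining $E$: one arises from the $rs^{-1}$-scaled monotonicity term (and requires the norm bound $\mu(s,b)$ on $\norm{J_sx}$), while the other arises from the direct Cauchy--Schwarz bound involving $\norm{Jy-JJ_ry}$ (and only needs $\mu(r,b)$ on $\norm{J_ry}$); both norm estimates rely on Lemma \ref{lem:resMaj} and use the hypothesis $\omega(\varepsilon,b)\leq\varepsilon$ to convert the continuity modulus back into a usable linear bound.
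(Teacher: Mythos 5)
Your proposal is correct and takes essentially the same route as the paper's proof: the same algebraic identity $\phi(x,J_ry)+\phi(J_ry,y)-\phi(x,y)=-2\langle J_ry-x,\,Jy-JJ_ry\rangle$, the same decomposition through the auxiliary point $J_sx$ (the paper writes it as $\langle x-J_ry,-A_ry\rangle=\langle x-J_sx,-A_ry\rangle+\langle J_sx-J_ry,-A_ry\rangle$, which is your split up to sign), the same use of monotonicity of $T$ applied to the two resolvent inclusions, and the same norm bounds via Lemma \ref{lem:resMaj} together with $\omega(\varepsilon,b)\leq\varepsilon$ and uniform continuity of $J$; your bookkeeping just organizes the error into two $\varepsilon/4$ pieces where the paper collects everything into $E(\norm{x-J_sx}+\norm{JJ_sx-Jx})\leq\varepsilon$.
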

\begin{proof}
It is known that
\begin{align*}
\phi(x,y)&=\phi(x,J_ry)+\phi(J_ry,y)+2\langle x-J_ry,JJ_ry-Jy\rangle\\
&=\phi(x,J_ry)+\phi(J_ry,y)+2r\langle x-J_ry,-A_ry\rangle.
\end{align*}
Now, we also immediately have (using monotonicity of $T$):
\begin{align*}
\langle x-J_ry,-A_ry\rangle&=\langle x-J_sx,-A_ry\rangle+\langle J_sx-J_ry,-A_ry\rangle\\
&=\langle x-J_sx,-A_ry\rangle+\langle J_sx-J_ry,s^{-1}(Jx-JJ_sx)-A_ry\rangle+\langle J_sx-J_ry,-s^{-1}(Jx-JJ_sx)\rangle\\
&\geq\langle x-J_sx,-A_ry\rangle+s^{-1}\langle J_sx-J_ry,JJ_sx-Jx\rangle\\
&\geq-\norm{x-J_sx}\norm{A_ry}-s^{-1}\norm{J_sx-J_ry}\norm{JJ_sx-Jx}\\
&\geq-\norm{x-J_sx}r^{-1}(\norm{J_ry}+\norm{y})-s^{-1}(\norm{J_sx}+\norm{J_ry})\norm{JJ_sx-Jx}.
\end{align*}
Thus, using the previous Lemma \ref{lem:resMaj}, we have 
\begin{align*}
\phi(x,y)&\geq\phi(x,J_ry)+\phi(J_ry,y)-2r\norm{x-J_sx}r^{-1}(\norm{J_ry}+\norm{y})-s^{-1}2r(\norm{J_sx}+\norm{J_ry})\norm{JJ_sx-Jx}\\
&\geq \phi(x,J_ry)+\phi(J_ry,y)-\norm{x-J_sx}2(\mu(r,\norm{y})+\norm{y})\\
&\qquad\qquad\qquad-2rs^{-1}(\mu(r,\norm{y})+\mu(s,\norm{x}))\norm{JJ_sx-Jx}\\
&\geq \phi(x,J_ry)+\phi(J_ry,y)-E\left(\norm{x-J_sx}+\norm{JJ_sx-Jx}\right)
\end{align*}
and therefore, for $x$ such that
\[
\norm{x-J_sx}\leq\omega\left(\frac{\varepsilon}{2E},\max\{b,\mu(s,b)\}\right),
\]
we get that
\[
\phi(x,y)\geq\phi(x,J_ry)+\phi(J_ry,y)-\varepsilon
\]
which is the claim.
\end{proof}

\begin{remark}\label{rem:Lem25quant}
If $\omega:\mathbb{N}\times\mathbb{N}\to\mathbb{N}$ is a modulus of uniform continuity of $J$ in the sense that
\[
\norm{z-w}\leq\frac{1}{\omega(k,b)+1}\to \norm{Jz-Jw}\leq\frac{1}{k+1}
\]
for all $k\in\mathbb{N}$ and all $z,w\in \overline{B}(0,b)$, then it holds that 
\[
\phi(x,J_ry)+\phi(J_ry,y)\leq\phi(x,y)+\frac{1}{k+1}
\]
for any $k,b\in\mathbb{N}$, $x,y\in X$ and $r,s>0$ whenever
\[
\norm{x-J_sx}\leq\frac{1}{\omega(2E(k+1)+1,B+1)+1}
\] 
for $B\geq\norm{x}$.
\end{remark}

Using this, we can give a quantitative version (in the sense of a modulus of uniform $\phi$-Fej\'er monotonicity as introduced before) of the ``Fej\'er monotonicity''-type result implicitly shown in the course of the proof of Theorem 3.1 in \cite{KKT2004}.

\begin{lemma}
Let $X$ be uniformly smooth and uniformly convex and let $T$ be maximally monotone. Let $d\in Tc$ with $C\geq\norm{c}$, $D\geq\norm{d}$ and let $\omega:\mathbb{N}\times\mathbb{N}\to\mathbb{N}$ be a modulus of uniform continuity of $J$. Then for all $r,n,m\in\mathbb{N}$:
\[
\forall z\in AF_{\chi(n,m,r)}\forall l\leq m\left( \phi(z,x_{n+l})\leq\phi(z,x_n)+\frac{1}{r+1}\right),
\]
where
\[
\chi(n,m,r)=\max\{n,\omega(2E^0_{n,m}((r+1)(m\dotdiv 1)\dotdiv 1+1)+1,M+1)\}
\]
as well as 
\[
E^0_{n,m}\geq \max\{2(\mu(\hat{r}_{n,m},M)+M),2\hat{r}_{n,m}r_n^{-1}(\mu(\hat{r}_{n,m},M)+\mu(r_n,M))\},
\]
where $\hat{r}_{n,m}=\max\{r_i\mid i\leq n+m\dotdiv  1\}$.
\end{lemma}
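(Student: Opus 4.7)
My plan is to establish a per-step inequality by combining convexity of $\phi$ with the quantitative form of Lemma 9.7 given in Remark 9.8, then iterate. For the convexity ingredient, I use the standard inequality
\[
\phi(z, J^{-1}(\alpha Ju + (1-\alpha)Jv)) \leq \alpha\phi(z,u) + (1-\alpha)\phi(z,v)
\]
valid for $\alpha \in [0,1]$ and $z,u,v \in X$ in a smooth reflexive space, which follows from expanding $\phi$, the identity $\norm{J^{-1}\xi} = \norm{\xi}_{X^*}$, and the convexity of the squared dual norm. Applying this to the iteration $x_{n+l+1} = J^{-1}(\alpha_{n+l}Jx_{n+l} + (1-\alpha_{n+l})JJ_{r_{n+l}}x_{n+l})$ yields
\[
\phi(z, x_{n+l+1}) \leq \alpha_{n+l}\phi(z, x_{n+l}) + (1-\alpha_{n+l})\phi(z, J_{r_{n+l}}x_{n+l}).
\]

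For the second ingredient, since $z \in AF_{\chi(n,m,r)}$ and $\chi(n,m,r) \geq n$, the defining property of $AF$ gives $\norm{z - J_{r_n}z} \leq 1/(\chi(n,m,r)+1)$, so I can invoke Remark 9.8 with the fixed choice $s = r_n$, current iterate $y = x_{n+l}$, and current resolvent parameter $r = r_{n+l}$. By design of $E^0_{n,m}$, which uniformly bounds the constant $E$ from Lemma 9.7 as $r$ ranges over the resolvent parameters $r_{n+l}$ for $l \leq m-1$ (all bounded by $\hat{r}_{n,m}$) with $s = r_n$ held fixed, and by the choice of the argument inside $\omega$ in the formula for $\chi$, the hypothesis of Remark 9.8 is met with a sufficiently small error parameter $\eta$, giving
\[
\phi(z, J_{r_{n+l}}x_{n+l}) \leq \phi(z, x_{n+l}) + \eta
\]
(where the non-negative term $\phi(J_{r_{n+l}}x_{n+l}, x_{n+l})$ from Lemma 9.7 is dropped on the right-hand side), with $\eta$ arranged so that $m\eta \leq 1/(r+1)$.

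Combining the two inequalities and using $(1-\alpha_{n+l}) \leq 1$ gives the per-step bound $\phi(z, x_{n+l+1}) \leq \phi(z, x_{n+l}) + \eta$, and telescoping over $l = 0, 1, \ldots, m-1$ produces
\[
\phi(z, x_{n+l}) \leq \phi(z, x_n) + l\eta \leq \phi(z, x_n) + m\eta \leq \phi(z, x_n) + \frac{1}{r+1}
\]
for all $l \leq m$, with the case $l = 0$ being trivial.

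The main obstacle is the careful tracking of constants inside $\chi$. The key observation that makes the lemma work with the stated $\max\{n, \omega(\cdot, M+1)\}$, rather than something like $\max\{n+m-1, \omega(\cdot, M+1)\}$, is that by fixing $s = r_n$ in Remark 9.8 a single approximate-fixed-point condition $\norm{z - J_{r_n}z}$ small is enough to control all $m$ steps; the variability in the resolvent parameters $r_{n+l}$ is absorbed into $E^0_{n,m}$ through the uniform bound involving $\mu(\hat{r}_{n,m}, M)$ and $\mu(r_n, M)$. Verifying that the explicit argument $2E^0_{n,m}((r+1)(m\dotdiv 1)\dotdiv 1+1)+1$ supplied to $\omega$ corresponds precisely to the form demanded by Remark 9.8 so that the per-step error, when summed over at most $m$ iterations, remains bounded by $1/(r+1)$, is the detailed bookkeeping that renders the lemma quantitatively precise.
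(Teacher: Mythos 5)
Your proposal matches the paper's own argument: the paper likewise first invokes the convexity-type inequality $\phi(z,x_{n+l+1})\leq\alpha_{n+l}\phi(z,x_{n+l})+(1-\alpha_{n+l})\phi(z,J_{r_{n+l}}x_{n+l})$ (citing the proof of Theorem 3.1 of \cite{KKT2004}), then uses the membership $z\in AF_{\chi(n,m,r)}$ with $\chi(n,m,r)\geq n$ to apply Lemma \ref{lem:Lem25quant}/Remark \ref{rem:Lem25quant} with fixed $s=r_n$ and $r=r_{n+l}$ ranging below $\hat r_{n,m}$ (which is precisely why $E^0_{n,m}$ is built from $\mu(\hat r_{n,m},M)$ and $\mu(r_n,M)$), and finally telescopes the per-step error over at most $m$ steps. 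Your note that a single approximate-fixed-point hypothesis at $s=r_n$ controls all steps is exactly the mechanism the paper relies on, so the two proofs are the same up to the detailed arithmetic of the constant inside $\omega$.
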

\begin{proof}
We get 
\[
\phi(z,x_{n+1})\leq\alpha_n\phi(z,x_n)+(1-\alpha_n)\phi(z,J_{r_n}x_n).
\]
as in the proof of Theorem 3.1 in \cite{KKT2004}. Now, since $z\in AF_{\chi(n,m,r)}$, we have
\[
\norm{z-J_{r_n}z}\leq\frac{1}{\omega(2E^0_{n,m}((r+1)(m\dotdiv 1)\dotdiv 1+1)+1,M+1)+1}
\]
and by Lemma \ref{lem:Lem25quant} (see also Remark \ref{rem:Lem25quant}), this implies
\[
\phi(z,J_{r_n}x_n)\leq\phi(z,x_n)+\frac{1}{(r+1)(m\dotdiv 1)\dotdiv 1+1}
\]
and by induction, combined with the above, we get
\begin{align*}
\phi(z,x_{n+l})&\leq\phi(z,x_{n})+\frac{1}{(r+1)(m\dotdiv 1)}\sum_{i=n}^{n+m\dotdiv 1}(1-\alpha_n)\\
&\leq \phi(z,x_{n})+\frac{m\dotdiv 1}{(r+1)(m\dotdiv 1)}\\
&\leq \phi(z,x_{n})+\frac{1}{r+1}
\end{align*}
for any $l\leq m$ since $\norm{z},\norm{x_i}\leq M$ for all $i$ as well as
\[
E^0_{n,m}\geq \max\{2(\mu(\hat{r}_{n,m},M)+M),2\hat{r}_{n,m}r_n^{-1}(\mu(\hat{r}_{n,m},M)+\mu(r_n,M))\}
\]
where $\hat{r}_{n,m}=\max\{r_i\mid i\leq n+m\dotdiv  1\}$.
\end{proof}

Next, we will obtain an approximate $F$-point bound from the asymptotic regularity result
\[
\phi(y_n,x_n)\to 0\text{ for }n\to\infty
\]
established in the proof of Theorem 3.1 from \cite{KKT2004}. For this, we first collect a few preliminary results.

\begin{lemma}
If $(a_n)\subseteq [0,b]$ and $a_{n+1}\leq a_n$ for all $n$, then
\[
\forall \varepsilon>0\exists n\leq 2\ceil*{\frac{b}{\varepsilon}}\left( a_{2n}-a_{2n+1}\leq\varepsilon\right).
\]
\end{lemma}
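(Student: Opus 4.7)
The plan is a straightforward pigeonhole argument on the telescoping sum of the monotone sequence. The key observation is that by the monotonicity assumption $a_{n+1}\leq a_n$, for any $N\in\mathbb{N}$ one has
\[
\sum_{n=0}^{N}(a_{2n}-a_{2n+1})\leq\sum_{n=0}^{2N+1}(a_n-a_{n+1})=a_0-a_{2N+2}\leq b,
\]
where the first inequality uses that each omitted term $a_{2n+1}-a_{2n+2}$ is non-negative and so can only increase the sum, and the last inequality uses $(a_n)\subseteq [0,b]$ together with $a_{2N+2}\geq 0$.

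Next, I would assume for contradiction that $a_{2n}-a_{2n+1}>\varepsilon$ holds for \emph{every} $n\leq 2\lceil b/\varepsilon\rceil$. Setting $N:=2\lceil b/\varepsilon\rceil$ in the displayed inequality above then yields
\[
(2\lceil b/\varepsilon\rceil+1)\varepsilon<\sum_{n=0}^{N}(a_{2n}-a_{2n+1})\leq b,
\]
hence $2\lceil b/\varepsilon\rceil+1<b/\varepsilon\leq\lceil b/\varepsilon\rceil$, which is impossible since $\lceil b/\varepsilon\rceil\geq 0$. This produces the required $n\leq 2\lceil b/\varepsilon\rceil$ witnessing $a_{2n}-a_{2n+1}\leq\varepsilon$.

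There is no genuine obstacle; the whole content is the elementary telescoping-plus-pigeonhole idea. In fact, the bound $2\lceil b/\varepsilon\rceil$ in the statement is slack (the very same argument gives the tighter bound $\lceil b/\varepsilon\rceil$), so I would expect the factor of $2$ to have been chosen by the author only for convenience of the intended application to the asymptotic regularity estimate $\phi(y_n,x_n)\to 0$ that follows, rather than for any sharpness reason.
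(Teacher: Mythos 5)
Your proof is correct and is essentially the same telescoping-plus-pigeonhole argument as the paper's (the paper phrases it as a direct chain $a_0>\varepsilon+a_2>2\varepsilon+a_4>\dots$ rather than bounding a partial sum, but the two are the same computation). Your observation that the bound $2\lceil b/\varepsilon\rceil$ is slack is also consistent with the paper's own proof, which already reaches a contradiction after roughly $\lceil b/\varepsilon\rceil$ steps.
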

\begin{proof}
Suppose not. Then
\[
a_0>\varepsilon+a_2>\dots>2\ceil*{\frac{b}{\varepsilon}}\varepsilon+a_{2\ceil*{\frac{b}{\varepsilon}}}\geq b
\]
which is a contradiction.
\end{proof}

\begin{lemma}[\cite{KKT2004}]
Let $z\in\mathrm{zer}T$. Then for any $n$: $\phi(z,x_{n+1})\leq\phi(z,x_n)$ as well as
\[
\phi(y_n,x_n)\leq\frac{\phi(z,x_n)-\phi(z,x_{n+1})}{1-\alpha_n}.
\]
\end{lemma}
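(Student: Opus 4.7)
The plan is to combine two standard ingredients: the non-quantitative form of the preceding resolvent estimate (i.e.\ the Kamimura--Takahashi inequality from \cite{KT2004} recalled just before Lemma \ref{lem:Lem25quant}) and the convexity of the map $u\mapsto\phi(z,u)$ with respect to convex combinations taken in the dual space through $J$. Setting $y_n:=J_{r_n}x_n$ as dictated by the iteration, both assertions will then drop out of a single chained estimate.

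The first ingredient is the inequality
\[
\phi(z,y_n) + \phi(y_n,x_n) \le \phi(z,x_n)
\]
for any $z\in\mathrm{zer}T$, which in the excerpt appears as the qualitative statement preceding Lemma \ref{lem:Lem25quant}; it can also be recovered by letting $\varepsilon\to 0$ in the quantified Lemma \ref{lem:Lem25quant}, since for $x=z\in\mathrm{zer}T$ one has $z=J_s z$ for every $s>0$, so the hypothesis $\norm{x-J_sx}\le\omega(\cdot,\cdot)$ is automatic. The second ingredient is the standard convexity inequality
\[
\phi\!\left(z,\, J^{-1}(\alpha Ju+(1-\alpha)Jv)\right) \le \alpha\,\phi(z,u)+(1-\alpha)\,\phi(z,v)
\]
for $\alpha\in[0,1]$ and $u,v\in X$. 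Expanding $\phi$ on the left and using $\norm{J^{-1}w}=\norm{w}_\ast$ together with the convexity of $\norm{\cdot}_\ast^2$ on $X^\ast$ (this is how the inequality is systematically derived in the Kamimura--Kohsaka--Takahashi framework) yields it immediately. Applied with $u=x_n$, $v=y_n$, $\alpha=\alpha_n$ and the iteration $x_{n+1}=J^{-1}(\alpha_n Jx_n+(1-\alpha_n)Jy_n)$, this gives
\[
\phi(z,x_{n+1}) \le \alpha_n\phi(z,x_n)+(1-\alpha_n)\phi(z,y_n).
\]

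Substituting the first ingredient into the second yields the key chained estimate
\[
\phi(z,x_{n+1}) \le \alpha_n\phi(z,x_n)+(1-\alpha_n)\bigl(\phi(z,x_n)-\phi(y_n,x_n)\bigr) = \phi(z,x_n)-(1-\alpha_n)\phi(y_n,x_n).
\]
Since $1-\alpha_n\ge 0$ and $\phi(y_n,x_n)\ge 0$, the first assertion $\phi(z,x_{n+1})\le\phi(z,x_n)$ follows immediately; rearranging the same estimate (using $\alpha_n<1$, as is implicit) gives the second assertion
\[
\phi(y_n,x_n)\le\frac{\phi(z,x_n)-\phi(z,x_{n+1})}{1-\alpha_n}.
\]
I do not foresee any genuine obstacle here: both ingredients are classical, and the derivation is essentially algebraic. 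The only mildly delicate point is the convexity inequality for $\phi$, but this is a direct consequence of the convexity of $\norm{\cdot}_\ast^2$ and the duality identity $\norm{J^{-1}w}=\norm{w}_\ast$ valid in smooth Banach spaces, which is precisely the ambient setting.
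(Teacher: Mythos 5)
Your proof is correct and follows exactly the route of the cited source: combine the convexity inequality $\phi\bigl(z,J^{-1}(\alpha_n Jx_n+(1-\alpha_n)Jy_n)\bigr)\le\alpha_n\phi(z,x_n)+(1-\alpha_n)\phi(z,y_n)$ (from convexity of $\norm{\cdot}_\ast^2$ via the duality identity $\norm{J^{-1}w}=\norm{w}_\ast$) with the Kamimura--Takahashi estimate $\phi(z,J_{r_n}x_n)+\phi(J_{r_n}x_n,x_n)\le\phi(z,x_n)$, then rearrange. The paper does not reprove this lemma (it cites it directly from \cite{KKT2004}), but the subsequent quantitative lemma opens with precisely your first step, referencing ``the proof of Theorem 3.1 in \cite{KKT2004}'', so your argument is the intended one.
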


\begin{lemma}\label{lem:liminfratepre}
Let $z\in\mathrm{zer}T$, let $\bar{\alpha}$ be such that $1>\bar{\alpha}>\alpha_n$ for all $n$ and let $\bar{r}$ be such that $r_n\geq\bar{r}>0$ for all $n$. Let $b\geq\phi(z,x_0)$. Then
\[
\forall \varepsilon\exists n\leq 2\ceil*{\frac{b}{\varepsilon(1-\bar{\alpha})}} \left( \phi(J_{r_{2n}}x_{2n},x_{2n})\leq\varepsilon\right).
\]
\end{lemma}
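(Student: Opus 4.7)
The plan is to combine the two lemmas that immediately precede the statement. First, I would set $a_n := \phi(z,x_n)$. By the lemma attributed to \cite{KKT2004}, the sequence $(a_n)$ is monotonically nonincreasing, so in particular $a_n \leq a_0 \leq b$ for all $n$, and thus $(a_n) \subseteq [0,b]$.

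Next, I would apply the preceding abstract lemma on monotone sequences: for any $\varepsilon' > 0$ there exists $n \leq 2\ceil*{b/\varepsilon'}$ such that $a_{2n} - a_{2n+1} \leq \varepsilon'$. The natural choice is to take $\varepsilon' = \varepsilon(1-\bar{\alpha})$, which yields the announced bound $2\ceil*{b/(\varepsilon(1-\bar\alpha))}$ on $n$.

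For this $n$, I would then invoke the second inequality from the \cite{KKT2004} lemma, namely
\[
\phi(y_{2n},x_{2n}) \leq \frac{\phi(z,x_{2n}) - \phi(z,x_{2n+1})}{1-\alpha_{2n}} = \frac{a_{2n}-a_{2n+1}}{1-\alpha_{2n}}.
\]
Using $1-\alpha_{2n} > 1-\bar{\alpha} > 0$ (since $\bar\alpha > \alpha_k$ for all $k$), this is bounded by $\varepsilon'/(1-\bar\alpha) = \varepsilon$. Recalling that $y_{2n} = J_{r_{2n}} x_{2n}$ (as used implicitly in the cited asymptotic regularity argument of the proof of Theorem 3.1 in \cite{KKT2004}), this gives the desired inequality $\phi(J_{r_{2n}}x_{2n},x_{2n}) \leq \varepsilon$.

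No real obstacle is expected here; the proof is essentially a direct chaining of the two previous lemmas, with the only bookkeeping point being the correct choice of $\varepsilon' = \varepsilon(1-\bar\alpha)$ to match the stated quantitative bound. The mild subtlety is ensuring $a_n \leq b$ uniformly, which follows at once from monotonicity and the choice $b \geq \phi(z,x_0)$.
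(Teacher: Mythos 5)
Your proof is correct and is precisely what the paper means by ``The claim follows from the previous lemmas'' — you have identified the intended chaining: monotonicity of $a_n=\phi(z,x_n)$ gives $(a_n)\subseteq[0,b]$, the counting lemma applied with $\varepsilon'=\varepsilon(1-\bar\alpha)$ produces the index bound, and the second inequality from the \cite{KKT2004} lemma together with $1-\alpha_{2n}>1-\bar\alpha$ yields $\phi(y_{2n},x_{2n})\leq\varepsilon$, where $y_{2n}=J_{r_{2n}}x_{2n}$ in that paper's notation.
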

\begin{proof}
The claim follows from the previous lemmas.
\end{proof}

\begin{lemma}
Let $z\in\mathrm{zer}T$, let $\bar{\alpha}$ be such that $1>\bar{\alpha}>\alpha_n$ for all $n$ and let $\bar{r}$ be such that $r_n\geq\bar{r}>0$ for all $n$. Let $b\geq\phi(z,x_0)$. Then
\[
\forall k\in\mathbb{N}\exists n\leq\Phi(k) \left( x_{2n}\in AF_k\right)
\]
with
\[
\Phi(k)=2\ceil*{\frac{(\lambda(\omega(2E^1_k(\lambda(k))+1,M+1))+1)b}{1-\bar{\alpha}}}
\]
as well as 
\[
E^1_k\geq \max\{2(\mu(\tilde{r}_k,M)+M),2\tilde{r}_k(\bar{r}^{-1}\mu(\tilde{r}_k,M)+\max\{(1+C)(\bar{r}^{-1}M+D)+C,\bar{r}^{-1}\}\}
\]
where $\tilde{r}_k=\max\{r_i\mid i\leq k\}$.
\end{lemma}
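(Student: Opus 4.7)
The plan is to combine the descent estimate from Lemma \ref{lem:liminfratepre}, which only controls $\phi(J_{r_{2n}}x_{2n},x_{2n})$ along the specific resolvent indexed by $2n$, with the quantitative ``near-firm-nonexpansiveness'' estimate in Lemma \ref{lem:Lem25quant} / Remark \ref{rem:Lem25quant} that transfers approximate fixedness from one resolvent $J_s$ to any other resolvent $J_r$. Since membership in $AF_k$ demands control of $\norm{x_{2n}-J_{r_i}x_{2n}}$ for \emph{all} $i\leq k$ simultaneously, I will invoke this transfer $k+1$ times, flanked by the moduli of consistency $\lambda$ used to pass between the norm and the distance $\phi$.

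Concretely, set
\[
\varepsilon:=\frac{1}{\lambda(\omega(2E^1_k\lambda(k)+1,M+1))+1}
\]
and apply Lemma \ref{lem:liminfratepre} to extract some $n$ with
\[
2n\leq 2\ceil*{\frac{b}{\varepsilon(1-\bar{\alpha})}}=\Phi(k)
\]
such that $\phi(J_{r_{2n}}x_{2n},x_{2n})\leq\varepsilon$. The consistency modulus $\lambda$ then yields
\[
\norm{x_{2n}-J_{r_{2n}}x_{2n}}\leq\frac{1}{\omega(2E^1_k\lambda(k)+1,M+1)+1}.
\]
For each $i\leq k$ I invoke Remark \ref{rem:Lem25quant} with $x=y=x_{2n}$, $r=r_i$, $s=r_{2n}$ and $B=M$. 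Since $\phi(x_{2n},x_{2n})=0$, the remark delivers $\phi(J_{r_i}x_{2n},x_{2n})\leq 1/(\lambda(k)+1)$, and a second application of the consistency modulus $\lambda$ converts this into $\norm{x_{2n}-J_{r_i}x_{2n}}\leq 1/(k+1)$. As $i\leq k$ was arbitrary, $x_{2n}\in AF_k$, with the bound $2n\leq\Phi(k)$ already secured in the first step.

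The step requiring genuine care is verifying that the stated $E^1_k$ is an admissible choice of the constant $E$ demanded by Remark \ref{rem:Lem25quant}, i.e.\ that
\[
E^1_k\geq\max\{2(\mu(r,M)+M),\,2rs^{-1}(\mu(r,M)+\mu(s,M))\}
\]
holds simultaneously for every $r=r_i$ with $i\leq k$ and for the a priori unknown $s=r_{2n}$. The estimate $r_i\leq\tilde{r}_k$ together with the monotonicity of $\mu(\cdot,M)$ in its first argument handles the first term as well as the subterm $2rs^{-1}\mu(r,M)$, using also $s^{-1}\leq\bar{r}^{-1}$. The delicate contribution is $2rs^{-1}\mu(s,M)$: since $(r_n)$ is only assumed bounded below by $\bar{r}$ and may well be unbounded above, $\mu(s,M)$ by itself is \emph{not} uniformly controlled in $s$. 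The key observation is that
\[
s^{-1}\mu(s,M)=\max\{(1+C)(s^{-1}M+D)+s^{-1}C,\,s^{-1}\}
\]
is nonincreasing in $s$, hence uniformly bounded for $s\geq\bar{r}$ by the constant $\max\{(1+C)(\bar{r}^{-1}M+D)+C,\bar{r}^{-1}\}$ appearing inside the second summand of $E^1_k$. This is precisely the role of that awkward-looking term, and it is what makes $E^1_k$ a legitimate bound independent of $n$ and, in particular, of the unknown $r_{2n}$. Everything else in the argument is a matter of bookkeeping the two applications of $\lambda$ against the hypothesis of Remark \ref{rem:Lem25quant}.
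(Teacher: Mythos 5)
Your proof takes essentially the same route as the paper's: apply Lemma \ref{lem:liminfratepre} with the carefully chosen $\varepsilon$ to land within $\Phi(k)$, use the consistency modulus $\lambda$ to pass from $\phi$ to norm, invoke the quantitative transfer Lemma \ref{lem:Lem25quant} / Remark \ref{rem:Lem25quant} with $x=y=x_{2n}$, $r=r_i$, $s=r_{2n}$ for each $i\le k$, pass back via $\lambda$, and verify that $E^1_k$ majorizes the required constant $E$ uniformly in the unknown $s=r_{2n}$. Your explanation of why $s^{-1}\mu(s,M)$ is the quantity that must be controlled uniformly in $s\ge\bar r$ — and that this is exactly the role of the $\max\{(1+C)(\bar r^{-1}M+D)+C,\bar r^{-1}\}$ term — is a correct and genuinely illuminating piece of bookkeeping that the paper leaves entirely implicit (it just displays the inequality chain for $E^1_k$ without comment). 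Two cosmetic caveats: you wrote $2n\le\Phi(k)$ where Lemma \ref{lem:liminfratepre} actually yields $n\le\Phi(k)$ (which is what the statement requires, so no harm done), and the monotone bound for $s^{-1}\mu(s,M)$ at $s=\bar r$ is $\max\{(1+C)(\bar r^{-1}M+D)+\bar r^{-1}C,\bar r^{-1}\}$, which only collapses to the stated $\max\{(1+C)(\bar r^{-1}M+D)+C,\bar r^{-1}\}$ when $\bar r\ge 1$; this last imprecision, however, is already present in the paper's own $E^1_k$ formula and verification chain, so you are not introducing an error but faithfully reproducing one.
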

\begin{proof}
By Lemma \ref{lem:liminfratepre} and the properties of $\lambda$, we have
\[
\forall k\in\mathbb{N}\exists n\leq\Phi(k) \left( \norm{J_{r_{2n}}x_{2n}-x_{2n}}\leq\frac{1}{\omega(2E^1_k(\lambda(k))+1,M+1)+1}\right).
\]
By Lemma \ref{lem:Lem25quant}, we then in particular obtain that
\begin{align*}
\phi(x_{2n},J_{r_i}x_{2n})&\leq\phi(x_{2n},x_{2n})+\frac{1}{\lambda(k)+1}\\
&= \frac{1}{\lambda(k)+1}
\end{align*}
for all $i\leq k$. This in particular holds since
\begin{align*}
E^1_k &\geq \max\{2(\mu(\tilde{r}_k,M)+M),2\tilde{r}_k(\bar{r}^{-1}\mu(\tilde{r}_k,M)+\max\{(1+C)(\bar{r}^{-1}M+D)+C,\bar{r}^{-1}\}\}\\
&\geq \max\{2(\mu(\tilde{r}_k,M)+M),2\tilde{r}_k(\bar{r}^{-1}\mu(\tilde{r}_k,M)+r_{2n}^{-1}\max\{(1+C)(M+r_{2n}D)+C,1\}\}\\
&\geq \max\{2(\mu(\tilde{r}_k,M)+M),2\tilde{r}_k(\bar{r}^{-1}\mu(\tilde{r}_k,M)+r_{2n}^{-1}\mu(r_{2n},M))\}\\
&\geq \max\{2(\mu(\tilde{r}_k,M)+M),2\tilde{r}_kr_{2n}^{-1}(\mu(\tilde{r}_k,M)+\mu(r_{2n},M))\}
\end{align*}
where $\tilde{r}_k=\max\{r_i\mid i\leq k\}$. Thus $\norm{x_{2n}-J_{r_i}x_{2n}}\leq 1/(k+1)$ for all $i\leq k$.
\end{proof}

The combination of the previous modulus of uniform Fej\'er monotonicity with the above approximate $F$-point bound then yields the following rate of metastability as an application of Theorem \ref{thm:metaSingleDist}.

\begin{theorem}
Let $X$ be a uniformly convex and uniformly smooth Banach space and let $\eta:(0,2]\to (0,1]$ be a non-decreasing modulus of uniform convexity for $X$. Assume that $X_0$ as defined before is totally bounded with a modulus of total boundedness $\gamma$. Let $z\in\mathrm{zer}T$, let $\bar{\alpha}$ be such that $1>\bar{\alpha}>\alpha_n$ for all $n$ and let $\bar{r}$ be such that $r_n\geq\bar{r}>0$ for all $n$. Let $b\geq\phi(z,x_0)$. Let $d\in Tc$ with $C\geq\norm{c}$, $D\geq\norm{d}$ and let $\omega:\mathbb{N}\times\mathbb{N}\to\mathbb{N}$ be a modulus of uniform continuity of $J$. Then $(x_n)$ is Cauchy and for all $k\in\mathbb{N}$ and all $g:\mathbb{N}\to\mathbb{N}$:
\[
\exists N\leq\tilde{\Psi}(\lambda(k),g)\forall i,j\in[N;N+g(N)]\left( \norm{x_i-x_j}\leq\frac{1}{k+1}\text{ and }x_i\in AF_k\right)
\]
with $\tilde{\Psi}(k,g)=\Psi(k_0,g)$ where
\[
k_0=\max\left\{k,\lambda(\omega_F(k))\right\}
\]
where $\Psi(k,g)=2\Psi_0(P,k,g)$ with $P=\gamma(\Lambda(16\theta(k)+15))$ and $\Psi_0$ defined by
\[
\begin{cases}
\Psi_0(0,k,g)=0,\\
\Psi_0(n+1,k,g)=\Phi^M(\eta^M_k(\Psi_0(n,k,g),2\theta(k)+1)),
\end{cases}
\]
where
\begin{gather*}
\Lambda(k)=\ceil*{\frac{k+1}{16L^{-1}(M+1)}}\dotdiv 1,\\
\lambda(k)=\ceil*{\frac{L}{(M+1)^2}\left(\eta\left(\frac{1}{4(M+1)(k+1)}\right)\right)^{-1}}\dotdiv 1,\\
\theta(k)=\ceil*{\frac{L}{(M+1)^2}\left(\eta\left(\frac{1}{8(M+1)(k+1)}\right)\right)^{-1}}\dotdiv 1,
\end{gather*}
as well as
\[
\Phi(k)=2\ceil*{\frac{(\lambda(\omega(2E^1_k(\lambda(k))+1,M+1))+1)b}{1-\bar{\alpha}}}
\]
and $\Phi^M(k)=\max\{\Phi(j)\mid j\leq k\}$ and where
\[
\eta_k(n,r)=\max\{\chi'_{k,g}(n,r),\zeta_g(n,r),\delta(n,r)\}
\]
with
\begin{gather*}
\chi(n,m,r)=\max\{n,\omega(2E^0_{n,m}((r+1)(m\dotdiv 1)\dotdiv 1+1)+1,M+1)\},\\
\chi_k(n,m,r)=\max\{\delta_F(k),\chi(n,m,r)\},\\
\chi'_k(n,m,r)=\chi_k(2n,2m,r),\\
\zeta(n,m,r)=\chi(2n,2m+1,r),
\end{gather*}
as well as
\begin{gather*}
\delta(n,r)=\chi\left(f(n),n-f(n),4r+3\right)\},\\
\chi'_{k,g}(n,r)=\chi'_k\left(n,\floor*{\frac{g(2n)}{2}},r\right),\\
\zeta_g(n,r)=\zeta\left(n,\floor*{\frac{g(2n)}{2}},r\right).
\end{gather*}
for
\begin{gather*}
E^0_{n,m}\geq \max\{2(\mu(\hat{r}_{n,m},M)+M),2\hat{r}_{n,m}r_n^{-1}(\mu(\hat{r}_{n,m},M)+\mu(r_n,M))\},\\
E^1_k\geq \max\{2(\mu(\tilde{r}_k,M)+M),2\tilde{r}_k(\bar{r}^{-1}\mu(\tilde{r}_k,M)+\max\{(1+C)(\bar{r}^{-1}M+D)+C,\bar{r}^{-1}\}\},
\end{gather*}
where $\tilde{r}_k=\max\{r_i\mid i\leq k\}$ and $\hat{r}_{n,m}=\max\{r_i\mid i\leq n+m\dotdiv  1\}$ as well as 
\[
\mu(R,b)=\max\{(1+C)(b+RD)+C,1\}.
\]
\end{theorem}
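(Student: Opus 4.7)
The plan is to apply Theorem \ref{thm:metaSingleDist} in its simplified form for a single distance $\phi_n=\phi$ and vanishing errors (the primed assumptions (1)', (3)', (5)'), with $\psi=\phi$ and $A=\mathrm{id}$ so that $\phi\preceq^{\mathrm{id}}\phi$ trivially. Since $\phi$ is uniformly triangular with modulus $\theta$ (from Lemma \ref{lem:phimoduli} and the derivation immediately following it), the triangular branch of that theorem yields the $d$-Cauchy conclusion directly, with the same rate as for the $\phi$-Cauchy version. The argument $\lambda(k)$ supplied to $\tilde{\Psi}$ in the statement reflects the need to also exhibit $x_i\in AF_k$, which passes through the $\phi$-closedness of $F$ obtained via the consistency modulus $\lambda$.

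I would first assemble the required moduli. A modulus of total $\phi$-boundedness of $X_0$ is obtained from the metric modulus $\gamma$ by the conversion $\gamma'(k)=\gamma(\Lambda(k))$, explaining the shape of $P=\gamma(\Lambda(\cdot))$ in the statement. A modulus of uniform $\phi$-closedness of $F$ w.r.t.\ $(AF_k)$ is obtained from the metric closedness moduli $\omega_F,\delta_F$ via $\omega'_F(k)=\lambda(\omega_F(k))$ and $\delta'_F(k)=\delta_F(k)$; the metric closedness itself is a routine consequence of the uniform continuity of $J$ (given by $\omega$) together with the explicit form of $AF_k$. This translation accounts for the appearance of $k_0=\max\{k,\lambda(\omega_F(k))\}$ in the definition of $\tilde{\Psi}$.

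Next, I would convert the full uniform $\phi$-Fej\'er monotonicity of $(x_n)$, as witnessed by the modulus $\chi$ established in the preceding lemma, into the partial form required by the general theorem. Applying the proposition that passes from uniform Fej\'er monotonicity to its partial variant, with $f=\mathrm{id}$, gives $\chi'(n,m,r):=\chi(2n,2m,r)$ as a modulus of uniform $\phi$-Fej\'er monotonicity for $(x_{2n})$ and $\zeta(n,m,r):=\chi(2n,2m+1,r)$ as a modulus of uniform $\phi$-$\mathrm{id}$-monotonicity for the full sequence. The approximate $F$-point bound $\Phi$ obtained from the asymptotic regularity lemma serves directly for $(x_{2f(n)})=(x_{2n})$ since $f=\mathrm{id}$.

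Finally, I would substitute $G=H=\alpha_G=\beta_H=\mathrm{id}$ and $A=\mathrm{id}$ into the expressions for $\Psi,\Psi_0,\eta$ of Theorem \ref{thm:metaSingleDist} in its primed variant and simplify. The combined inner modulus $\eta_k$ arises as the maximum of $\chi'_{k,g}$, $\zeta_g$, and the cross-term $\delta(n,r)=\chi(f(n),n-f(n),2r+3)$ coming straight from the general construction, and the combined $\tilde{\theta}(k)=\min\{\theta(k),\omega'_F(k)\}$ collapses appropriately inside $P$ once we restrict the input to $\lambda(k)$. The only non-routine step is the bookkeeping: verifying that the numerical constants in $P$, the $\Phi$-application in $\Psi_0$, and the $\delta_F$-coupling inside $\eta_k$ match the explicit formulas, but this is a mechanical check once the simplifications $G=H=\mathrm{id}$ are applied, so no new idea is required beyond tracing the general construction.
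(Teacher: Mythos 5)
Your plan — apply Theorem \ref{thm:metaSingleDist} in its single-$\phi$, errors-free primed form with $\psi=\phi$ and $A=\mathrm{id}$, convert the metric moduli $\gamma$, $\omega_F$, $\delta_F$ into their $\phi$-relative counterparts via $\Lambda,\lambda$, derive the partial Fej\'er data from the global modulus $\chi$ with $f=\mathrm{id}$, and instantiate $G=H=\alpha_G=\beta_H=\mathrm{id}$ — is the route the paper intends (it gives no explicit proof but the two lemmas immediately preceding the theorem supply exactly $\chi$ and $\Phi$, and the surrounding Propositions on $\chi'(n,m,r)=\chi(2n,2m,r)$ and $\zeta(n,m,r)=\chi(2n,2m+1,r)$ exist precisely for this substitution).

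Two things are worth being more careful about. First, the closedness moduli $\omega_F,\delta_F$ appear in the final rate (inside $k_0$ and inside $\chi_k$) but are never constructed; you label this ``a routine consequence of the uniform continuity of $J$,'' but uniform continuity of $J$ alone does not give uniform continuity of the resolvents $J_{r_n}$ in a Banach space. What one actually needs is the monotonicity inequality $\langle J_r x-J_r y,\,Jx-Jy\rangle\geq\langle J_r x-J_r y,\,JJ_r x-JJ_r y\rangle$ combined with the modulus of uniform convexity $\eta$ (via the lower bound $\phi(u,v)\geq 2L^{-1}b^2\delta_X(\norm{u-v}/4b)$ from Alber's inequalities) to conclude that $J_{r}$ is uniformly continuous on bounded sets; only then does the map $x\mapsto\norm{x-J_{r_n}x}$ become uniformly continuous and $\omega_F,\delta_F$ become available. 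This is a genuine step, not bookkeeping, and its dependence on $\eta$, not just $\omega$, should be made explicit.

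Second, the role of $\lambda(k)$ as the argument to $\tilde{\Psi}$ deserves a cleaner account. Your sketch attributes it to closedness, but the natural explanation is that the abstract theorem with $\psi=\phi$ and the weak-triangularity branch returns $\phi(x_i,x_j)\leq 1/(\lambda(k)+1)$ and $x_i\in AF_{\lambda(k)}$; both are then downgraded to $d(x_i,x_j)\leq 1/(k+1)$ via the consistency modulus $\lambda$ and to $x_i\in AF_k$ via $\lambda(k)\geq k$ and monotonicity of $(AF_j)$. Relatedly, a literal substitution of $\alpha_G=\beta_H=A=\mathrm{id}$ into the abstract formulas produces arguments like $4\theta(k)+3$ rather than the $2\theta(k)+1$ and $\max\{2\theta(k)+1,4\theta(k)+6\}$ written in the statement, so the ``mechanical check'' you defer is not entirely cosmetic: either some additional numerical simplification specific to $G=H=\mathrm{id}$ is being exploited, or the constants in the statement are not a verbatim image of the abstract rate, and this gap should be closed rather than waved off.
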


If we have a modulus of regularity for the operator in the sense of the following definition, then an application of Theorem \ref{thm:modregthm} even allows for the derivation of a rate of convergence.

\begin{definition}
Let $S\subseteq X$ be a set. A function $\rho:\mathbb{R}^*_+\to\mathbb{R}^*_+$ is a \emph{modulus of regularity for the (generalized) proximal point algorithm w.r.t.\ $S$} if for all $\varepsilon>0$ and $x\in S$:
\[
\norm{x-J_1x} <\rho(\varepsilon)\text{ implies }\mathrm{dist}(x,\mathrm{Fix}\,J_1)<\varepsilon.
\]
\end{definition}

\begin{theorem}
Let $X$ be a uniformly convex and uniformly smooth Banach space and let $\eta:(0,2]\to (0,1]$ be a modulus of uniform convexity for $X$. Let $\omega(\varepsilon,b)\leq\varepsilon$ be a modulus of uniform continuity of $J$ on bounded subsets, i.e.
\[
\forall z,w\in X,b\in\mathbb{N},\varepsilon>0\left(\norm{z},\norm{w}\leq b\land 
\norm{z-w}\leq\omega(\varepsilon,b)\to \norm{Jz-Jw}\leq\varepsilon\right).
\]
Let $X_0$ be defined as before. Let $z\in\mathrm{zer}T$ and let $b\geq \phi(z,x_0)$. Let $\bar{\alpha}$ be such that $1>\bar{\alpha}>\alpha_n$ for all $n$ and let $\bar{r}$ be such that $r_n\geq\bar{r}>0$ for all $n$. Let $\rho$ be a modulus of regularity for the proximal point algorithm w.r.t.\ $X_0$. Then $(x_n)$ is Cauchy and
\[
\forall \delta>0\forall n,m\geq\mu(\delta)(\norm{x_n-x_m}<\delta)
\]
with 
\[
\mu(\delta)=2\tau\left(\rho'\left(\frac{ \theta(\lambda(\delta))}{2}\right)\right)
\]
and where
\begin{gather*}
\tilde{\Lambda}(\varepsilon)=\frac{\varepsilon}{16L^{-1}(M+1)},\\
\tilde{\lambda}(\varepsilon)=\frac{(M+1)^2}{L}\eta\left(\frac{\varepsilon}{4b(M+1)}\right),\\
\theta(\varepsilon)=\tilde{\lambda}\left(\varepsilon/2\right),\\
\rho'(\varepsilon)=\rho(\tilde{\Lambda}(\varepsilon/2)).
\end{gather*}
and where
\[
\tau(\varepsilon)=2\ceil*{\frac{b}{\tilde{\lambda}\left(\omega\left(\frac{\tilde{\lambda}(\varepsilon)}{2E},M\right)\right)(1-\bar{\alpha})}}
\]
with
\[
E\geq \max\{2(\mu(1,M)+M),2(\bar{r}^{-1}\mu(1,M)+\max\{(1+C)(\bar{r}^{-1}M+D)+C,\bar{r}^{-1}\}\}
\]
as well as $\mu(R,b)=\max\{(1+C)(b+RD)+C,1\}$.
\end{theorem}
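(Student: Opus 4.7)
The plan is to instantiate Theorem~\ref{thm:modregthm} in its specialization to constant sequences of distance functions ($\phi_n\equiv\phi$) with no errors ($\varepsilon_n=0$) and $f=\mathrm{id}$, with the Bregman-like distance $\phi(x,y)=\|x\|^2-2\langle x,Jy\rangle+\|y\|^2$ playing the role of both $\psi$ and $\phi_n$, and with $G=H=\mathrm{id}$, $A=\mathrm{id}$. The triangularity of $\phi$ with modulus $\theta(\varepsilon)=\tilde{\lambda}(\varepsilon/2)$, derived from the consistency moduli $\tilde{\lambda},\tilde{\Lambda}$ of Lemma~\ref{lem:phimoduli}, ensures that the resulting Cauchy property is with respect to the norm. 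The needed modulus of uniform $\phi$-Fej\'er monotonicity for $(x_n)$ w.r.t.\ $F$ and $(AF_k)$ arises, as in the preceding application, from the proof of Theorem~3.1 of \cite{KKT2004}---which implicitly establishes $\phi(z,x_{n+1})\leq\alpha_n\phi(z,x_n)+(1-\alpha_n)\phi(z,J_{r_n}x_n)$ for $z\in\mathrm{zer}T$---by substituting Lemma~\ref{lem:Lem25quant} at approximate zeros $z\in AF_k$ in place of the non-quantitative identity of \cite{KT2004}.

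The central new ingredient is the construction of the approximate-zero rate $\tau$. Since the function $F$ measuring approximateness to $\mathrm{Fix}\,J_1=\mathrm{zer}\,T$ is $F(x)=\|x-J_1 x\|$, we need $\forall \varepsilon>0\,\exists N\leq\tau(\varepsilon)\,\|x_{2N}-J_1 x_{2N}\|<\varepsilon$. This is obtained by composing three steps: Lemma~\ref{lem:liminfratepre} produces an $n$ of size $2\lceil b/(\varepsilon'(1-\bar{\alpha}))\rceil$ satisfying $\phi(J_{r_{2n}}x_{2n},x_{2n})\leq\varepsilon'$; the consistency modulus $\tilde{\lambda}$ then converts this into a norm bound on $\|x_{2n}-J_{r_{2n}}x_{2n}\|$; finally, Lemma~\ref{lem:Lem25quant} applied with $x=y=x_{2n}$, $s=r_{2n}\geq\bar{r}$ and $r=1$ transfers approximate $J_{r_{2n}}$-fixedness to approximate $J_1$-fixedness, and a further use of $\tilde{\lambda}$ returns to the norm side. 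The majorant $E$ in the statement is chosen precisely so that the resolvent bounds of Lemma~\ref{lem:resMaj} dominate uniformly for $r=1$ and for all $s=r_{2n}\geq\bar{r}$ at once, which accounts for its displayed lower bound; composing the three steps then produces exactly the stated formula $\tau(\varepsilon)=2\lceil b/(\tilde{\lambda}(\omega(\tilde{\lambda}(\varepsilon)/(2E),M))(1-\bar{\alpha}))\rceil$.

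Finally, the metric modulus of regularity $\rho$ is upgraded to a $\phi$-modulus of regularity using the uniform consistency of $\phi$: if $\|x-J_1x\|<\rho(\tilde{\Lambda}(\varepsilon/2))$, then $\mathrm{dist}(x,\mathrm{Fix}\,J_1)<\tilde{\Lambda}(\varepsilon/2)$ and hence $\mathrm{dist}_\phi(x,\mathrm{Fix}\,J_1)<\varepsilon$, so $\rho'(\varepsilon)=\rho(\tilde{\Lambda}(\varepsilon/2))$ works. Plugging the three ingredients into Theorem~\ref{thm:modregthm} and simplifying the identity moduli for $\alpha_G,\beta_H,A$ yields the claimed rate $\mu(\delta)=2\tau(\rho'(\theta(\lambda(\delta))/2))$, where the inner $\lambda(\delta)$ arises from a final application of the consistency modulus to convert the intrinsic $\phi$-Cauchy statement furnished by the abstract theorem into the desired norm-Cauchy statement. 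The main obstacle is the construction of $\tau$ above: the nested chain $\tilde{\lambda}(\omega(\tilde{\lambda}(\cdot)/(2E),M))$ arises naturally from the compose-Lemma~\ref{lem:Lem25quant}-then-apply-consistency pattern, but identifying the right majorant $E$ uniformly over $s\in[\bar{r},\infty)$ simultaneously with $r=1$ requires careful bookkeeping of the resolvent bounds. By Remark~\ref{rem:noComp}, strong convergence of $(x_n)$ in the norm then follows from the resulting rate without any additional compactness hypothesis on $X_0$.
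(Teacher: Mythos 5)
Your proposal follows essentially the same route as the paper's own proof: construct $\tau$ from Lemma~\ref{lem:liminfratepre}, the consistency modulus $\tilde{\lambda}$, and Lemma~\ref{lem:Lem25quant} (with $x=y=x_{2n}$, $s=r_{2n}$, $r=1$), verify the majorant $E$ using Lemma~\ref{lem:resMaj} uniformly over $s\geq\bar{r}$ with $r=1$, lift $\rho$ to $\rho'$ via $\Lambda$, and then invoke the single-$\phi$, zero-error case of the abstract rate-of-convergence theorem. The paper's proof is terser (it checks only $\tau$ and the $E$-chain and then cites the abstract theorem), whereas you also spell out the choices of $G,H,A,f,\psi$, which the paper leaves implicit.

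One small correction of emphasis: the abstract rate-of-convergence theorem you instantiate only assumes the non-quantitative partial $(\phi_n)$-$(G,H)$-quasi-Fej\'er monotonicity property (which is exactly the paper's earlier qualitative lemma $\phi(z,x_{n+1})\leq\phi(z,x_n)$ for $z\in\mathrm{zer}T$), not a modulus of uniform $\phi$-Fej\'er monotonicity; the latter is needed only for the metastability result. Your argument still goes through, since possessing the modulus implies the non-quantitative property, but invoking the modulus here is superfluous and slightly misidentifies the hypothesis being used.
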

\begin{proof}
By Lemma \ref{lem:liminfratepre}, we have 
\[
\forall \varepsilon\exists n\leq 2\ceil*{\frac{b}{\varepsilon(1-\bar{\alpha})}} \left( \phi(J_{r_{2n}}x_{2n},x_{2n})\leq\varepsilon\right).
\]
Thus, we get that there exists an $n\leq\tau(\varepsilon)$ such that
\[
\norm{x_{2n}-J_{r_{2n}}x_{2n}}\leq\omega\left(\frac{\tilde{\lambda}(\varepsilon)}{2E},M\right).
\]
By Lemma \ref{lem:Lem25quant}, we have
\[
\phi(x_{2n},J_1x_{2n})\leq\phi(x_{2n},x_{2n})+\tilde{\lambda}(\varepsilon)=\tilde{\lambda}(\varepsilon)
\]
and we thus have $\norm{x_{2n}-J_1x_{2n}}\leq\varepsilon$. For this, note that
\begin{align*}
E&\geq \max\{2(\mu(1,M)+M),2(\bar{r}^{-1}\mu(1,M)+\max\{(1+C)(\bar{r}^{-1}M+D)+C,\bar{r}^{-1}\}\}\\
&\geq \max\{2(\mu(1,M)+M),2(\bar{r}^{-1}\mu(1,M)+r_{2n}^{-1}\max\{(1+C)(M+r_{2n}D)+C,1\}\}\\
&\geq \max\{2(\mu(1,M)+M),2(\bar{r}^{-1}\mu(1,M)+r_{2n}^{-1}\mu(r_{2n},M))\}\\
&\geq \max\{2(\mu(1,\norm{x_{2n}})+\norm{x_{2n}}),2 r_{2n}^{-1}(\mu(1,\norm{x_{2n}})+\mu(r_{2n},\norm{x_{2n}}))\}.
\end{align*}
Thus $\tau$ confines to the properties of Theorem \ref{thm:modregthm} and this result consequently implies the present theorem.
\end{proof}

Similar to before, as already discussed in Remark \ref{rem:noComp}, also here the strong convergence in particular holds without any additional compactness assumptions in the context of these moduli of regularity.

\medskip

Again, we refer to \cite{KLAN2019} for the discussion of examples of concrete moduli of regularity for resolvents of monotone operators in Hilbert spaces which can be easily transformed into corresponding moduli of regularity for resolvents of monotone operators in Banach spaces in the above sense.\\

{\bf Acknowledgments}
\noindent I want to thank Ulrich Kohlenbach for suggesting the algorithms presented in \cite{IH2019,MP2015} as interesting case studies as well as for suggesting to consider potential abstract results as presented in the first part of this paper, generalizing the previous works \cite{KLN2018,KLAN2019} to the setting of partial Fej\'er monotonicity. For another, I want to emphasize that Lemmas \ref{lem:MuPeng1819Av}, \ref{lem:MuPengTheorem1iAv} and \ref{lem:modulusUnifFejer} are generalizations of quantitative versions due to him of the analogous results from \cite{MP2015} that he communicated to me in personal correspondence. Further, I want to thank Thomas Powell and Morenikeji Neri for many remarks that led to simplifications in the assumptions of various theorems in this paper. Lastly, I want to thank the anonymous referees for the valuable comments that improved the paper at various places.

{\bf Funding and/or Conflicts of interests}
\noindent This work was supported by the `Deutsche Forschungs\-gemein\-schaft' Project DFG KO 1737/6-2. The author has no relevant financial or non-financial interests to disclose.

\bibliographystyle{plain}
\bibliography{ref}

\end{document}